\theoremstyle{plain}
\newtheorem{theorem}{Theorem}[section]
\newtheorem{corollary}[theorem]{Corollary}
\newtheorem{lemma}[theorem]{Lemma}
\newtheorem{proposition}[theorem]{Proposition}
\theoremstyle{definition}
\newtheorem{definition}[theorem]{Definition}
\newtheorem{notation}[theorem]{Notation}
\theoremstyle{remark}
\newtheorem{remark}[theorem]{Remark}
\newtheorem{example}[theorem]{Example}
\newcommand{\into}{\hookrightarrow}
\newcommand{\Z}{\mathbb{Z}}
\newcommand{\Q}{\mathbb{Q}}
\newcommand{\R}{\mathbb{R}}
\newcommand{\bd}{\partial}
\renewcommand{\H}{\mathbb H}
\newcommand{\mc}[1]{\mathcal{#1}}
\newcommand{\ms}[1]{\mathscr{#1}}
\newcommand{\Hom}{\text{Hom}}
\newcommand{\codim}{\text{codim}}
\newcommand{\xr}{\xrightarrow}
\newcommand{\im}{\text{im}}
\newcommand{\capp}{\smallfrown}
\newcommand{\cC}{{\cal C}}
\newcommand{\dlim}{\mathop{\varinjlim}}
\begin{document}

\title{Cup and cap products in intersection (co)homology}
\author{Greg Friedman\thanks{This work was partially supported by a grant 
from the Simons Foundation (\#209127 to Greg Friedman)}\ \  and James 
McClure\thanks{The second author was partially supported by NSF. He
thanks the Lord for making his work possible.}
}

\date{December 1, 2012}

\maketitle

\begin{abstract}
We construct cup and cap products in intersection (co)homology with field
coefficients. The existence of the cap product allows us to give a new proof
of Poincar\'e duality in intersection (co)homology which is similar in spirit
to the usual proof for ordinary (co)homology of manifolds.
\end{abstract}

\medskip
\textbf{2010 Mathematics Subject Classification:} Primary: 55N33, 55N45; Secondary: 57N80, 55M05

\textbf{Keywords:} intersection homology, intersection cohomology, pseudomanifold, cup product, cap product, Poincar\'e duality

\tableofcontents

\section{Introduction}
Intersection homology, first introduced and studied by Goresky and MacPherson \cite{GM1,GM2}, is a basic tool in the study of singular spaces.  It has
important features in common with ordinary homology (excision, 
Mayer-Vietoris, intersection pairing, Poincar\'e duality with field
coefficients) and important differences (restrictions on functoriality, 
failure of homotopy invariance, restrictions on Poincar\'e duality with integer
coefficients).  An important difference is the fact 
that the Alexander-Whitney map does not induce a map of intersection chains 
(because if a simplex satisfies the relevant allowability condition there is
no reason for its front and back faces to do so).  Because of this, it has 
long been thought that there is no reasonable way to define cup and cap 
products in intersection (co)homology.  In this paper, we use a different 
method to construct cup and cap products (with field coefficients) with the 
usual properties, and we use the cap product to give a new proof of 
Poincar\'e duality for intersection (co)homology with field coefficients.

We give applications and extensions of these results in \cite{GBF30} and 
\cite{GBF31}.  In \cite{GBF30} we show that our Poincar\'e duality isomorphism 
agrees with that obtained by sheaf-theoretic methods in \cite{GM2}
and that our cup product is Poincar\'e dual to the intersection pairing of
\cite{GM2}.  We also prove that the de Rham isomorphism of \cite{BHS} takes 
the wedge product of intersection differential forms to the cup product of 
intersection cochains.  In \cite{GBF31} we give a new construction of the 
symmetric signature for Witt spaces (which responds to a question raised in 
\cite{ALMP-combo}).  

\begin{remark}
It is important to note that the cup product given by our work is not a map
\[
I_{\bar p}H^*(X;F)
\otimes
I_{\bar p}H^*(X;F)
\to
I_{\bar p}H^*(X;F)
\]
for a fixed perversity $\bar p$.  Instead it is a map
\[
I_{\bar p}H^*(X;F)
\otimes
I_{\bar q}H^*(X;F)
\to
I_{\bar r}H^*(X;F)
\]
for triples $\bar p, \bar q,\bar r$ satisfying a certain inequality (see 
Section \ref{S: cup}; this is what one would expect if the cup product is to be
dual to the intersection pairing).  In
particular, there is no relation between our work and the 
observation of Goresky and MacPherson (\cite[Section 6.2]{GM2}) that 
there seems to be no good cup product on middle-perversity intersection
cohomology. 
\end{remark}

Our basic strategy for constructing cup and cap products is to replace the
Alexander-Whitney map 
by a combination of the cross product and the (geometric) diagonal
map.  To illustrate this, we explain how it works 
in ordinary homology.  For a field $F$, the cross product gives an 
isomorphism (where
the tensor is over $F$)
\[
H_*(X;F)\otimes H_*(Y;F)\to H_*(X\times Y;F),
\]
and we can use this isomorphism to construct the algebraic diagonal map 
\[
\bar{d}:H_*(X;F)\to H_*(X;F)\otimes H_*(X;F)
\]
as the composite
\[
H_*(X;F)\xrightarrow{d} H_*(X\times X;F)
\xleftarrow{\cong} H_*(X;F)\otimes H_*(X;F),
\]
where $d$ is the geometric diagonal.  The evaluation map induces an
isomorphism
\[
H^*(X;F)\to \Hom(H_*(X;F),F),
\]
and we 
define the cup product of cohomology classes $\alpha$ and $\beta$ by
\[
(\alpha\cup\beta)(x)=(\alpha\otimes\beta)(\bar{d}(x)),
\quad x\in H_*(X; F).
\]
The fact that the cup product is associative,
commutative, and unital follows easily from the corresponding properties of the
cross product.  Similarly, we define the cap product by
\[
\alpha\cap x=(1 \otimes \alpha)(\bar d(x)).
\]

\bigskip

In order to carry out the analogous constructions in intersection homology, we 
need to know that the cross product gives an isomorphism on intersection 
homology (with suitable perversities) and that the geometric diagonal map 
induces a map of intersection homology (with suitable perversities).  The 
first of these facts is Theorem \ref{j1}
and the second is Proposition \ref{L: diag}.

\bigskip

Here is an outline of the paper.  In Section \ref{S: background}, we establish 
terminology and notations for stratified pseudomanifolds and intersection 
homology.  (We allow strata of codimension one and completely general 
perversities, which means that intersection homology is not independent of the 
stratification in general.)  In Section \ref{S: kunneth}, we state the 
K\"unneth theorem for intersection homology, which is the basic tool in our 
work.  In Section \ref{j3}, we construct the algebraic diagonal map, cup 
product, and cap product in intersection (co)homology and show that they have 
the expected properties.  In Section \ref{S: fund}, we show that an 
orientation of an $n$-dimensional stratified pseudomanifold $X$ determines a 
fundamental class in $I^{\bar 0}H_n(X,X-K;R)$ for each compact $K$ and each 
ring $R$.  In Section \ref{j4}, we show that cap product with the fundamental 
class induces a Poincar\'e duality isomorphism
\[
I_{\bar p}H^i_c(X;F)
\to
I^{\bar q}H_{n-i}(X;F)
\]
when $\bar p$ and $\bar q$ are complementary perversities.
In Section \ref{S: boundary}, we extend our results to
stratified pseudomanifolds with boundary. The proofs in Sections \ref{S: fund}
through \ref{S: boundary} follow the general outline of the corresponding proofs in
\cite{Ha}, but the details are more intricate.

\begin{remark}[Signs]
We include a sign in the Poincar\'e duality isomorphism (see \cite[Section 
4.1]{GBF18}).  Except for this we follow the 
signs in \cite{Dold}, which means that we use the Koszul convention 
everywhere except in the definition of the coboundary on cochains (see Remark 
\ref{R: coboundary sign} below).
\end{remark}

\begin{remark}
Recently, Chataur, Saralegi-Aranguren and Tanr\'e
\cite{CST}
have found a different approach to the cup product in intersection cohomology
which leads to an ``intersection'' version of rational homotopy theory.  It
seems likely that their cup product agrees with ours.  They do not construct a
cap product.
\end{remark}


\begin{remark}
\label{l5}
In \cite[Section 7]{Ba10a}, Markus Banagl constructed a cup product 
\[
I_{\bar p}H^*(X;\mathbb Q)
\otimes
I_{\bar p}H^*(X;\mathbb Q)
\to
I_{\bar q}H^*(X;\mathbb Q)
\]
for certain pairs of perversities $\bar p,\bar q$ (namely for classical
perversities satisfying
$\bar{p}(k)+\bar{p}(l)\leq\bar{p}(k+l)\leq\bar{p}(k)+\bar{p}(l)+2$ for all
$k,l$ and $\bar{q}(k)+k\leq \bar{p}(2k)$ for all $k$).  We show in Appendix 
\ref{l6} that this cup product agrees with ours (up to sign) for all such 
pairs $\bar p,\bar q$.  Banagl's construction is similar to ours, except that 
the K\"unneth theorem he uses is the one in \cite{CGJ} (which is a special 
case of that in \cite{GBF20}; see \cite[Corollary 3.6]{GBF20}).  He does not 
consider the cap product.
\end{remark}

\section{Background}\label{S: background}

We begin with a brief review of basic definitions. Subsection \ref{S: pm}
reviews the definition of stratified pseudomanifold. 
Subsection \ref{j5} reviews singular intersection homology with general 
perversities as 
defined in \cite{GBF26, GBF23}.  Other standard sources for more classical 
versions of intersection homology include  \cite{GM1, GM2, Bo, KirWoo, BaIH, 
Ki, GBF10}.

\subsection{Stratified pseudomanifolds}\label{S: pm}

We use the definition of stratified pseudomanifold in \cite{GM2}, except that
we allow strata of codimension one. 
Before giving the definition we need some background.

For a space $W$ we define the {\it open cone} $c(W)$ by $c(W)=([0,1)\times 
W)/(0\times W)$ (we put the $[0,1)$ factor first so that our signs will 
be consistent with the usual definition of the algebraic mapping cone). Note 
that $c(\emptyset)$ is a point.

If $Y$ is a filtered space
$$Y=Y^n\supseteq Y^{n-1}\supseteq \cdots \supseteq 
Y^0\supseteq Y^{-1}=\emptyset,$$ 
we define $c(Y)$ to be the filtered space with
$(c(Y))^i=c(Y^{i-1})$ for $i\geq 0$ and $(c(Y))^{-1}=\emptyset$.

The definition of stratified pseudomanifold is now given by induction on the
dimension. 

\begin{definition}\label{D: pseudomanifold}
A $0$-dimensional stratified pseudomanifold $X$ is a  discrete set of points 
with the trivial filtration $X=X^0\supseteq X^{-1}=\emptyset$.

An $n$-dimensional \emph{(topological) stratified  pseudomanifold}
$X$ is a paracompact Hausdorff space together 
with a filtration by closed subsets

\begin{equation*}
X=X^n\supseteq X^{n-1} \supseteq X^{n-2}\supseteq \cdots \supseteq X^0\supseteq X^{-1}=\emptyset
\end{equation*}
such that
\begin{enumerate}
\item $X-X^{n-1}$ is dense in $X$, and
\item for each point $x\in X^i-X^{i-1}$, there exists a neighborhood
$U$ of $x$ for which there is a  \emph{compact} $n-i-1$ dimensional 
stratified    pseudomanifold  $L$ and a   homeomorphism
\begin{equation*}
\phi: \R^i\times cL\to U
\end{equation*}
that takes $\R^i\times c(L^{j-1})$ onto $X^{i+j}\cap U$. A neighborhood $U$ with
this property is called {\it distinguished} and $L$ is called a {\it link} of
$x$.
\end{enumerate}
\end{definition}

The $X^i$ are called \emph{skeleta}. We write $X_i$ for $X^i-X^{i-1}$; this 
is an $i$-manifold that may be empty. We refer to the connected components of 
the various $X_i$ as  \emph{strata}\footnote{This terminology agrees with some 
sources, but is slightly different from others, including our own past work, 
which would refer to $X_i$ as the stratum and what we call strata as 
``stratum components.''}. If a stratum $Z$ is a subset of $X_n$ it 
is called a \emph{regular stratum}; otherwise it is called a \emph{singular 
stratum}.  The \emph{depth} of a stratified pseudomanifold is the number of 
distinct skeleta it possesses minus one.

We note that this definition of stratified pseudomanifolds is slightly more general than the one in common usage \cite{GM1}, as it is common to assume that $X^{n-1}=X^{n-2}$. We will not make that assumption here, but when we do assume $X^{n-1}=X^{n-2}$, intersection homology with Goresky-MacPherson perversities is known to be a topological invariant; in particular, it is invariant under choice of  stratification (see \cite{GM2}, \cite{Bo}, \cite{Ki}). Examples of pseudomanifolds include irreducible complex algebraic and analytic varieties (see \cite[Section IV]{Bo}).

If $L$ and $L'$ are links of points in the same stratum then there is a stratified  homotopy equivalence between them (see, e.g., \cite{GBF13}), 
and therefore they have the same intersection homology by Appendix \ref{A: sphe}.  Because of this, we
will sometimes refer to ``the link'' of a stratum  instead of ``a link'' of  
a point in the stratum.

\subsection{Singular intersection homology with general perversities.}
\label{j5}

\begin{definition}
Let $X$ be a stratified pseudomanifold.
A  \emph{perversity on $X$} is a
function $\bar p: \{\text{strata of $X$} \}\to \Z$
with $\bar p(Z)=0$ if $Z$ is a regular stratum.   
\end{definition}

This is a much more general definition than that in
\cite{GM1,GM2}; on the rare occasions when we want to refer to perversities as
defined in \cite{GM1,GM2} we will call them ``classical perversities.''

Besides being interesting in their own right, general perversities are required
in our work because of their role in the K\"unneth theorem (Theorem \ref{j1}
below).

In the literature, there are several non-equivalent definitions of 
intersection homology with general perversities.  We will use the version in
\cite{GBF26, GBF23} (which is equivalent to that in \cite{Sa05}).  The reason
for this choice is that it gives the most useful version of the ``cone
formula'' (Proposition \ref{P: cone} below).

As motivation for the definition, recall that the singular chain group 
$S_i(X; G)$ of a space $X$ with coefficients in an abelian group $G$ consists 
of finite sums $\sum g_j\sigma_j$, where each $g_j\in G$ and each $\sigma_j$ 
is a map $\sigma_j:\Delta^i\to X$ of the standard $i$-simplex to $X$. The 
boundary is given by $\bd \sum g_j\sigma_j=\sum g_j\bd \sigma_j=\sum_{j,k} 
(-1)^kg_j\bd_k\sigma_k$. If instead $\mc G$ is a local coefficient system of 
abelian groups, then an element of $S_i(X; \mc G)$ is again a sum $\sum 
g_j\sigma_j$, where now $g_j$ is a lift of $\sigma_j$ to $\mc G$ or, 
equivalently, a section of the coefficient system $\sigma_j^*\mc G$ over 
$\Delta^i$. The boundary map becomes $\bd \sum g_j\sigma_j=\sum_{j,k} 
(-1)^kg_j|_{\bd_k\sigma_j}\bd_k \sigma_j$; in other words, the restriction of 
the ``coefficient'' $g_j$ to the boundary piece $\bd_k\sigma_j$ is the 
restriction of the section over $\Delta^i$ to $\bd_k\Delta^i$. If the system 
$\mc G$ is constant, then we recover $S_i(X; G)$.

If $X$ is a stratified pseudomanifold we make a slight adjustment. Suppose $\mc G$ is a local coefficient system defined on $X-X^{n-1} $. Let
$C_i(X;\mc G)$ again consist of chains $\sum g_j\sigma_j$, where now $g_j$ is a section of 
$(\sigma_j|_{\sigma_j^{-1}(X-X^{n-1})})^*\mc G$  over $\sigma^{-1}_j(X-X^{n-1})$.  Note that if $\sigma_j^{-1}(X-X^{n-1})$ is empty then the sections of $(\sigma_j|_{\sigma_j^{-1}(X-X^{n-1})})^*\mc G$ form the trivial group (because
there's exactly one map from the empty set to any set). The
differential is given by the same formula as in the previous paragraph, with  
restrictions  to boundaries $\bd_k\Delta^i$ being trivial if $\sigma_j$ maps $\bd_k\Delta^i$ into $X^{n-1}$. Even when we have a globally defined coefficient system, such at the constant system $G$, we continue to let\footnotemark $C_i(X; \mc G)$ denote $C_i(X;\mc G|_{X-X^{n-1}})$. 
\footnotetext{In the first-named  author's prior work, this would have been denoted $C_i(X; \mc G_0)$. }

Now given a stratified pseudomanifold $X$, a general perversity $\bar p$, and 
a local coefficient system $\mc G$ on $X-X^{n-1} $, we define the 
\emph{intersection chain complex} $I^{\bar p}C_*(X;\mc G)$ as a subcomplex of 
$C_*(X;\mc G)$ as follows. An $i$-simplex $\sigma:\Delta^i\to X$ in $C_i(X)$ 
is \emph{allowable} if $$\sigma^{-1}(Z)\subset \{i-\codim(Z)+\bar p(Z) \text{ 
skeleton of } \Delta^i\}$$ for any singular stratum  $Z$ of $X$. The chain 
$\xi\in C_i(X;\mc G)$ is allowable if each simplex with non-zero coefficient 
in $\xi$ or in $\bd \xi$ is allowable. $I^{\bar p}C_*(X;\mc G)$ is the 
complex of allowable chains. 

The associated homology theory is denoted $I^{\bar p}H_*(X;\mc G)$ 
and called \emph{intersection homology}.  Relative intersection homology is 
defined similarly. 
 
If $\bar p$ is a perversity in the sense of Goresky-MacPherson \cite{GM1} and 
$X$ has no strata of codimension one, then $I^{\bar p}H_*(X;\mc G)$ is 
isomorphic to the intersection homology groups $I^{\bar p}H_*(X;\mc G)$ of 
Goresky-MacPherson \cite{GM1, GM2}.

Even with general perversities, many of the basic properties of singular
intersection homology established in  \cite{Ki} and \cite{GBF10} hold with 
little or no change to the proofs, such as excision and Mayer-Vietoris sequences. Intersection homology is also invariant under properly formulated stratified versions of homotopy equivalences. Proof of this folk result for Goresky-MacPherson perversities is written down in \cite{GBF3}; the slightly more elaborate details necessary for general perversities are provided below in Appendix \ref{A: sphe}.

Intersection homology with general perversities can also be formulated sheaf 
theoretically; see \cite{GBF23, GBF26} for more details.

\paragraph{Cone formula.}
General perversity intersection homology satisfies the following cone 
formula, which generalizes that in \cite{GM1,GM2} (but it differs from King's 
formula in \cite{Ki}); see \cite[Proposition 2.1]{GBF23} and 
\cite[Proposition 2.18]{GBF10}.  We state it with constant 
coefficients, which is all that we require. 

\begin{proposition}\label{P: cone}
Let $L$ be an $n-1$ dimensional stratified pseudomanifold, and let $G$ be an
abelian group. Let $cL$ be the cone on $L$, with
vertex $v$ and stratified so that $(cL)^0=v$ and $(cL)^i=c(L^{i-1})$ for 
$i>0$. Then
\begin{equation*}
I^{\bar p}H_i(cL;G) \cong
\begin{cases}
0, & i\geq n-1-\bar p(\{v\}),\\
I^{\bar p}H_{i}(L;G), & i<n-1-\bar p(\{v\}),
\end{cases}
\end{equation*}
where the isomorphism in the second case is induced by any inclusion 
$\{t\}\times L\into ([0,1)\times 
L)/(0\times L)=cL$ with $t\neq 0$. 

Therefore, also
\begin{equation*}
I^{\bar p}H_i(cL,L;G)\cong 
\begin{cases}
I^{\bar p}H_{i-1}(L;G), & i\geq n-\bar p(\{v\}),\\
0, & i<n-\bar p(\{v\}).
\end{cases}
\end{equation*}
\end{proposition}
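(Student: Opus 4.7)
The plan is to use the geometric cone operator on singular chains together with the stratified homotopy invariance of Appendix \ref{A: sphe}, with the allowability condition at the vertex stratum $\{v\}$ determining which of the two cases applies. For a singular simplex $\sigma \colon \Delta^j \to cL$, define $c\sigma \colon \Delta^{j+1} \to cL$ by joining $\sigma$ to a new vertex that is sent to $v$ via the cone structure of the target, so that $\partial(c\sigma) = \sigma - c(\partial \sigma)$ in positive degrees. Preimages of singular strata of the form $Z \times (0,1) \subset cL$ are pushed up exactly one skeleton of the domain, so allowability at those strata is inherited from $\sigma$. The only constraint that changes is at the vertex: $(c\sigma)^{-1}(v)$ is the cone point of $\Delta^{j+1}$, which lies in the $0$-skeleton, so $c\sigma$ is $\{v\}$-allowable precisely when $(j+1) - n + \bar p(\{v\}) \geq 0$, i.e.\ $j \geq n-1-\bar p(\{v\})$.

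For the vanishing case $i \geq n-1-\bar p(\{v\})$, let $\xi$ be an allowable $i$-cycle. The chain $c\xi$ is then an allowable $(i+1)$-chain: the vertex constraint on its simplices is met by the inequality, the constraints at the other strata are inherited, and $\partial(c\xi) = \xi - c(\partial \xi) = \xi$ is allowable by hypothesis. Thus $\xi = \partial(c\xi)$ is a boundary and $I^{\bar p}H_i(cL;G) = 0$.

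For the isomorphism case $i < n-1-\bar p(\{v\})$, the vertex allowability requirement for a $j$-simplex with $j \in \{i-1, i, i+1\}$ forces the relevant skeleton of $\Delta^j$ to have negative dimension and hence to be empty, so every such allowable simplex must miss $v$. Consequently $I^{\bar p}C_*(cL-\{v\};G)$ and $I^{\bar p}C_*(cL;G)$ coincide in degrees $i-1$, $i$, and $i+1$, so the inclusion induces an isomorphism on $I^{\bar p}H_i$. Since $cL - \{v\} \cong L \times (0,1)$ and the inclusion $\{t\} \times L \hookrightarrow L \times (0,1)$ is a strata-preserving deformation retract, Appendix \ref{A: sphe} identifies $I^{\bar p}H_i(cL-\{v\};G)$ with $I^{\bar p}H_i(L;G)$ via the claimed inclusion $\{t\} \times L \hookrightarrow cL$.

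The relative formula then follows from the long exact sequence of the pair $(cL, L \times \{t\})$ combined with the absolute case: for $i \geq n - \bar p(\{v\})$ both $I^{\bar p}H_i(cL;G)$ and $I^{\bar p}H_{i-1}(cL;G)$ vanish, so the connecting homomorphism produces an isomorphism $I^{\bar p}H_i(cL, L;G) \cong I^{\bar p}H_{i-1}(L;G)$; for $i < n - \bar p(\{v\})$ the inclusion-induced map $I^{\bar p}H_{i-1}(L;G) \to I^{\bar p}H_{i-1}(cL;G)$ is injective (either an isomorphism by the absolute case, or its target is zero along with $I^{\bar p}H_i(cL;G)$), which squeezes $I^{\bar p}H_i(cL, L;G)$ to zero. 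The main obstacle is the bookkeeping underlying the cone operator: verifying that it is well-defined and respects allowability for all singular strata simultaneously under arbitrary general perversities, and that the cone-on-chains identity behaves correctly in the low-degree and degenerate cases.
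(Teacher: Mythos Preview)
The paper does not give its own proof of this proposition; it is stated with references to \cite[Proposition 2.1]{GBF23} and \cite[Proposition 2.18]{GBF10}. Your argument is the standard one and is essentially what appears in those references: the singular cone operator kills cycles above the cutoff, and below the cutoff the vertex allowability forces allowable simplices to miss $v$, reducing to $cL-\{v\}\simeq L$ via the stratified homotopy invariance of Appendix~\ref{A: sphe}. Your derivation of the relative formula from the long exact sequence is also correct, including the borderline degree $i=n-1-\bar p(\{v\})$ where $I^{\bar p}H_i(cL;G)=0$ supplies the needed vanishing.

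One point worth making explicit, since you flag the ``low-degree and degenerate cases'' as the main obstacle: in degree $0$ the identity $\partial(c\xi)=\xi$ is not automatic from $\partial c + c\partial=\mathrm{id}$, because the usual formula produces an extra term supported on the constant $0$-simplex at $v$. This is exactly where the coefficient convention of Section~\ref{j5} is needed: the constant simplex at $v$ has image in $(cL)^{n-1}$, so its coefficient group is trivial and the extra term is zero in $C_0(cL;G)$. Without that convention (i.e.\ with King's definition) the cone formula is different, as the paper remarks. So your bookkeeping concern is real, but the paper's chain-level definition resolves it.
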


\paragraph{A word about notation.}\label{notation}
The reader might be concerned that we are using the notation $I^{\bar p}H_*(X;\mc G)$ in a manner that is not consistent with all previous authors. There are, however, compelling reasons to consider our definition as the ``correct'' one for singular intersection homology in most settings. We enumerate these here:

\begin{enumerate}
\item For perversities $\bar p$ such that\footnote{Here $\bar t$ is the \emph{top perversity}, for which $\bar t(S)=\codim(S)-2$ if $S$ is a singular stratum.} $\bar p(S)\leq \bar t(S)$ for all singular strata $S$ and such that $\bar p$ depends only on the codimensions of $S$, our $I^{\bar p}H_*(X;\mc G)$ agrees with the original PL intersect homology of Goresky-MacPherson \cite{GM1} (who only considered perversities satisfying these conditions) on PL stratified pseudomanifolds, and it also agrees with the singular intersection homology of King \cite{Ki} (who only considers perversities satisfying the second condition); see \cite[Proposition 2.1]{GBF10}. It follows that our definition of intersection homology yields groups isomorphic to \emph{all} of those considered by Goresky and MacPherson in \cite{GM1} and \cite{GM2} either by geometric chain methods or sheaf-theoretic methods. In fact, the only disagreement between our groups and those appearing in the works of Goresky-MacPherson or  King that employ geometric chain methods occurs in the cases for which $\bar p(S)> \bar t(S)$ for some strata, for which our groups disagree with those of King. 

\item As a consequence of Proposition \ref{P: cone}, it follows that our intersection homology groups agree with those arising from the hypercohomology of the Deligne sheaf in situations where King's do not. In particular, our intersection homology groups agree with the sheaf-theoretic groups of Goresky-MacPherson \cite{GM2}, as well as  those of Cappell and Shaneson \cite{CS91} who require perversities such that $\bar p(S)$ might be $>\bar t(S)$. These works of Goresky-MacPherson and Cappell-Shaneson all use perversities such that $\bar p(S)\leq \bar p(T)$ if $T$ has a greater codimension than $S$, but the Deligne sheaf can be modified as in \cite{GBF23} to acquire sheaf theoretic generalizations of intersection homology duality via Verdier duality, and it is the geometric intersection homology groups presented here that agree with the hypercohomology of these modified Deligne sheaves\footnote{In \cite{HS91}, Habegger and Saper work out a different sheaf theoretic construction whose hypercohomology yields King's singular sheaves. However, as we discuss in \cite{GBF26}, this construction is quite complex, does not apply to the most general perversities, and yields a duality theorem that is not quite as satisfying as the one presented here (in particular, duality only holds using quite complex sheaf theoretic coefficients).}. An exposition of this background material can be found in \cite{GBF26}. 

\item The agreement of our intersection homology groups with those arising in sheaf theory settings that permit the use of Verdier duality, together with the work presented in this paper, demonstrates that our definition of intersection homology is suitable for obtaining intersection homology duality results. By contrast, working with the intersection homology groups of King (or with a more direct generalization of the definition of Goresky and MacPherson in the PL setting), Poincar\'e duality fails when perversities that take values above those of $\bar t$ are involved. For example, let us use the formulas in \cite{Ki} to compute King's intersection homology groups for the suspended torus $ST$ with the perversity $\bar p$ defined so that it takes the value $2$ at each of the suspensions points. As the suspension points have codimension $3$, $\bar p$ is greater than $\bar t$. The dual perversity $\bar q$ in this case takes the value $-1$ at each of the suspension points. Then, denoting the King intersection homology groups by $K^{\bar p}H_*$, we have

\begin{equation*}
K^{\bar p}H_i(ST;\Q)\cong 
\begin{cases}
\Q, & i=3,\\
\Q\oplus \Q,& i=2,\\
0, &i=1,\\
\Q, & i=0,
\end{cases} 
\qquad 
K^{\bar q}H_i(ST;\Q)\cong 
\begin{cases}
0, & i=3,\\
\Q,& i=2,\\
\Q\oplus \Q, &i=1,\\
\Q, & i=0.
\end{cases}
\end{equation*} 
We see that the  duality theorem must fail for these groups.

\item The intersection homology groups utilized here agree with those of Saralegi in his de Rham Theorem for unfoldable pseudomanifolds, relating singular intersection homology to perverse differential forms \cite{Sa05}. A proof of the agreement between our definition and Saralegi's can be found in the appendix to \cite{GBF20}. 
\end{enumerate}

Given these points, it seems reasonable to the authors to use the notation $I^{\bar p}H_*(X;\mc G)$ and the phrase ``intersection homology'' to refer to the groups described here.

\section{The K\"unneth theorem for intersection homology}\label{S: kunneth}

Let $X$ and $Y$ be stratified pseudomanifolds, and let $F$ be a field.  We 
stratify
$X\times Y$ in the obvious way: for any strata $Z\subset X$ and $S\subset Y$,
$Z\times S$
is a stratum of $X\times Y$.

By \cite[page 382]{GBF20}, the cross product (where
the tensor is over $F$)
\[
C_*(X;F)\otimes C_*(Y;F)\to C_*(X\times Y;F)
\]
restricts to give a map
\[
I^{\bar p}C_*(X;F)\otimes I^{\bar q}C_*(Y;F)\to I^QC_*(X\times Y;F)
\]
provided that $Q(Z\times S)\geq {\bar p}(Z)+{\bar q}(S)$ for all strata
$Z\subset X$, $S\subset Y$.  

We can now state the K\"unneth theorem:

\begin{theorem}
\label{j1}
Let $\bar p$ and $\bar q$ be perversities on $X$ and $Y$, and define a
perversity $Q_{\bar p,\bar q}$ on $X\times Y$ by
\begin{equation*}
 Q_{\bar p,\bar q}(Z\times S)=\begin{cases}
 \bar p(Z)+\bar q(S)+2, &\text{$Z,S$ both singular strata,}\\
\bar p(Z), &\text{$S$ a regular stratum and $Z$ singular,}\\
\bar q(S), &\text{$Z$ a regular stratum and $S$ singular,}\\
0,&\text{$Z,S$ both regular strata.}
 \end{cases}
 \end{equation*}
Then the cross product induces an isomorphism
\[
I^{\bar p}H_*(X;F)\otimes I^{\bar q}H_*(Y;F)\to I^QH_*(X\times Y;F).
\]
\end{theorem}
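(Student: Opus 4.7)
The plan is to bootstrap from a local calculation on distinguished neighborhoods up to the global statement, using Mayer--Vietoris as the gluing machinery. Both sides of the alleged isomorphism are functorial under stratified open inclusions, and both enjoy Mayer--Vietoris sequences: on the left, because tensoring with a field is exact, the Mayer--Vietoris sequence on $X$ (tensored with $I^{\bar q}H_*(Y;F)$) stays exact; on the right, because $I^{Q_{\bar p,\bar q}}H_*(X\times Y;F)$ satisfies Mayer--Vietoris with respect to stratified open covers of either factor. The cross product commutes up to sign with the connecting homomorphisms, so a five--lemma argument reduces the problem to the case where $X$ and $Y$ are distinguished neighborhoods. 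A standard induction on the cardinality of a good cover, in the style of Bredon, makes this precise and accommodates noncompact $X$, $Y$ via colimits.

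After this reduction I may assume $X = \mathbb{R}^i \times cL$ and $Y = \mathbb{R}^j \times cM$ with $L$ and $M$ compact stratified pseudomanifolds of strictly smaller depth than $X$ and $Y$. Stratified homotopy invariance (Appendix \ref{A: sphe}) lets me drop the Euclidean factors and reduce to $X = cL$ and $Y = cM$; an outer induction on the total depth of $X\times Y$ then lets me assume the theorem holds for all products of pseudomanifolds of smaller total depth, in particular for $L\times M$, $L\times cM$, and $cL\times M$.

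The cone case is where the numerical miracle in the definition of $Q_{\bar p,\bar q}$ occurs. I would compute $I^{Q_{\bar p,\bar q}}H_*(cL\times cM;F)$ from a Mayer--Vietoris decomposition such as $U = cL \times (cM\setminus\{v_M\})$, $V = (cL\setminus\{v_L\})\times cM$, noting that $U\cap V$ is stratified homotopy equivalent to $L\times M$ while $U$ and $V$ are stratified homotopy equivalent to $cL\times M$ and $L\times cM$. On these pieces the inductive hypothesis identifies the cross product with the tensor product. Applied directly at the product cone point $(v_L,v_M)$, the cone formula (Proposition \ref{P: cone}) truncates $I^{Q_{\bar p,\bar q}}H_*(cL\times cM;F)$ at degree $(n+m)-1-Q_{\bar p,\bar q}(\{(v_L,v_M)\}) = n+m-3-\bar p(\{v_L\})-\bar q(\{v_M\})$, and on the tensor product side the two one--factor cone formulas truncate $I^{\bar p}H_*(cL;F)\otimes I^{\bar q}H_*(cM;F)$ in a range whose total degree cuts off at exactly the same number. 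Matching these truncations against the inductive Künneth isomorphism on $L\times M$ closes the cone case.

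The hard part is precisely this cone bookkeeping. One must nail down the stratified deformation retractions carefully enough to invoke Appendix \ref{A: sphe} — the links along the mixed strata $\{v_L\}\times(M^j-M^{j-1})$ and $(L^i-L^{i-1})\times\{v_M\}$ are themselves iterated cones, so the filtrations need to be respected during each homotopy — and then verify by direct degree count that the $+2$ shift in $Q_{\bar p,\bar q}$ on singular--singular strata is exactly what is required to make the vanishing range of the algebraic tensor product coincide with that produced by the cone formula on the geometric product. Once this numerical match is established, the Mayer--Vietoris bootstrap and the outer induction on depth propagate the isomorphism from cones to arbitrary stratified pseudomanifolds, yielding Theorem \ref{j1}.
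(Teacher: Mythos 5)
Your proposal takes a genuinely different route from the paper's. The paper does not reprove the K\"unneth theorem from scratch here: it quotes the sheaf-theoretic K\"unneth theorem of \cite{GBF20} and then, in Appendix \ref{Appendix A} (Proposition \ref{l9}), runs a diagram chase with sheafifications, injective and $c$-fine resolutions to show that that abstract isomorphism agrees, up to sign, with the map induced by the chain-level cross product. You instead propose a direct singular-chain proof by Mayer--Vietoris bootstrapping and induction on depth, reducing to distinguished neighborhoods and then to the cone case. This is a legitimate strategy, but it amounts to reproving the main result of \cite{GBF20} rather than citing it; the paper's approach is short precisely because it outsources the hard local-to-global work to the earlier paper and only verifies compatibility with the cross product.

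That said, your outline has gaps that need real work. First, the reduction ``to the case where $X$ and $Y$ are both distinguished neighborhoods'' cannot be done in one stroke: the MV/Zorn reduction shrinks one factor while holding the other fixed, so you need a nested (lexicographic) induction --- first on $\mathrm{depth}(X)$, reducing to $X = cL$, then an inner induction on $\mathrm{depth}(Y)$, reducing to $Y = cM$ --- with the inductive hypotheses tracked so that $L\times M$, $cL\times M$, and $L\times cM$ are all available when you reach $cL\times cM$. Second, the cone step is not a ``direct degree count.'' Your cover $U = cL\times(cM\setminus\{v_M\})$, $V = (cL\setminus\{v_L\})\times cM$ covers $cL\times cM\setminus\{(v_L,v_M)\}$, not $cL\times cM$, so you must combine the MV computation with the cone formula applied to $cL\times cM \cong c(L*M)$; and $I^{\bar p}H_*(cL)\otimes I^{\bar q}H_*(cM)$ is a tensor product of two truncations, \emph{not} a truncation of $I^{\bar p}H_*(L)\otimes I^{\bar q}H_*(M)$, so matching first-vanishing degrees is not sufficient. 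What actually closes the cone case is computing, degree by degree, the cokernel of the MV map
\[
I^{Q}H_k(L\times M)\longrightarrow I^{Q}H_k(cL\times M)\oplus I^{Q}H_k(L\times cM)
\]
(using the inductive K\"unneth on each term together with the one-factor cone formulas), checking that this cokernel is exactly $\bigoplus_{i<a,\, j<b,\, i+j=k}I^{\bar p}H_i(L)\otimes I^{\bar q}H_j(M)$ where $a = n-1-\bar p(\{v_L\})$, $b = m-1-\bar q(\{v_M\})$, that the connecting map vanishes in the relevant range, and that the cross product intertwines all these identifications (naturality with respect to the cone inclusion, compatibility with the MV boundary). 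That bookkeeping does succeed --- the $+2$ in $Q_{\bar p,\bar q}$ makes the ranges line up --- but it is a genuine computation, and it is precisely the content that the paper chooses to import from \cite{GBF20} rather than redo.
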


This is a somewhat sharper form of the main result of \cite{GBF20}.
We show in Appendix \ref{Appendix A} how to deduce it from the results of
\cite{GBF20}.

\begin{remark}
In fact there are other choices of $Q$ that give isomorphisms, as 
explained in \cite{GBF20}, but this is the right choice for our purposes 
because of its compatibility with the diagonal map; see Proposition 
\ref{L: diag}.
\end{remark}

There is also a relative version of the K\"unneth theorem. 

\begin{theorem}\label{P: relative kunneth}
Let $X$ and $Y$ be stratified pseudomanifolds with open subsets $A\subset X, B\subset Y$. 
The cross product induces an isomorphism 
\[
I^{\bar p}H_*(X,A;F)\otimes I^{\bar 
q}H_*(Y,B;F)\to I^{Q_{\bar p,\bar q}}H_*(X\times Y,(A\times Y)\cup 
(X\times B);F). 
\]
\end{theorem}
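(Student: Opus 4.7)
My approach is to upgrade the absolute Künneth theorem (Theorem \ref{j1}) to the relative setting by a five-lemma argument coupled with a Mayer--Vietoris computation.

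I would first observe that the chain-level cross product carries the subcomplex
\[
\mc K := I^{\bar p}C_*(A;F)\otimes I^{\bar q}C_*(Y;F) + I^{\bar p}C_*(X;F)\otimes I^{\bar q}C_*(B;F)
\]
into $I^{Q_{\bar p,\bar q}}C_*((A\times Y)\cup(X\times B);F)$, and that on the quotients it therefore induces the desired chain map
\[
I^{\bar p}C_*(X,A;F)\otimes I^{\bar q}C_*(Y,B;F) \to I^{Q_{\bar p,\bar q}}C_*\bigl(X\times Y,\,(A\times Y)\cup(X\times B);F\bigr),
\]
using the standard identification of $(I^{\bar p}C_*(X)\otimes I^{\bar q}C_*(Y))/\mc K$ with $I^{\bar p}C_*(X,A)\otimes I^{\bar q}C_*(Y,B)$. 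Because $F$ is a field, Künneth for complexes of $F$-vector spaces identifies the homology of the source with $I^{\bar p}H_*(X,A;F)\otimes I^{\bar q}H_*(Y,B;F)$, so it suffices to show this induced map is a quasi-isomorphism.

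Next I would compare, via the cross product, the two short exact sequences
\[
0\to \mc K \to I^{\bar p}C_*(X)\otimes I^{\bar q}C_*(Y) \to I^{\bar p}C_*(X,A)\otimes I^{\bar q}C_*(Y,B) \to 0
\]
and
\[
0 \to I^{Q_{\bar p,\bar q}}C_*((A\times Y)\cup(X\times B)) \to I^{Q_{\bar p,\bar q}}C_*(X\times Y) \to I^{Q_{\bar p,\bar q}}C_*(X\times Y,\,(A\times Y)\cup(X\times B)) \to 0.
\]
The middle vertical map is a quasi-isomorphism by Theorem \ref{j1}. By the five-lemma applied to the associated long exact sequences in homology, the rightmost vertical map will be a quasi-isomorphism provided that the leftmost vertical map $\mc K \to I^{Q_{\bar p,\bar q}}C_*((A\times Y)\cup(X\times B))$ is one as well.

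This last verification is where I expect the main work. I would handle it by a second, parallel Mayer--Vietoris comparison. The open cover $\{A\times Y,\,X\times B\}$ of $(A\times Y)\cup(X\times B)$ yields, via the barycentric-subdivision argument that drives excision and Mayer--Vietoris for intersection homology with general perversities (noted in Section \ref{j5}), a long exact Mayer--Vietoris sequence on $I^{Q_{\bar p,\bar q}}H_*$ with successive terms $I^{Q_{\bar p,\bar q}}H_*(A\times B)$, $I^{Q_{\bar p,\bar q}}H_*(A\times Y)\oplus I^{Q_{\bar p,\bar q}}H_*(X\times B)$, and $I^{Q_{\bar p,\bar q}}H_*((A\times Y)\cup(X\times B))$. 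Dually, $\mc K$ is the pushout of $I^{\bar p}C_*(A)\otimes I^{\bar q}C_*(Y)$ and $I^{\bar p}C_*(X)\otimes I^{\bar q}C_*(B)$ along $I^{\bar p}C_*(A)\otimes I^{\bar q}C_*(B)$, giving an analogous algebraic Mayer--Vietoris sequence whose three absolute terms compute, by flatness of $F$, to $I^{\bar p}H_*(A)\otimes I^{\bar q}H_*(B)$, $I^{\bar p}H_*(A)\otimes I^{\bar q}H_*(Y)$, and $I^{\bar p}H_*(X)\otimes I^{\bar q}H_*(B)$. Applying Theorem \ref{j1} to each of the three absolute product pairs $A\times B$, $A\times Y$, $X\times B$ and invoking the five-lemma one more time will yield the quasi-isomorphism for $\mc K$, completing the argument.
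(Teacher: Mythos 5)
Your proof is correct, but it is organized differently from the paper's own argument. The paper relativizes one variable at a time: a first five-lemma comparison of the long exact sequence of the pair $(X,A)$ tensored with $I^{\bar q}H_*(Y;F)$ against the long exact sequence of $(X\times Y, A\times Y)$ establishes the Künneth isomorphism for $I^{\bar p}H_*(X,A)\otimes I^{\bar q}H_*(Y) \to I^{Q}H_*(X\times Y, A\times Y)$; a second five-lemma comparison, now tensoring $I^{\bar p}H_*(X,A;F)$ with the long exact sequence of $(Y,B)$ and mapping into the long exact sequence arising from the Mayer--Vietoris short exact sequence
\[
0\to I^{Q}C_*(X\times B, A\times B)\to I^{Q}C_*(X\times Y, A\times Y)\to I^{Q}C_*(X\times Y, (A\times Y)\cup(X\times B))\to 0,
\]
finishes the argument. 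Your version instead sets up a single five-lemma comparison between the short exact sequence
\[
0\to\mc K\to I^{\bar p}C_*(X)\otimes I^{\bar q}C_*(Y)\to I^{\bar p}C_*(X,A)\otimes I^{\bar q}C_*(Y,B)\to 0
\]
and its geometric counterpart, and resolves the remaining term $\mc K$ with a secondary Mayer--Vietoris comparison. Both routes invoke the same ingredients (the absolute Künneth theorem \ref{j1}, flatness of $F$, the five lemma, and excision/Mayer--Vietoris for the open cover $\{A\times Y,\, X\times B\}$), so neither is substantially shorter; the paper's iterative scheme is slightly more economical because it never needs to identify $\mc K$ explicitly, while your approach is more symmetric in the two pairs and makes the role of the subcomplex $I^{Q}C_*(A\times Y)+I^{Q}C_*(X\times B)\hookrightarrow I^{Q}C_*((A\times Y)\cup(X\times B))$ (and the quasi-isomorphism it furnishes) more transparent. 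One small point you leave implicit and should make explicit: the chain-level cross product actually lands in the subcomplex $I^{Q}C_*(A\times Y)+I^{Q}C_*(X\times B)$ rather than in $I^{Q}C_*((A\times Y)\cup(X\times B))$ itself; passing from the former to the latter is precisely where the small-simplices/Mayer--Vietoris quasi-isomorphism is used, and this deserves a sentence.
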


The  proof is given in Appendix \ref{Appendix A}.

\section{The diagonal map,  cup product, and cap product}
\label{j3}

\subsection{The diagonal map}\label{S: diag}

In this subsection, we define the algebraic diagonal map using the method 
described
in the introduction.  The first step is to show that the geometric diagonal map
induces a map of intersection chains for suitable perversities.  

First we need some notation.  Recall that 
the {\it top perversity} $\bar t$ is defined by 
\[
{\bar t}(Z)
=
\begin{cases}
0, &  \text{if $Z$ is regular}, \\
\codim(Z)-2, & \text{if $Z$ is singular}.
\end{cases}
\]

\begin{definition}\label{D: dual perv}
Let $\bar p$ be a perversity. Define the \emph{dual perversity} $D\bar p$ by
\[
D\bar p(Z)=\bar t(Z)-\bar p(Z).
\]
\end{definition}

With the notation of Theorem \ref{j1},
let us write $Q_{\bar p,\bar q,\bar r}$ for $Q_{Q_{\bar p,\bar q},\bar r}$
(which is equal to $Q_{\bar p,Q_{\bar q,\bar r}}$).

\begin{proposition}\label{L: diag}
Let $d:X\to X\times X$ be the diagonal and let $G$ be an abelian group.
\begin{enumerate}
\item 
If
$D\bar r(Z)\geq D\bar p(Z)+D\bar q(Z)$ for each  stratum $Z$ of $X$ then 
$d$ induces a map
\[
d: I^{\bar r}C_*(X;G)\to I^{Q_{\bar p,\bar q}}C_*(X\times X;G).
\]

\item 
If 
$D\bar s(Z)\geq D\bar q(Z)+D\bar r(Z)$ for each stratum
$Z$ of $X$ then $1\times d$ induces a map
\[
1\times d: I^{Q_{\bar p,\bar s}}C_*(X\times X;G)\to I^{Q_{\bar 
p,\bar q,\bar r}}C_*(X\times X\times X;G). 
\]

\item 
If $D\bar s(Z)\geq D\bar p(Z)+D\bar q(Z)$ for 
each stratum
$Z$ of $X$, then $d\times 1$ induces a map
\[
d\times 1: I^{Q_{\bar s,\bar r}}C_*(X\times X;G)\to I^{Q_{\bar 
p,\bar q,\bar r}}C_*(X\times X\times X;G). 
\]
\end{enumerate}
\end{proposition}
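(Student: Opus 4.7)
The plan is to verify each part by pushing simplices forward and checking the target-perversity allowability condition directly, exploiting the fact that preimages of off-diagonal product strata under the diagonal or its variants are automatically empty.

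For part (1): if $\sigma:\Delta^i\to X$ is $\bar r$-allowable (as is each simplex with nonzero coefficient in a chain $\xi\in I^{\bar r}C_*(X;G)$ or in $\partial\xi$), then for $d\sigma:\Delta^i\to X\times X$ and any stratum $Z_1\times Z_2$ of $X\times X$, the preimage
\[
(d\sigma)^{-1}(Z_1\times Z_2)=\sigma^{-1}(Z_1)\cap\sigma^{-1}(Z_2)
\]
is empty unless $Z_1=Z_2$. For diagonal strata $Z\times Z$ with $Z$ regular the allowability condition is vacuous. For $Z$ singular, the identities $Q_{\bar p,\bar q}(Z\times Z)=\bar p(Z)+\bar q(Z)+2$ and $\codim(Z\times Z)=2\codim Z$ reduce what is required to
\[
\sigma^{-1}(Z)\subset \{(i-2\codim Z+\bar p(Z)+\bar q(Z)+2)\text{-skeleton of }\Delta^i\},
\]
which follows from the known inclusion $\sigma^{-1}(Z)\subset\{(i-\codim Z+\bar r(Z))\text{-skeleton}\}$ precisely when $\bar r(Z)\leq \bar p(Z)+\bar q(Z)-\bar t(Z)$. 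Substituting $\bar a(Z)=\bar t(Z)-D\bar a(Z)$ for $\bar a\in\{\bar p,\bar q,\bar r\}$ rewrites this as $D\bar r(Z)\geq D\bar p(Z)+D\bar q(Z)$. Continuity of $d$ gives $\partial d_*=d_*\partial$ automatically, so $d_*$ is a chain map between the asserted complexes; with constant coefficients the coefficient data transfers without further fuss.

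For parts (2) and (3) the procedure is identical and the two statements are symmetric. For (2), given a $Q_{\bar p,\bar s}$-allowable simplex $\tau:\Delta^i\to X\times X$ and a stratum $Z_1\times Z_2\times Z_3$ of $X\times X\times X$, the preimage under $(1\times d)\tau$ is empty unless $Z_2=Z_3$, in which case it equals $\tau^{-1}(Z_1\times Z_2)$. One then unpacks $Q_{\bar p,\bar q,\bar r}(Z_1\times Z_2\times Z_2)$ via the recursion defining $Q$, remembering that $Z_2\times Z_2$ counts as a singular stratum of $X\times X$ exactly when $Z_2$ is singular in $X$. Running through the four subcases according to whether $Z_1$ and $Z_2$ are singular or regular, each required inequality collapses (after the same $\bar a=\bar t-D\bar a$ substitution) to exactly $D\bar s(Z_2)\geq D\bar q(Z_2)+D\bar r(Z_2)$. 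Part (3) is obtained by the same argument with the roles of the first and third factors interchanged: preimages are empty unless $Z_1=Z_2$, and the inequality collapses to $D\bar s(Z_1)\geq D\bar p(Z_1)+D\bar q(Z_1)$. The main, and indeed only, obstacle is the bookkeeping in these case analyses; there is no substantive geometric or homological content beyond the definitions.
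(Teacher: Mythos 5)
Your proof is correct and follows essentially the same approach as the paper's (which itself only writes out part 1 and declares the other two ``similar''): push simplices forward, observe that only the diagonal-type strata meet the image, compute the skeleton inequality, and translate it into the dual-perversity hypothesis. Your spelled-out case analysis for part 2 is accurate, including the observation that the constraint is vacuous when $Z_2$ is regular and collapses to $D\bar s(Z_2)\geq D\bar q(Z_2)+D\bar r(Z_2)$ in the remaining cases.
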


\begin{proof}
We prove the first part, the other two are similar.
A chain $\xi$ is in $I^{\bar r}C_i(X;G)$ if for any 
simplex $\sigma$ of $\xi$ and any singular stratum $Z$ of $X$, 
$\sigma^{-1}(Z)$ is contained in the $i-\codim(Z)+\bar r(Z)$ skeleton of the 
model simplex $\Delta^i$. 
Now the only singular strata of $X\times X$ which intersect the
image of $d$ have the form
$Z\times Z$, where $Z$ is a singular stratum of $X$,
so the chain $d(\xi)$ will be in $I^{Q_{\bar 
p,\bar q}}C_i(X\times X;G)$ if each $(d\sigma)^{-1}(Z\times Z)$ is 
contained in the $i-\codim(Z\times Z)+Q_{\bar p,\bar q}(Z\times Z)$ 
skeleton of the model simplex $\Delta^i$. 
For this it suffices to have
\[
i-\codim(Z)+\bar r(Z)\leq i-\codim(Z\times Z)
+Q_{\bar p,\bar q}(Z\times Z)=
i-2\,\codim(Z)+\bar p(Z)+\bar q(Z)+2,
\]
and this is equivalent to the condition in the hypothesis.
\end{proof}

Now we can define the algebraic diagonal map.

\begin{definition}\label{D: algdiag}
If $D\bar r\geq \bar Dp+\bar Dq$ let 
\[
\bar d: I^{\bar r}H_*(X;F)\to I^{\bar p}H_*(X;F)\otimes I^{\bar 
q}H_*(X;F)
\]
be the composite
\[
I^{\bar r}H_*(X;F) \xrightarrow{d}
I^{Q_{\bar p,\bar q}}H_*(X\times X;F)
\xleftarrow{\cong}
I^{\bar p}H_*(X;F)\otimes I^{\bar
q}H_*(X;F),
\]
where the second map is the K\"unneth isomorphism (Theorem \ref{j1}).
\end{definition}

In the remainder of this subsection we show that the algebraic diagonal map has 
the expected properties.

Note that $\bar d$ is a natural map due to the naturality of the cross product. 

\begin{proposition}[Coassociativity]\label{C: coass}
Suppose that 
$D\bar s\geq D\bar u+D\bar r$, 
$D\bar s\geq D\bar p+D\bar v$, 
$D\bar u\geq D\bar p+D\bar q$ and
$D\bar v\geq D\bar q+D\bar r$. 
Then the following diagram commutes 
\begin{diagram}
I^{\bar s}H_*(X;F) &\rTo^{\bar d}& I^{\bar u}H_*(X;F)\otimes I^{\bar r}H_*(X;F)\\ 
\dTo^{\bar d}&&\dTo_{\bar d\otimes 1}\\
I^{\bar p}H_*(X;F)\otimes I^{\bar v}H_*(X;F)&\rTo_{1\otimes \bar d}&I^{\bar p}H_*(X;F)\otimes I^{\bar q}H_*(X;F)\otimes I^{\bar r}H_*(X;F).
\end{diagram}
\end{proposition}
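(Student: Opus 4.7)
The plan is to reduce coassociativity of the algebraic diagonal to two essentially obvious facts: (i) the geometric identity $(d \times 1) \circ d = (1 \times d) \circ d$, both sides equalling the triple diagonal $D : X \to X \times X \times X$; and (ii) the associativity of the chain-level cross product, which is a standard property of the Eilenberg--Zilber map and which restricts without change to intersection chains (since, as noted in the paragraph preceding Proposition \ref{L: diag}, $Q_{\bar p,\bar q,\bar r}$ is unambiguously defined regardless of the bracketing).

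The first step is to unfold the definition of $\bar d$ along each path of the diagram. Along the clockwise path, $(\bar d \otimes 1) \circ \bar d$ becomes the composite of: the geometric map $d$; the K\"unneth inverse into $I^{\bar u}H_*(X) \otimes I^{\bar r}H_*(X)$; the map $d \otimes 1$; and the K\"unneth inverse into $I^{\bar p}H_*(X) \otimes I^{\bar q}H_*(X) \otimes I^{\bar r}H_*(X)$. Along the counterclockwise path $(1 \otimes \bar d) \circ \bar d$, one gets the analogous expression built from $d$, the K\"unneth inverse into $I^{\bar p}H_*(X) \otimes I^{\bar v}H_*(X)$, the map $1 \otimes d$, and then the K\"unneth inverse. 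I would then use naturality of the cross product (equivalently, the fact that the K\"unneth map is induced by the cross product) to slide the cross products past $d \otimes 1$ and $1 \otimes d$ respectively; this replaces each composite by one of the form $[\text{triple diagonal map of }X] \circ [\text{triple K\"unneth inverse}]$, where the triple diagonal map is $(d \times 1)\circ d$ in the clockwise case and $(1 \times d)\circ d$ in the counterclockwise case.

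Before doing any of this I would use Proposition \ref{L: diag} together with the four given perversity inequalities to check that every intermediate chain map lands in the intersection chain complex it is supposed to land in: for instance, $D\bar s \geq D\bar u + D\bar r$ lets the first $d$ of the clockwise path be viewed as a map into $I^{Q_{\bar u,\bar r}}C_*(X \times X)$, and $D\bar u \geq D\bar p + D\bar q$ then lets $d \times 1$ be viewed as a map into $I^{Q_{\bar p,\bar q,\bar r}}C_*(X^3)$; the counterclockwise path uses the other two inequalities symmetrically. After this bookkeeping, fact (i) identifies the two geometric triple diagonals and fact (ii) identifies the two triple cross products, so the square commutes.

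The main obstacle is really the bookkeeping in the previous paragraph: one has to be careful that every intermediate K\"unneth isomorphism exists on the appropriate intersection chain complex and that the naturality squares used to slide cross products past the partial diagonal maps also live in the appropriate complexes. Once this is set up, everything reduces to two well-known facts, one geometric and one algebraic, and the argument is purely formal.
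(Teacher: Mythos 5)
Your proposal is correct and follows essentially the same route as the paper: the paper's proof is a single $3\times 3$ chain-level diagram whose four squares are precisely your bookkeeping (existence of $d\times 1$ and $1\times d$ via Proposition~\ref{L: diag} and the four inequalities), your fact (i) (the upper-left square, the geometric identity $(d\times 1)d=(1\times d)d$), naturality of the cross product (upper-right and lower-left squares, your "sliding" step), and your fact (ii) (the lower-right square, associativity of the cross product).
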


\begin{proof}
Consider the following diagram (with coefficients left tacit):
\[
\xymatrix{
I^{\bar s}C_*(X)
\ar[r]^-d
\ar[d]_d
&
I^{Q_{\bar u,\bar r}}C_*(X\times X)
\ar[d]^{d\times 1}
&
I^{\bar u}C_*(X)\otimes I^{\bar r}C_*(X)
\ar[l]_-{\mathrm{q.i.}}
\ar[d]^{d\otimes 1}
\\
I^{Q_{\bar p,\bar v}}C_*(X\times X)
\ar[r]^-{1\times d}
&
I^{Q_{\bar p,\bar q,\bar r}}C_*(X\times X\times X)
&
I^{Q_{\bar p,\bar q}}C_*(X\times X)\otimes I^{\bar r}C_*(X)
\ar[l]_-{\mathrm{q.i.}}
\\
I^{\bar p}C_*(X)\otimes I^{\bar v}C_*(X)
\ar[u]^{\mathrm{q.i.}}
\ar[r]^-{1\otimes d}
&
I^{\bar p}C_*(X)\otimes I^{Q_{\bar q,\bar r}}C_*(X\times X)
\ar[u]_{\mathrm{q.i.}}
&
I^{\bar p}C_*(X)\otimes I^{\bar q}C_*(X)\otimes I^{\bar r}C_*(X)
\ar[l]_-{\mathrm{q.i.}}
\ar[u]_{\mathrm{q.i.}}
}
\]
Here the arrows $1\times d$ and $d\times 1$
exist by parts 2 and 3 of Proposition \ref{L: diag}. The arrows marked q.i.\ are induced by the cross product and are
quasi-isomorphisms by Theorem \ref{j1}.  The upper left square obviously
commutes, the upper right and lower left squares commute by naturality of the
cross product, and the lower right square commutes by associativity of the
cross product.  The result follows from this.
\end{proof}

\begin{proposition}[Cocommutativity]\label{C: cocomm}
If $D\bar r\geq D\bar p+D\bar q$ then the following diagram commutes.
\begin{diagram}
I^{\bar r}H_*(X;F)&\rTo^{\bar d} & I^{\bar q}H_*(X;F)\otimes I^{\bar p}H_*(X;F)\\
&\rdTo^{\bar d}&\dTo^{\cong}\\
&&I^{\bar p}H_*(X;F)\otimes I^{\bar q}H_*(X;F).
\end{diagram}
\qed
\end{proposition}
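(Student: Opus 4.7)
The plan is to reduce cocommutativity to two standard facts: the geometric identity $\tau \circ d = d$, where $\tau:X\times X\to X\times X$ is the swap of factors, and the Koszul symmetry of the cross product, $\tau_*(a\times b) = (-1)^{|a||b|}(b\times a)$. Note that the hypothesis $D\bar r\geq D\bar p+D\bar q$ is symmetric in $\bar p, \bar q$, so by Proposition \ref{L: diag} and Definition \ref{D: algdiag} both versions of $\bar d$ appearing in the diagram are defined.

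First I would observe that the perversity $Q_{\bar p,\bar q}$ of Theorem \ref{j1} satisfies $Q_{\bar p,\bar q}(Z\times S) = Q_{\bar q,\bar p}(S\times Z)$ for all pairs of strata $Z,S\subset X$; this is immediate from the four-case definition. Consequently $\tau$ induces a chain isomorphism
$$
\tau_*: I^{Q_{\bar p,\bar q}}C_*(X\times X;F)\xrightarrow{\cong} I^{Q_{\bar q,\bar p}}C_*(X\times X;F),
$$
and because $\tau \circ d = d$ as maps of spaces, $\tau_*\circ d_* = d_*$ as chain maps out of $I^{\bar r}C_*(X;F)$ (with targets in the respective complexes).

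Next, the Koszul symmetry of the singular cross product restricts to intersection chains with the indicated perversities, since $\tau_*$ preserves allowability as above. Writing $\mu_{p,q}$ for the K\"unneth isomorphism $I^{\bar p}H_*(X;F)\otimes I^{\bar q}H_*(X;F)\to I^{Q_{\bar p,\bar q}}H_*(X\times X;F)$ and $\sigma$ for the Koszul swap $I^{\bar q}H_*(X;F)\otimes I^{\bar p}H_*(X;F)\to I^{\bar p}H_*(X;F)\otimes I^{\bar q}H_*(X;F)$, this symmetry reads $\mu_{p,q}\circ \sigma = \tau_*\circ \mu_{q,p}$. Combining with the previous paragraph,
$$
\sigma\circ \bar d_{q,p} = \sigma\circ \mu_{q,p}^{-1}\circ d_* = \mu_{p,q}^{-1}\circ \tau_*\circ d_* = \mu_{p,q}^{-1}\circ d_* = \bar d_{p,q},
$$
which is precisely the commutativity asserted by the proposition.

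No significant obstacle is expected. The only substantive checks beyond setup are the symmetry of $Q_{\bar p,\bar q}$ under swapping strata (trivial from the definition) and that the Koszul symmetry of the cross product survives when restricted to intersection chains with these perversities (which follows from the allowability compatibility of $\tau_*$ that we have just established). Everything else is a formal computation.
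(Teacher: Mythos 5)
Your argument is correct and spells out exactly what the paper leaves implicit by ending the statement with \qed: commutativity follows from $\tau\circ d=d$ together with the Koszul symmetry of the cross product, using the observation that $\tau$ carries $Q_{\bar q,\bar p}$-allowability to $Q_{\bar p,\bar q}$-allowability because $Q_{\bar p,\bar q}(Z\times S)=Q_{\bar q,\bar p}(S\times Z)$. This is the intended proof.
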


As background for our next result, note that for any $\bar q$ and any abelian 
group $G$ there is an augmentation $\varepsilon:I^{\bar q}H_*(X;G)\to G$ 
that takes a 0-chain to the sum of its coefficients and all other chains to 
$0$.  Also note that $D\bar t$ is identically $0$, so for every $\bar p$
there is an algebraic diagonal map 
\[
\bar d: I^{\bar p}H_*(X;F)\to I^{\bar t}H_*(X;F)\otimes I^{\bar
p}H_*(X;F).
\]

\begin{proposition}[Counital property]\label{P: counital}
For any $\bar p$, the composite
\[
I^{\bar p}H_*(X;F) \xrightarrow{\bar d}
I^{\bar t}H_*(X;F)\otimes I^{\bar
p}H_*(X;F)
\xrightarrow{\varepsilon \otimes 1}
F\otimes I^{\bar p}H_*(X;F)
\cong
I^{\bar p}H_*(X;F)
\]
is the identity.
\end{proposition}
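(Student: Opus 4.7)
The plan is to mimic the ordinary-homology argument, identifying the augmentation $\varepsilon$ with the map induced by the constant projection $\pi\colon X \to \mathrm{pt}$, and then using naturality of the cross product to rewrite the counital composite as $(\pi\times 1)_* \circ d_*$, which collapses since $(\pi\times 1)\circ d = \mathrm{id}_X$ after identifying $\mathrm{pt}\times X \cong X$. The identification $\varepsilon = \pi_*$ on $I^{\bar t}H_*(X;F)$ is automatic: a $\bar t$-allowable $0$-simplex $\sigma$ satisfies $\sigma^{-1}(Z) \subset \{(-2)\text{-skeleton of }\Delta^0\} = \emptyset$ for every singular stratum $Z$, so it lands in the regular stratum, and $\pi_*$ sends $\sum g_j x_j$ to $\sum g_j \in H_0(\mathrm{pt};F) = F$.

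The first substantive step is to check that $\pi\times 1\colon X\times X \to \mathrm{pt}\times X \cong X$ induces a chain map $I^{Q_{\bar t,\bar p}}C_*(X\times X;F) \to I^{\bar p}C_*(X;F)$. For a $Q_{\bar t,\bar p}$-allowable simplex $\sigma\colon\Delta^n \to X\times X$ and a singular stratum $Z$ of $X$, one writes
\[
((\pi\times 1)\circ\sigma)^{-1}(Z) = \sigma^{-1}(X\times Z) = \bigcup_{Y} \sigma^{-1}(Y\times Z),
\]
where $Y$ ranges over the strata of $X$. When $Y$ is singular, $\codim(Y\times Z) = \codim Y + \codim Z$ and $Q_{\bar t,\bar p}(Y\times Z) = \bar t(Y) + \bar p(Z) + 2 = \codim Y + \bar p(Z)$, so the allowability bound on $\sigma^{-1}(Y\times Z)$ is the $(n - \codim Z + \bar p(Z))$-skeleton; when $Y$ is regular the same bound follows from $Q_{\bar t,\bar p}(Y\times Z) = \bar p(Z)$. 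Taking the union yields $\bar p$-allowability of $(\pi\times 1)\circ\sigma$, and compatibility with boundaries is immediate.

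The second step invokes the naturality of the chain-level cross product with respect to the maps $\pi\colon X \to \mathrm{pt}$ and $\mathrm{id}_X$, producing a commutative square which on homology reads
\begin{diagram}
I^{\bar t}H_*(X;F)\otimes I^{\bar p}H_*(X;F) & \rTo^{\times} & I^{Q_{\bar t,\bar p}}H_*(X\times X;F)\\
\dTo^{\varepsilon\otimes 1} & & \dTo_{(\pi\times 1)_*}\\
F \otimes I^{\bar p}H_*(X;F) & \rTo^{\cong} & I^{\bar p}H_*(X;F)
\end{diagram}
where the bottom row is the canonical isomorphism (i.e.\ the Künneth cross product with a point). Since the top arrow is a (Künneth) isomorphism by Theorem \ref{j1}, this identifies the composite in the statement with $(\pi\times 1)_* \circ d_* = ((\pi\times 1)\circ d)_*$. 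The map $(\pi\times 1)\circ d\colon X \to X$ sends $x\mapsto(x,x)\mapsto(\ast,x)$, which is the identity under $\mathrm{pt}\times X \cong X$, giving the result. The main obstacle is the perversity bookkeeping in the first step, but it relies on the clean identity $\bar t(Y) + 2 = \codim Y$, which makes the extra product codimension cancel exactly against the $+2$ in the definition of $Q_{\bar t,\bar p}$.
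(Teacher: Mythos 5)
Your proof is correct and follows the same route as the paper's: the paper projects via $p_2\colon X\times X\to X$ (which is your $\pi\times 1$ after the identification $\mathrm{pt}\times X\cong X$), observes that $p_2$ carries $I^{Q_{\bar t,\bar p}}C_*(X\times X;F)$ into $I^{\bar p}C_*(X;F)$ ``by an easy argument using the definition of allowable chain,'' and derives commutativity of the right square from the shuffle-product model of the cross product. Your explicit allowability computation (hinging on $\bar t(Y)+2=\codim Y$) is exactly that ``easy argument'' spelled out, and your appeal to naturality and unitality of the cross product with a point is the same fact the paper obtains from the shuffle product, so the two arguments are essentially identical.
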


\begin{proof}
First observe that (by an easy argument using 
the definition of allowable chain) the projection $p_2:X\times X\to X$ 
induces a map
\[
I^{Q_{\bar t,\bar p}} H_*(X\times X; F)\to 
I^{\bar p} H_*(X;F). 
\]
Now it suffices to observe that
the following diagram commutes. 
\[
\xymatrix{
I^{\bar p}H_*(X;F) 
\ar[r]^-d
\ar[dr]_=
&
I^{Q_{\bar t,\bar p}}H_*(X\times X)
\ar[d]_{p_2}
&
I^{\bar t}H_*(X;F)\otimes I^{\bar p}H_*(X;F)
\ar[l]_-\cong
\ar[d]_{\varepsilon\otimes 1}
\\
&
I^{\bar p}H_*(X;F)
&
F\otimes I^{\bar p}H_*(X;F)
\ar[l]_-\cong
}
\]
The commutativity of the square follows easily
from the fact that the cross product is induced by the chain-level shuffle 
product \cite[Exercise VI.12.26(2)]{Dold}.\end{proof}

The results of this subsection also have relative forms. Suppose $A$ and $B$ 
are 
open subsets of $X$ and that $D\bar r\geq D\bar p+D\bar q$. Then there is an 
algebraic diagonal 
\[
\bar d: I^{\bar r}H_*(X, A\cup B;F)\to I^{\bar p}H_*(X, A;F)\otimes I^{\bar 
q}H_*(X, B;F),
\]
and the obvious generalizations of the preceding results hold.  Moreover, 
we have the following proposition.

\begin{proposition}
\label{j2}
Let $A$ be an open subset of $X$ and let $i:A\to X$ be the inclusion.
Then
the diagram
\[
\xymatrix{
I^{\bar r}H_*(X, A;F)
\ar[rr]^-{\bar d}
\ar[d]_\partial
&&
I^{\bar p}H_*(X, A;F)\otimes I^{\bar q}H_*(X;F)
\ar[d]^{\partial\otimes 1}
\\
I^{\bar r}H_*(A;F)
\ar[r]^-{\bar d}
&
I^{\bar p}H_*(A;F)\otimes I^{\bar q}H_*(A;F)
\ar[r]^-{1\otimes i}
&
I^{\bar p}H_*(A;F)\otimes I^{\bar q}H_*(X;F)
}
\]
commutes.
\end{proposition}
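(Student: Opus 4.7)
The plan is to unpack both diagonals via their defining composites $\bar d = \times^{-1}\circ d_*$, and then verify commutativity of the expanded rectangle by inserting an intermediate group $I^Q H_*(A\times X;F)$, where $Q = Q_{\bar p,\bar q}$. After this expansion, the outer rectangle decomposes into three commutative subdiagrams stacked vertically, each of which is a familiar naturality statement.

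The first subdiagram records naturality of the connecting homomorphism for the map of pairs $d:(X,A)\to (X\times X, A\times X)$, which is well-defined because $d(A)\subset A\times A\subset A\times X$; this gives
\[
\partial\circ d_* \;=\; d_*\circ \partial \colon I^{\bar r}H_*(X,A;F)\longrightarrow I^Q H_*(A\times X;F).
\]
The second subdiagram is the crucial compatibility of the cross product with $\partial$,
\[
\partial\circ \times \;=\; \times\circ (\partial\otimes 1)\colon I^{\bar p}H_*(X,A;F)\otimes I^{\bar q}H_*(X;F)\longrightarrow I^Q H_*(A\times X;F),
\]
which follows from the chain-level Leibniz rule $\partial(x\times y)=(\partial x)\times y + (-1)^{|x|}x\times (\partial y)$: applied to a relative cycle $\tilde\alpha$ representing a class in $I^{\bar p}H_*(X,A;F)$ and an absolute cycle $\tilde\beta$ in $I^{\bar q}C_*(X;F)$, the second term vanishes and we obtain $\partial[\tilde\alpha\times \tilde\beta]=[\partial\tilde\alpha]\times [\tilde\beta]$. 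The third subdiagram is naturality of the cross product under the inclusion $i:A\hookrightarrow X$,
\[
(1\times i)_*\circ \times \;=\; \times\circ (1\otimes i_*)\colon I^{\bar p}H_*(A;F)\otimes I^{\bar q}H_*(A;F)\longrightarrow I^Q H_*(A\times X;F),
\]
and since $d\colon A\to A\times X$ factors as $A\xrightarrow{d} A\times A \xrightarrow{1\times i} A\times X$, this identifies the bottom path of the original diagram with the corresponding composite through $I^Q H_*(A\times X;F)$.

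The main obstacle is the second step: one must know that the Eilenberg--Zilber shuffle product, together with its Leibniz rule, restricts to the allowable subcomplexes so as to give a chain map $I^{\bar p}C_*(X;F)\otimes I^{\bar q}C_*(X;F)\to I^Q C_*(X\times X;F)$ with $\partial$ behaving correctly in the relative setting. This is guaranteed by the chain-level compatibility stated at the beginning of Section \ref{S: kunneth} together with the relative K\"unneth theorem (Theorem \ref{P: relative kunneth}) used to identify $I^Q H_*(A\times X;F)$ with $I^{\bar p}H_*(A;F)\otimes I^{\bar q}H_*(X;F)$. Once this is in hand, the remaining two squares are purely formal naturality statements, and stacking them yields the desired commutativity.
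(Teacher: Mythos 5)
Your proposal is correct and follows essentially the same route as the paper's proof: both insert the intermediate group $I^{Q_{\bar p,\bar q}}H_*(A\times X;F)$ and decompose the rectangle into three squares, commuting respectively by naturality of the connecting homomorphism, the chain-map property of the cross product, and naturality of the cross product under the inclusion $A\hookrightarrow X$.
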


\begin{proof}
This follows from the commutativity of the diagrams
\[
\xymatrix{
I^{\bar r}H_*(X, A;F)
\ar[rr]^-d
\ar[d]_\partial
&&
I^{Q_{\bar p,\bar q}}H_*(X\times X,A\times X;F)
\ar[d]^\partial
\\
I^{\bar r}H_*(A;F)
\ar[r]^-d
&
I^{Q_{\bar p,\bar q}}H_*(A\times A;F)
\ar[r]^-{1\times i}
&
I^{Q_{\bar p,\bar q}}H_*(A\times X;F),
}
\]
which commutes by the naturality of $\bd$,

\[
\xymatrix{
I^{Q_{\bar p,\bar q}}H_*(X\times X,A\times X;F)
\ar[d]_\partial
&
I^{\bar p}H_*(X, A;F)\otimes I^{\bar q}H_*(X;F)
\ar[l]_-\cong
\ar[d]^{\partial\otimes 1}
\\
I^{Q_{\bar p,\bar q}}H_*(A\times X;F)
&
I^{\bar p}H_*(A;F)\otimes I^{\bar q}H_*(X;F),
\ar[l]_-\cong
}
\]
which commutes because the cross product is a chain map, and

\[
\xymatrix{
I^{Q_{\bar p,\bar q}}H_*(A\times A;F)
\ar[r]^-{1\times i}
&
I^{Q_{\bar p,\bar q}}H_*(A\times X;F)
& 
I^{\bar p}H_*(A;F)\otimes I^{\bar q}H_*(X;F)\ar[l]_-\cong
\\
&
I^{\bar p}H_*(A;F)\otimes I^{\bar q}H_*(A;F),
\ar[lu]^\cong
\ar[ru]_{1\otimes i}
&
}
\]
which commutes by naturality of the cross product.
\end{proof}

\subsection{Cochains and the cup product}\label{S: cup}

We begin by defining intersection cochains and intersection cohomology with
field coefficients.

\begin{definition}
Define $I_{\bar p}C^*(X;F)$ to be
$\Hom_F(I^{\bar p}C_*(X;F),F)$ and $I_{\bar p}H^*(X;F)$ to be 
$H^*( I_{\bar p}C^*(X;F))$. 
Similarly for the relative groups:
$I_{\bar p}C^*(X,A;F)$ is $\Hom_F(I^{\bar p}C_*(X,A;F),F)$ and  $I_{\bar 
p}H^*(X,A;F)$ is $H^*( I_{\bar p}C^*(X,A;F))$.
\end{definition}

\begin{remark}
\label{l1}
Because $F$ is a field we have
\[
I_{\bar p}H^*(X;F)\cong\Hom_F(I^{\bar p}H_*(X;F),F)
\]
 and 
\[
I_{\bar p}H^*(X,A;F)\cong\Hom_F(I^{\bar p}H_*(X,A;F),F). 
\]
\end{remark}

\begin{remark}\label{R: coboundary sign} We will typically write $\alpha$ for a cochain and $x$ for a
chain.
We follow Dold's convention for the differential of a cochain
(see \cite[Remark VI.10.28]{Dold}; note that this differs slightly from the
Koszul convention):
\[
(\delta\alpha)(x)=-(-1)^{|\alpha|}\alpha(\bd x).
\]
This convention is necessary in order for the evaluation map to be a chain map.
\end{remark}

\begin{definition}
\label{l7}
If $D\bar s\geq D\bar p+D\bar q$,
we define the {\em cup product} in intersection cohomology $$\smallsmile: 
I_{\bar 
p}H^*(X;F)\otimes I_{\bar q}H^*(X;F)\to I_{\bar s}H^*(X;F)$$ by
$$(\alpha\smallsmile \beta)(x)=(\alpha\otimes \beta)\bar d(x).$$
Explicitly, if $d(x)=\sum y_i\times z_i$, then 
$(\alpha\smallsmile\beta)(x)=\sum (-1)^{|\beta||y_i|} \alpha(y_i)\beta(z_i)$. 
\end{definition}

\begin{proposition}[Associativity]\label{L: assoc}
Let $\bar p,\bar q,\bar r,\bar s$ be perversities such that $D\bar s \geq 
D\bar p+D\bar q+D\bar r$. Let $\alpha\in I_{\bar p}H^*(X;F)$, $\beta\in 
I_{\bar p}H^*(X;F)$, and
$\gamma\in I_{\bar r}H^*(X;F)$.
Then
\[
(\alpha\smallsmile\beta)\smallsmile\gamma
=\alpha\smallsmile(\beta\smallsmile\gamma)
\]
in $I_{\bar s}H^*(X;F)$. 
\end{proposition}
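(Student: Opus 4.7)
The plan is to reduce the desired associativity to the coassociativity of the algebraic diagonal (Proposition~\ref{C: coass}) together with the associativity of the tensor product over $F$, following the standard template for cup-product associativity in ordinary cohomology. First, I would introduce intermediate perversities $\bar u$ and $\bar v$ defined by $D\bar u = D\bar p + D\bar q$ and $D\bar v = D\bar q + D\bar r$. These are valid perversities because $D\bar t$ vanishes on regular strata, and the hypothesis $D\bar s \geq D\bar p + D\bar q + D\bar r$ immediately yields both $D\bar s \geq D\bar u + D\bar r$ and $D\bar s \geq D\bar p + D\bar v$. Consequently $\alpha\smallsmile\beta$ is defined in $I_{\bar u}H^*(X;F)$, $\beta\smallsmile\gamma$ is defined in $I_{\bar v}H^*(X;F)$, and both iterated cup products live in $I_{\bar s}H^*(X;F)$; moreover the four inequalities just verified are exactly the hypotheses of Proposition~\ref{C: coass}.

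Next I would expand both sides by applying Definition~\ref{l7} twice. For $x \in I^{\bar s}H_*(X;F)$, unwinding gives
\[
((\alpha\smallsmile\beta)\smallsmile\gamma)(x) = (\alpha\otimes\beta\otimes\gamma)\bigl((\bar d\otimes 1)\,\bar d(x)\bigr),
\]
and symmetrically
\[
(\alpha\smallsmile(\beta\smallsmile\gamma))(x) = (\alpha\otimes\beta\otimes\gamma)\bigl((1\otimes\bar d)\,\bar d(x)\bigr),
\]
after silently using the natural associativity isomorphism on the triple tensor product. Proposition~\ref{C: coass} asserts precisely that $(\bar d\otimes 1)\bar d$ and $(1\otimes\bar d)\bar d$ agree as maps into $I^{\bar p}H_*(X;F)\otimes I^{\bar q}H_*(X;F)\otimes I^{\bar r}H_*(X;F)$, so applying the common evaluation $\alpha\otimes\beta\otimes\gamma$ to both sides yields the identity.

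The main place where care is required is the sign bookkeeping. When one substitutes the explicit formula $(\alpha\smallsmile\beta)(y) = \sum (-1)^{|\beta||y'|}\alpha(y')\beta(y'')$ into the outer cup product, Koszul signs appear from two sources (the outer pairing involving $\gamma$ and the inner one involving $\beta$), and the right-hand side generates the corresponding signs in a different order. I expect these to match via a routine invocation of the Koszul sign rule, which is exactly what the evaluation map $\alpha\otimes\beta\otimes\gamma$ on a triple tensor product already encodes; this is the only nontrivial verification, since the structural content is carried entirely by Proposition~\ref{C: coass}.
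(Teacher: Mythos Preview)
Your proposal is correct and follows exactly the paper's approach: the paper's proof is the one-line remark that associativity follows from Proposition~\ref{C: coass} with the choices $\bar u = D(D\bar p + D\bar q)$ and $\bar v = D(D\bar q + D\bar r)$, which are precisely the intermediate perversities you introduce (since $D$ is an involution, your conditions $D\bar u = D\bar p + D\bar q$ and $D\bar v = D\bar q + D\bar r$ are equivalent). Your expansion of both sides via Definition~\ref{l7} and the sign discussion simply make explicit what the paper leaves to the reader.
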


\begin{proof}
This follows from Proposition \ref{C: coass}, with 
$\bar u=D(D\bar p+D\bar q)$ and $\bar v=D(D\bar q+D\bar r)$.
\end{proof}

\begin{proposition}[Commutativity]\label{L: comm}
Let $\bar p,\bar q, \bar s$ be perversities such that $D\bar s \geq D\bar 
p+D\bar q$.
Let $\alpha\in I_{\bar p}H^*(X;F)$, $\beta\in I_{\bar q}H^*(X;F)$. Then 
\[
\alpha\smallsmile \beta 
= (-1)^{|\alpha||\beta|} \beta\smallsmile \alpha
\]
in $I_{\bar s}H^*(X;F)$.
\end{proposition}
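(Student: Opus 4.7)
The plan is to mirror the standard proof of graded commutativity of the cup product in ordinary cohomology, relying on two ingredients: the geometric diagonal $d\colon X\to X\times X$ is invariant under the coordinate swap $T\colon X\times X\to X\times X$, and the homological cross product is graded commutative up to the Koszul twist.

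First I would observe that $T$ induces well-defined maps on the relevant intersection homology groups. Since the perversity $Q_{\bar p,\bar q}$ from Theorem \ref{j1} is symmetric, i.e.\ $Q_{\bar p,\bar q}(Z\times S)=Q_{\bar q,\bar p}(S\times Z)$, and $T^{-1}(Z\times S)=S\times Z$, the swap sends $I^{Q_{\bar p,\bar q}}$-allowable chains to $I^{Q_{\bar q,\bar p}}$-allowable chains. By the (equivariant) Eilenberg--Zilber theorem applied to ordinary singular chains and restricted to the allowable subcomplexes, $T_*$ agrees under the K\"unneth isomorphisms with the graded twist $\tau(a\otimes b)=(-1)^{|a||b|}\,b\otimes a$. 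Since $T\circ d=d$ on the space level, we obtain $T_*d_*=d_*$ on intersection homology, so that if $\bar d_{\bar p,\bar q}(x)=\sum y_i\otimes z_i\in I^{\bar p}H_*(X;F)\otimes I^{\bar q}H_*(X;F)$ then $\bar d_{\bar q,\bar p}(x)=\sum(-1)^{|y_i||z_i|}z_i\otimes y_i\in I^{\bar q}H_*(X;F)\otimes I^{\bar p}H_*(X;F)$.

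The proposition then reduces to a sign calculation. Evaluating
\[
(\beta\smallsmile\alpha)(x)=(\beta\otimes\alpha)\bar d_{\bar q,\bar p}(x)=\sum(-1)^{|y_i||z_i|}(\beta\otimes\alpha)(z_i\otimes y_i)
\]
with the Koszul convention $(\beta\otimes\alpha)(z\otimes y)=(-1)^{|\alpha||z|}\beta(z)\alpha(y)$, and noting that only terms with $|y_i|=|\alpha|$ and $|z_i|=|\beta|$ contribute when $x$ has total degree $|\alpha|+|\beta|$, the identity $\alpha\smallsmile\beta=(-1)^{|\alpha||\beta|}\beta\smallsmile\alpha$ drops out by comparison with the formula in Definition \ref{l7}. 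The only obstacle is sign bookkeeping; the conceptual content is entirely in the invariance $T\circ d=d$ combined with the graded commutativity of the cross product on homology.
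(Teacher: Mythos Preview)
Your proof is correct and follows essentially the same route as the paper. The paper simply cites Proposition~\ref{C: cocomm} (cocommutativity of the algebraic diagonal $\bar d$), which it states without proof; your argument unpacks exactly the content of that proposition---$T\circ d=d$ together with graded commutativity of the cross product---and then carries out the sign calculation to pass from cocommutativity of $\bar d$ to commutativity of $\smallsmile$.
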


\begin{proof}
This is immediate from Proposition \ref{C: cocomm}.
\end{proof}

\subsection{The cap product}\label{S: cap}

\begin{definition}
If $D\bar r\geq D\bar p+D\bar q$ and $A$, $B$ are open subsets of $X$, we
define the {\em cap product}
$$\smallfrown:I_{\bar q}H^i(X,B ;F)\otimes I^{\bar r}H_j(X, A\cup B;F)\to 
I^{\bar p}H_{j-i}(X, A;F)$$
by
\[
\alpha\smallfrown x=(1\otimes \alpha)\bar{d}(x).
\]
Explicitly, if $d(x)=\sum y_i\times z_i$, then $\alpha\smallfrown x=\sum
(-1)^{|\alpha||y_i|} \alpha(z_i)y_i$.
\end{definition}

\begin{remark}
This definition is modeled on \cite[Section VII.12]{Dold}.  The reason 
Dold has $1\otimes \alpha$ instead of $\alpha\otimes 1$ in the definition is
so that the cap product will make the chains a left module over the cochains
(in accordance with the fact that $\alpha$ is on the left in the symbol
$\alpha\smallfrown x$).
\end{remark}

In the remainder of this subsection we show that the cap product has the
expected properties.  We begin with the analogue of \cite[VII.12.6]{Dold}.

\begin{proposition}\label{P: cap inclusion}
Suppose $D\bar r\geq D\bar p+D\bar q$. Let $A,B,X', A',B'$ be open subsets of
$X$ with $A'\subset X'\cap A$ and $B'\subset X'\cap B$.  Let
$i:(X';A',B')\to (X;A,B)$ be the inclusion map of triads.
Let $\alpha\in I_{\bar q}H^k(X,B;F)$ and $x\in I^{\bar
r}H_j(X',A'\cup B';F)$. Then 
\[
 \alpha\smallfrown i_*x = 
i_*((i^*\alpha)\smallfrown x)
\]
in $I^{\bar p}H_{j-k}(X,A;F)$. 
\end{proposition}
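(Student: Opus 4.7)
The proof will be a formal diagram chase from the definition of cap product together with the naturality of the algebraic diagonal $\bar d$. The plan is to unpack both sides in terms of $\bar d$, reduce the claim to a commuting naturality square, and then use the identity $(1\otimes\alpha)\circ(i_*\otimes i_*) = i_*\otimes(\alpha\circ i_*)$ to finish.

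First I would unpack the two sides of the claimed identity. By the definition of cap product,
\[
\alpha\smallfrown i_*x = (1\otimes\alpha)\,\bar d(i_*x)\in I^{\bar p}H_{j-k}(X,A;F),
\]
where $\bar d$ is the relative algebraic diagonal $I^{\bar r}H_*(X,A\cup B;F)\to I^{\bar p}H_*(X,A;F)\otimes I^{\bar q}H_*(X,B;F)$ from the end of Section \ref{S: diag}. Similarly,
\[
(i^*\alpha)\smallfrown x = (1\otimes i^*\alpha)\,\bar d(x)\in I^{\bar p}H_{j-k}(X',A';F),
\]
with $\bar d$ now denoting the relative diagonal into $I^{\bar p}H_*(X',A';F)\otimes I^{\bar q}H_*(X',B';F)$. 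Thus the assertion reduces to showing that $(1\otimes\alpha)\,\bar d(i_*x)$ equals $i_*\bigl((1\otimes i^*\alpha)\,\bar d(x)\bigr)$ in $I^{\bar p}H_{j-k}(X,A;F)$.

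Next I would establish the naturality square
\[
\xymatrix{
I^{\bar r}H_*(X',A'\cup B';F) \ar[r]^-{\bar d} \ar[d]_{i_*} & I^{\bar p}H_*(X',A';F)\otimes I^{\bar q}H_*(X',B';F) \ar[d]^{i_*\otimes i_*} \\
I^{\bar r}H_*(X,A\cup B;F) \ar[r]^-{\bar d} & I^{\bar p}H_*(X,A;F)\otimes I^{\bar q}H_*(X,B;F).
}
\]
This follows from the naturality of the geometric diagonal $d$ (immediate from the definition, since $d\circ i = (i\times i)\circ d$), the naturality of the cross product, and the relative K\"unneth theorem (Theorem \ref{P: relative kunneth}), all in the form already noted for the algebraic diagonal at the end of Section \ref{S: diag}.

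With the square in hand, the conclusion is formal. On tensors of chains, $(1\otimes\alpha)\circ(i_*\otimes i_*) = i_*\otimes(\alpha\circ i_*) = i_*\otimes i^*\alpha$ by definition of the cochain pullback, and the Koszul signs match since $|i_*y|=|y|$. Therefore
\[
(1\otimes\alpha)\,\bar d(i_*x) \;=\; (1\otimes\alpha)(i_*\otimes i_*)\,\bar d(x) \;=\; i_*\bigl((1\otimes i^*\alpha)\,\bar d(x)\bigr) \;=\; i_*\bigl((i^*\alpha)\smallfrown x\bigr),
\]
which is the claim. The only step requiring real care is verifying the naturality square in the present relative triad setting, but this is essentially formal from the naturality of each ingredient in the definition of $\bar d$; I do not expect any substantive obstacle.
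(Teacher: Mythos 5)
Your proof is correct and follows essentially the same route as the paper's: unpack both sides via the definition of the cap product, use naturality of $\bar d$ to rewrite $\bar d(i_*x)=(i_*\otimes i_*)\bar d(x)$, and then absorb the inclusion into the cochain pullback. The paper's argument is just the chain of equalities $(1\otimes\alpha)\bar d(i_*x)=(1\otimes\alpha)(i_*\otimes i_*)\bar d(x)=(i_*\otimes i^*\alpha)\bar d(x)=i_*((i^*\alpha)\smallfrown x)$, which is exactly what you wrote.
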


\begin{proof}
\begin{align*}
\alpha\smallfrown i_*x 
&=
(1\otimes \alpha){\bar d}(i_*x) \\
&=
(1\otimes \alpha)(i_*\otimes i_*){\bar d}(x) \\
&=
(i_*\otimes i^*\alpha){\bar d}(x) \\
&=
i_*((i^*\alpha)\smallfrown x).
\end{align*}
\end{proof}

\begin{proposition}\label{P: cupcap}
Suppose 
$D\bar r\geq D\bar p+D\bar q+D\bar u$.  Let
$\alpha\in I_{\bar p}H^i(X;F)$, $\beta\in I_{\bar q}H^j(X;F)$, and
$x\in I^{\bar r}H_k(X;F)$. 
Then
$$(\alpha\smallsmile \beta)\smallfrown x =
\alpha\smallfrown(\beta\smallfrown x)$$ 
in $I^{\bar u}H_{k-i-j}(X;F)$.
\end{proposition}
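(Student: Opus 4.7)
The plan is to mirror the classical argument (e.g.\ \cite[VII.12.7]{Dold}): expand both sides as applications of $1\otimes\alpha\otimes\beta$ (up to Koszul signs) to an iterated algebraic diagonal of $x$, and identify the two iterations via the coassociativity Proposition \ref{C: coass}.

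First I would choose the intermediate perversities. Set $\bar s$ with $D\bar s = D\bar p + D\bar q$, which is the perversity on which $\alpha\smallsmile\beta$ naturally lives, and set $\bar v$ with $D\bar v = D\bar p + D\bar u$, which is the perversity on which $\beta\smallfrown x$ naturally lives. The hypothesis $D\bar r\geq D\bar p+D\bar q+D\bar u$ guarantees $D\bar r\geq D\bar u+D\bar s$ (so $(\alpha\smallsmile\beta)\smallfrown x$ is defined) and $D\bar r\geq D\bar q+D\bar v$ together with $D\bar v\geq D\bar p+D\bar u$ (so $\alpha\smallfrown(\beta\smallfrown x)$ is defined). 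These are exactly the four inequalities required to invoke Proposition \ref{C: coass} with the substitutions (coass $\bar s,\bar u,\bar r,\bar p,\bar q,\bar v$) $\leftrightarrow$ (our $\bar r,\bar u,\bar s,\bar u,\bar p,\bar v$)... more cleanly: apply Proposition \ref{C: coass} to produce the commutative square
\[
(\bar d\otimes 1)\bar d = (1\otimes \bar d)\bar d
\colon
I^{\bar r}H_*(X;F)\to I^{\bar u}H_*(X;F)\otimes I^{\bar p}H_*(X;F)\otimes I^{\bar q}H_*(X;F).
\]

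Next I would unwind both sides of the asserted identity against this common iterated diagonal. Writing $\bar d(x)=\sum y_i\otimes w_i\in I^{\bar u}H_*\otimes I^{\bar s}H_*$ and $\bar d(w_i)=\sum_j a_{ij}\otimes b_{ij}$, one computes
\[
(\alpha\smallsmile\beta)\smallfrown x
=(1\otimes(\alpha\otimes\beta))\,(1\otimes\bar d)\bar d(x),
\]
while writing $\bar d(x)=\sum y'_i\otimes z'_i\in I^{\bar v}H_*\otimes I^{\bar q}H_*$ and $\bar d(y'_i)=\sum_j c_{ij}\otimes d_{ij}$, one computes
\[
\alpha\smallfrown(\beta\smallfrown x)
=(1\otimes\alpha\otimes\beta)\,(\bar d\otimes 1)\bar d(x).
\]
By coassociativity the two iterated diagonals agree, so both expressions are equal as elements of $I^{\bar u}H_{k-i-j}(X;F)$.

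I expect the only real subtlety to be bookkeeping: verifying that the Koszul signs produced by Definition \ref{l7} and the cap-product definition assemble into the same sign on each side (which is automatic once everything is expressed as a single evaluation map on a tensor product), and making sure the auxiliary perversities $\bar s$ and $\bar v$ chosen above indeed satisfy all four hypotheses of Proposition \ref{C: coass}. Both are routine, so the substance of the proof really is a one-line appeal to coassociativity.
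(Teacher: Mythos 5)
Your proof is correct and follows essentially the same route as the paper's: both reduce the identity to the coassociativity of $\bar d$ (Proposition \ref{C: coass}), after expressing each side as $1\otimes\alpha\otimes\beta$ applied to an iterated algebraic diagonal of $x$. Your explicit choice $D\bar v = D\bar p + D\bar u$ for the perversity in which $\beta\smallfrown x$ lives is in fact cleaner than the paper's (whose stated auxiliary perversity $\bar w = D(D\bar q + D\bar r)$ does not satisfy the needed inequality and appears to be a misprint), but the substance of the argument is identical.
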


\begin{proof}
If we let $\bar v=D(D\bar p+D\bar q)$ and $\bar w=D(D\bar q+D\bar r)$ then
$\alpha\smallsmile\beta\in I_{\bar v}H^*(X;F)$ and $\beta\smallfrown x\in 
I^{\bar w}H_*(X;F)$, and thus both sides of the equation in the Proposition are 
defined. Now we have
\begin{align*}
(\alpha\smallsmile \beta)\smallfrown x
&=
(1\otimes (\alpha\smallsmile \beta)){\bar d}(x) \\
&=
(1\otimes \alpha\otimes \beta)(1\otimes{\bar d}){\bar d}(x) \\
&=
(1\otimes \alpha\otimes \beta)({\bar d}\otimes 1){\bar d}(x)
\ \text{by Proposition \ref{C: coass}} \\
&=
(1\otimes \alpha){\bar d}((1\otimes \beta){\bar d}(x)) \\
&=
\alpha\smallfrown(\beta\smallfrown x).
\end{align*}
\end{proof}

For our next result, note that (because $D\bar t$ is identically $0$) there is 
a cap product $I_{\bar p}H^i(X,A;F)\otimes I^{\bar p}H_j(X,A;F)\to I^{\bar 
t}H_{j-i}(X;F)$.

\begin{proposition}\label{P: capeval}
Let $A$ be an open subset of $X$. Let $\alpha\in I_{\bar p}H^i(X,A;F)$ and 
$x\in I^{\bar p}H_i(X,A;F)$. Then the image of  $\alpha\smallfrown x$ under the 
augmentation $\varepsilon:I^{\bar t}H_0(X;F)\to F$ is $\alpha(x)$.
\end{proposition}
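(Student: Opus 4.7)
My plan is to unfold the definition of the cap product, apply the augmentation, and reduce the claim to the counital property (Proposition \ref{P: counital}, in its relative form mentioned at the end of Subsection \ref{S: diag}). Since $|\alpha|=|x|=i$ and $\alpha\smallfrown x$ is required to land in $I^{\bar t}H_0(X;F)$, only the bidegree $(0,i)$ summand of $\bar d(x)\in I^{\bar t}H_*(X;F)\otimes I^{\bar p}H_*(X,A;F)$ contributes under $1\otimes\alpha$; as a bonus, the Koszul sign in the explicit cap product formula is trivial on that summand because the first tensor factor has degree $0$.

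Concretely, I would write this $(0,i)$ component as $\sum_j y_j\otimes z_j$ with $|y_j|=0$ and $|z_j|=i$. The explicit formula for the cap product gives
\[
\alpha\smallfrown x \;=\; (1\otimes\alpha)\bar d(x) \;=\; \sum_j \alpha(z_j)\,y_j,
\]
and since each $\alpha(z_j)\in F$ is a scalar, applying the augmentation yields
\[
\varepsilon(\alpha\smallfrown x) \;=\; \sum_j \alpha(z_j)\,\varepsilon(y_j).
\]
On the other hand, the relative form of Proposition \ref{P: counital} gives $(\varepsilon\otimes 1)\bar d(x)=x$, i.e.\ $\sum_j \varepsilon(y_j)\,z_j = x$ in $I^{\bar p}H_i(X,A;F)$. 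Applying the linear functional $\alpha$ to both sides produces
\[
\alpha(x) \;=\; \sum_j \varepsilon(y_j)\,\alpha(z_j),
\]
which agrees with the previous display.

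I do not anticipate a real obstacle here. The degree bookkeeping that strips away the Koszul sign is automatic once one writes out bidegrees in the graded tensor product over the field $F$, and the only genuinely non-formal input is the counital property, which has already been proved. The one small thing I would want to verify explicitly when writing the final version is that the relative counital identity $(\varepsilon\otimes 1)\bar d=\mathrm{id}$ on $I^{\bar p}H_*(X,A;F)$ follows from the absolute version by exactly the same argument used in Proposition \ref{P: counital}, using that the projection $p_2\colon X\times X\to X$ sends $(A\times X)\cup(X\times A)$ into $A$.
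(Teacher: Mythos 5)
Your argument is essentially the paper's proof with the middle step written out. The paper's proof is the three-line computation
\[
\varepsilon(\alpha\smallfrown x)
=\varepsilon(1\otimes \alpha)\bar d(x)
=\alpha(\varepsilon\otimes 1)\bar d(x)
=\alpha(x),
\]
where the middle equality is the sign-free swap you justify by working with the bidegree-$(0,i)$ component of $\bar d(x)$ and the last equality is exactly the relative counital property. So the structure, the key lemma invoked, and the decomposition are all the same; you have simply spelled out the scalar bookkeeping that the paper leaves implicit.

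One small correction to your closing remark: the projection $p_2\colon X\times X\to X$ does \emph{not} send $(A\times X)\cup(X\times A)$ into $A$, since $p_2(A\times X)=X$. What actually makes the relative counital identity work here is that the algebraic diagonal relevant to this proposition has the asymmetric relative form
\[
\bar d\colon I^{\bar p}H_*(X,A;F)\longrightarrow I^{\bar t}H_*(X;F)\otimes I^{\bar p}H_*(X,A;F),
\]
i.e.\ the first tensor factor is absolute. This comes from the geometric diagonal viewed as a map of pairs $(X,A)\to(X\times X,\,X\times A)$, using $d(A)\subset A\times A\subset X\times A$, and $p_2$ does send $X\times A$ into $A$, so the argument in Proposition \ref{P: counital} carries over verbatim. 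This correction does not affect the body of your proof.
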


\begin{proof}
\begin{align*}
\varepsilon(\alpha\smallfrown x)
&=
\varepsilon(1\otimes \alpha){\bar d}(x) \\
&=
\alpha(\varepsilon\otimes 1){\bar d}(x) \\
&=
\alpha(x)\ \text{by the relative version of \ref{P: counital}.}
\end{align*}
\end{proof}

\begin{proposition}\label{L: boundary}
Suppose $D\bar r\geq D\bar p+D\bar q$, and let $i:A\into X$ be the inclusion of
an open subset.
\begin{enumerate}
\item
Let $\alpha\in I_{\bar q}H^k(X;F)$ and $x\in I^{\bar r}H_j(X,A;F)$.  Then
$$\bd(\alpha\smallfrown x)
=(-1)^{|\alpha|}(i^*\alpha)\smallfrown (\bd x)$$ 
in $I^{\bar p}H_{j-k-1}(A;F)$,
where $\bd$ is the connecting homomorphism.
\item
Let $\alpha\in I_{\bar q}H^k(A;F)$ and $x\in I^{\bar r}H_j(X,A;F)$. Then
$$\delta(\alpha)\smallfrown x=-(-1)^{|\alpha|}i_*(\alpha\smallfrown\bd x)$$
in $I^{\bar p}H_{j-k}(X;F)$, where $\bd$ and $\delta$ are the connecting 
homomorphisms.
\end{enumerate}
\end{proposition}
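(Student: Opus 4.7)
The strategy is to deduce both parts from Proposition \ref{j2} (and a variable-swapped analog thereof for part (2)) by applying the operator $1\otimes\alpha$ in one tensor factor and carefully tracking the Koszul signs that arise when it is commuted past the boundary.

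For part (1), I would apply Proposition \ref{j2} to $x\in I^{\bar r}H_j(X,A;F)$, which gives
\[
(\partial\otimes 1)\bar d(x) = (1\otimes i_*)\bar d(\partial x)
\]
in $I^{\bar p}H_*(A;F)\otimes I^{\bar q}H_*(X;F)$, and then apply $1\otimes\alpha$ to both sides. The right-hand side collapses to $(1\otimes i^*\alpha)\bar d(\partial x) = (i^*\alpha)\smallfrown\partial x$ because $\alpha\circ i_* = i^*\alpha$. For the left-hand side, a direct check on elementary tensors $y\otimes z$ using the Koszul convention yields the operator identity $(1\otimes\alpha)(\partial\otimes 1) = (-1)^{|\alpha|}\,\partial\,(1\otimes\alpha)$, so this side equals $(-1)^{|\alpha|}\partial(\alpha\smallfrown x)$. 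Solving for $\partial(\alpha\smallfrown x)$ (and using $(-1)^{-|\alpha|}=(-1)^{|\alpha|}$) gives the stated formula.

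For part (2), I would first establish a variable-swapped analog of Proposition \ref{j2}: using the relative diagonal $\bar d: I^{\bar r}H_*(X,A;F)\to I^{\bar p}H_*(X;F)\otimes I^{\bar q}H_*(X,A;F)$ (available by the remark preceding Proposition \ref{j2} with the open subsets taken to be $\emptyset$ and $A$), one obtains
\[
(1\otimes\partial)\bar d(x) = (i_*\otimes 1)\bar d(\partial x)
\]
by exactly the same naturality argument as in the proof of Proposition \ref{j2}, with the two tensor factors interchanged. Next, the defining formula for $\delta$ together with Dold's convention of Remark \ref{R: coboundary sign} gives $(\delta\alpha)(z) = -(-1)^{|\alpha|}\alpha(\partial z)$ for $z\in I^{\bar q}H_{k+1}(X,A;F)$; unwinding the Koszul sign on elementary tensors, this upgrades to the operator identity $(1\otimes\delta\alpha) = -(-1)^{|\alpha|}(1\otimes\alpha)(1\otimes\partial)$ on $I^{\bar p}H_*(X;F)\otimes I^{\bar q}H_*(X,A;F)$. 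Combining these ingredients, I compute
\[
\delta\alpha\smallfrown x = (1\otimes\delta\alpha)\bar d(x) = -(-1)^{|\alpha|}(1\otimes\alpha)(i_*\otimes 1)\bar d(\partial x) = -(-1)^{|\alpha|}\,i_*(\alpha\smallfrown\partial x).
\]

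The main obstacle will be the bookkeeping of Koszul signs at each step, especially the sign in the relation $(\delta\alpha)(z) = -(-1)^{|\alpha|}\alpha(\partial z)$, which hinges on Dold's (non-Koszul) convention for the cochain differential. Establishing the variable-swapped form of Proposition \ref{j2} is routine, being a symmetric repetition of its proof with the two tensor factors interchanged, so no new conceptual content is needed there.
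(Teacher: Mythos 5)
Your proposal is correct and follows essentially the same route as the paper: unwind $\alpha\smallfrown x = (1\otimes\alpha)\bar d(x)$, invoke Proposition~\ref{j2} to push the connecting homomorphism through $\bar d$, and track the Koszul signs (including the extra sign in Dold's cochain differential). The only cosmetic difference is in part~(2): the paper obtains the variable-swapped identity $(1\otimes\partial)\bar d(x)=(i_*\otimes 1)\bar d(\partial x)$ by citing Proposition~\ref{j2} together with the relative version of cocommutativity (Proposition~\ref{C: cocomm}), whereas you re-derive it by rerunning the naturality argument of Proposition~\ref{j2} with the tensor factors interchanged; both are valid, and the sign bookkeeping you carry out matches the paper's.
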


\begin{proof}
We prove part 2; part 1 is similar. 

\begin{align*}
\delta(\alpha)\smallfrown x &= (1\otimes \delta(\alpha)){\bar d}(x) \\
&= -(-1)^{|\alpha|}(1\otimes \alpha)(1\otimes \bd){\bar d}(x) \\
&= -(-1)^{|\alpha|}(1\otimes \alpha)(i_*\otimes 1){\bar d}(\bd x) \quad\text{by Proposition \ref{j2} and the relative version of \ref{C: cocomm}} \\
&=-(-1)^{|\alpha|}i_*(\alpha\smallfrown\bd x)
\end{align*}
\end{proof}

We conclude this subsection with a fact which that be needed at one point in 
Section \ref{j4}.  First observe that if $M$ is a nonsingular manifold with
trivial stratification the cross product induces a map
\[
H_*(M;F)\otimes I^{\bar p}H_*(X;F)
\to
I^{\bar p}H_*(M\times X;F)
\]
for any perversity $\bar p$.
This map is an isomorphism by \cite[Corollary 3.7]{GBF20}, and we define the 
cohomology cross product
\[
\times:H^*(M;F)\otimes I_{\bar p}H^*(X;F)
\to
I_{\bar p}H^*(M\times X;F)
\]
to be the composite
\begin{multline*}
H^*(M;F)\otimes I_{\bar p}H^*(X;F) \cong \Hom_F(H_*(M;F),F)\otimes \Hom_F(I^{\bar p}H_*(X;F),F) \to\\
\Hom_F(H_*(M;F)\otimes I^{\bar p}H_*(X;F),F)\cong\Hom_F(I^{\bar p}H_*(M\times X;F),F)\cong I_{\bar p}H^*(M\times X;F).
\end{multline*}

\begin{remark}
\label{l2}
Note that the second map in this composite, and therefore the entire composite,
is an isomorphism whenever either $H_*(M;F)$ or $I^{\bar p}H_*(X;F)$ is
finitely generated.
\end{remark}

\begin{proposition}\label{j6}
Suppose $D\bar r\geq D\bar p+D\bar q$. Let $\alpha\in H^*(M;F)$, 
$x\in H_*(M;F)$, $\beta\in I_{\bar q}H^*(X;F)$, and 
$y\in I^{\bar r}H_*(X;F)$.  Then
\[
(\alpha\times\beta)\smallfrown(x\times y)=
(-1)^{|\beta||x|}(\alpha\smallfrown x)\times(\beta\smallfrown y)
\]
in $I^{\bar p}H_*(M\times X;F)$.
\end{proposition}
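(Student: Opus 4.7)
The plan is to leverage a single geometric observation---that the diagonal of a product factors through a shuffle---together with the K\"unneth theorem (Theorem~\ref{j1}) to produce a clean formula for $\bar d(x\times y)$, and then to unwind both sides of the asserted identity and match Koszul signs.

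On the space level, the diagonal $d_{M\times X}\colon M\times X\to(M\times X)\times(M\times X)$ factors as $T\circ(d_M\times d_X)$, where $d_M$, $d_X$ are the diagonals of $M$ and $X$ and $T\colon M\times M\times X\times X\to M\times X\times M\times X$ is the shuffle that swaps the middle two factors. Passing to intersection homology and using the K\"unneth isomorphism to identify the two relevant four-fold cross products with iterated tensor products, together with the Koszul rule that governs how $T_*$ acts on K\"unneth tensors, one obtains
\[
\bar d(x\times y)=\sum(-1)^{|x''_i||y'_j|}(x'_i\times y'_j)\otimes(x''_i\times y''_j)
\]
whenever $\bar d(x)=\sum x'_i\otimes x''_i$ and $\bar d(y)=\sum y'_j\otimes y''_j$. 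The perversity hypothesis $D\bar r\geq D\bar p+D\bar q$, together with Proposition~\ref{L: diag} applied to the product stratification, guarantees that all of these algebraic diagonals are defined.

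With this formula in hand, the rest is formal. Apply $1\otimes(\alpha\times\beta)$ to $\bar d(x\times y)$, using the definition of the cohomology cross product $(\alpha\times\beta)(u\times v)=(-1)^{|\beta||u|}\alpha(u)\beta(v)$ and the Koszul sign $(1\otimes\phi)(a\otimes b)=(-1)^{|\phi||a|}\phi(b)a$ built into the cap; this expresses $(\alpha\times\beta)\smallfrown(x\times y)$ as an explicit sum with terms of the form $\pm\,\alpha(x''_i)\beta(y''_j)(x'_i\times y'_j)$. Independently expand $(\alpha\smallfrown x)\times(\beta\smallfrown y)$ using the explicit formulas $\alpha\smallfrown x=\sum(-1)^{|\alpha||x'_i|}\alpha(x''_i)x'_i$ and analogously for $\beta\smallfrown y$. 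The two sums have the same non-sign content, and a short mod-$2$ computation---using $|\alpha|=|x''_i|$ and $|\beta|=|y''_j|$ (forced by nonvanishing of $\alpha(x''_i)\beta(y''_j)$) together with $|x|=|x'_i|+|x''_i|$---verifies that the signs differ by exactly $(-1)^{|\beta||x|}$, completing the proof. The main obstacle is the twist formula for $\bar d(x\times y)$ with the correct Koszul sign, which requires coordinating the K\"unneth isomorphism across the various four-fold products; the remainder is routine sign bookkeeping of the same flavor as in Propositions~\ref{C: coass} and \ref{P: cupcap}.
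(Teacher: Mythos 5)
Your argument is essentially the same as the paper's: the key observation that $d_{M\times X}=T\circ(d_M\times d_X)$, together with the K\"unneth isomorphism and the naturality/associativity/commutativity of the cross product, is exactly what makes the paper's commutative diagram work. The paper presents this diagrammatically and leaves the sign bookkeeping implicit, whereas you unwind it into an explicit twist formula for $\bar d(x\times y)$ and then match Koszul signs by hand, but the two are the same proof expressed at different levels of explicitness.
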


\begin{proof}
This is a straightforward consequence of the definitions and the commutativity
of (the outside of) the following diagram (where the $F$ coefficients are tacit
and $Q$ denotes $Q_{\bar p,\bar q}$).
{\small
\[
\xymatrix{
H_*(M)\otimes I^{\bar r}H_*(X)
\ar[ddd]_\times
\ar[rr]^-{{\bar d}\otimes{\bar d}}
\ar[rd]^-{d\otimes d}
&
&
H_*(M)\otimes H_*(M)\otimes I^{\bar p}H_*(X)\otimes I^{\bar q}H_*(X)
\ar[ld]_-{\times\otimes\times}
\ar[d]_\cong\\
&
H_*(M\times M)\otimes I^QH_*(X\times X)
\ar[d]_\times
&
H_*(M)\otimes I^{\bar p}H_*(X)\otimes H_*(M)\otimes I^{\bar q}H_*(X).
\ar[dd]_{\times\otimes\times}\\
&
I^QH_*(M\times X\times M\times X)
&\\
I^{\bar r}H_*(M\times X)
\ar[rr]^-{\bar d}
\ar[ru]^-d
&
&
I^{\bar p}H_*(M\times X)\otimes I^{\bar q}H_*(M\times X)
\ar[lu]_-\times
}
\]}
This diagram commutes by the definition of $\bar d$ and the naturality,
associativity, and commutativity properties of the cross product.
\end{proof}

\section{Fundamental classes}\label{S: fund}

Recall that the basic homological theory of oriented 
$n$-manifolds has three parts:  the calculation of $H_*(M,M-\{x\})$ and the 
construction of the local orientation class; the construction of the 
fundamental class in $H_*(M,M-K)$ for $K$ compact; and the 
calculation of $H_i(M)$ for $i\geq n$.  In this section we show that all of 
these have analogues for stratified pseudomanifolds using the $0$ perversity:

\begin{definition}
$\bar 0$ is the perversity which is 0 for all strata.
\end{definition}

\begin{remark}
If $X$ is a stratified pseudomanifold with no codimension one strata, then $\bar 0\leq \bar t$ and, for PL pseudomanifolds, $I^{\bar 0}H_*(X)$ agrees with the $\bar 0$ perversity intersection homology groups of Goresky and MacPherson \cite{GM1, GM2}. If $X$ has a non-empty codimension one stratum $S$, then $\bar 0(S)=0>\bar t(S)=-1$, and our definition of $I^{\bar 0}H_*(X)$ would disagree with the direct generalization of the definition of Goresky and MacPherson. See our prior discussion beginning on page \pageref{notation}.
\end{remark}

We begin with an overview of the main results, which will be proved in later
subsections.

Let $R$ be a ring, and let $X$ be an  $n$-dimensional stratified 
pseudomanifold.  As usual, we do not assume that $X$ is compact or connected 
and we allow strata of codimension one. 

Let $X_n$ denote $X-X^{n-1}$.  Recall the following definition from 
\cite[Section 5]{GM2}.

\begin{definition}
An {\em $R$-orientation of $X$} is an $R$-orientation of the manifold $X_n$.  
\end{definition}

Our first goal is to understand $I^{\bar 0}H_*(X,X-\{x\};R)$ (assuming $X$ is
$R$-oriented).

To begin with we note that for $x\in X_n$ we have $I^{\bar 
0}H_*(X,X-\{x\};R)\cong H_*(X_n,X_n-\{x\};R)$ by excision.  In particular the 
usual local orientation class in $H_n(X_n,X_n-\{x\};R)$ determines a local
orientation class $o_x\in I^{\bar 0}H_n(X,X-\{x\};R)$.

Next  we consider the case when $X$ is {\em normal} (that is, when its 
links are connected).\footnote{This differs from the definition of normal
given in \cite[Section 5.6]{GM2} but is equivalent in the cases considered
there.}
The following proposition generalizes a standard result for $R$-oriented 
manifolds.

\begin{proposition}\label{P: local o}
Let $X$ be a normal $R$-oriented $n$-dimensional stratified pseudomanifold. 
\begin{enumerate}
\item For all $x\in X$ and all $i\neq n$, $I^{\bar 0}H_{i}(X,X-\{x\};R)=0$.
\item
The sheaf generated by the presheaf $U\to I^{\bar 0}H_{n}(X,X-\bar U;R)$  is 
constant, and there is a unique global section $s$ whose value at each $x\in 
X_n$ is $o_x$.
\item
For all $x\in X$,  $I^{\bar 0}H_{n}(X,X-\{x\};R)$ is the free $R$-module
generated by $s(x)$.
\end{enumerate}
\end{proposition}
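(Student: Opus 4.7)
My plan is to induct on the depth of $X$. The base case, depth zero, reduces $X$ to a manifold $X_n$ and reduces all three statements to classical facts about oriented $n$-manifolds. For the inductive step at a regular point $x \in X_n$, excision gives $I^{\bar 0}H_*(X, X-\{x\}; R) \cong H_*(X_n, X_n - \{x\}; R)$ immediately.

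The substantive case is $x$ in a singular $i$-dimensional stratum with distinguished neighborhood $U \cong \R^i \times cL$ taking $x$ to $(0, v)$, where $L$ is a compact $(n-i-1)$-dimensional pseudomanifold. By excision,
\[
I^{\bar 0}H_*(X, X-\{x\}; R) \cong I^{\bar 0}H_*(\R^i \times cL, \R^i \times cL - \{(0,v)\}; R).
\]
Since $\R^i$ is a nonsingular manifold and $H_*(\R^i, \R^i - \{0\}; R)$ is free of rank one concentrated in degree $i$, I plan to apply the chain-level cross product for manifold-times-pseudomanifold to obtain an isomorphism (with no Tor obstruction)
\[
I^{\bar 0}H_*(\R^i \times cL, \R^i \times cL - \{(0,v)\}; R) \cong I^{\bar 0}H_{*-i}(cL, cL - \{v\}; R).
\]
The stratified deformation retract $cL - \{v\} \simeq L$ together with stratified homotopy invariance identifies the right-hand side with $I^{\bar 0}H_{*-i}(cL, L; R)$, and the cone formula (Proposition~\ref{P: cone}) for $\bar 0$-perversity, combined with the dimensional vanishing of $L$, collapses the whole computation to $I^{\bar 0}H_n(X, X-\{x\}; R) \cong I^{\bar 0}H_{n-i-1}(L; R)$ with all other degrees zero.

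Finally I would invoke the inductive hypothesis on $L$: normality of $X$ descends to $L$ since sub-links of $L$ are themselves links of strata of $X$, and the $R$-orientation of $X_n$ restricts through the distinguished chart to an $R$-orientation of $L_{n-i-1}$, so parts~2 and~3 applied to the compact pseudomanifold $L$ produce a fundamental class generating $I^{\bar 0}H_{n-i-1}(L; R) \cong R$. This establishes parts~1 and~3 at $x$. Part~2 is then obtained by a sheaf-theoretic gluing using the fundamental classes of the links as local trivializations over distinguished charts, with uniqueness of the global section $s$ forced by its prescribed value $o_x$ on the regular stratum. The main obstacle is the $\R^i$-splitting step over a general ring $R$, since Theorem~\ref{j1} is stated only over a field; I would either cite \cite[Corollary~3.7]{GBF20} in the ring setting or replace this step with a Mayer--Vietoris induction on $i$ over the standard two-chart cover of $\R^i$, reducing to the case $i = 0$ (the bare cone computation).
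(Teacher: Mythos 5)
Your overall strategy---induction on depth, excision to a distinguished neighborhood $\R^i \times cL$, a K\"unneth splitting off the $\R^i$ factor, the cone formula, then an inductive hypothesis applied to the link $L$---is exactly the route the paper takes. However there are two gaps that need closing.

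First, the inductive hypothesis is misidentified. You write that ``parts~2 and~3 applied to the compact pseudomanifold $L$ produce a fundamental class generating $I^{\bar 0}H_{n-i-1}(L;R)\cong R$.'' Parts~2 and~3 of Proposition~\ref{P: local o} only control the \emph{local} groups $I^{\bar 0}H_*(L,L-\{y\};R)$; the existence of a fundamental class is Theorem~\ref{T: global o}, and the computation $I^{\bar 0}H_{n-i-1}(L;R)\cong R$ is Theorem~\ref{x1}. The paper runs a \emph{simultaneous} induction: Proposition~\ref{P: local o} at depth~$d$ is proved assuming Proposition~\ref{P: local o} \emph{and} Theorems~\ref{T: global o} and~\ref{x1} at all lower depths, and then Theorems~\ref{T: global o} and~\ref{x1} at depth $d$ are proved using Proposition~\ref{P: local o} at depth $d$. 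You need to set up the induction this way, or the invocation of a fundamental class for $L$ is unjustified within the proof of the proposition alone.

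Second, part~2 --- that the orientation sheaf is constant with a preferred global section --- is where the bulk of the work actually lies, and your proposal compresses it to ``a sheaf-theoretic gluing using the fundamental classes of the links as local trivializations.'' The substantive content is this: one must show that the orientation of $L$ determined by the chart is coherent, and then that the single class $[\eta]\times\bar c\Gamma_L$ simultaneously generates $I^{\bar 0}H_n(X,X-\{x'\};R)$ for all $x'$ in a small neighborhood $N'$ (both regular and singular) \emph{and} restricts to the given $o_z$ for $z\in N'\cap X_n$. That is what gives local constancy, the correct normalization on the regular stratum, and the compatibility of local sections on overlaps needed to glue to a unique global $s$. Local constancy by itself does not produce the canonical section; you need the explicit generator and the agreement of its restriction with the previously defined $o_z$. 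Your worry about the K\"unneth step over a general ring $R$ is not a real obstruction, since the factor $\R^i$ is unfiltered and King's K\"unneth theorem applies directly, so neither of your proposed workarounds is needed.
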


\begin{definition} \label{D: normal local class}
Let $X$ be a normal $R$-oriented stratified pseudomanifold, and let $x\in X$.
Define the local orientation class $o_x\in I^{\bar 0}H_n(X,X-\{x\};R)$
to be $s(x)$.
\end{definition}

Now we recall that Padilla \cite{Pa03} constructs a
{\em normalization} $\pi:\hat X\to X$ for each stratified pseudomanifold $X$.
Here $\hat X$ is normal and $\pi$ has the properties given in \cite[Definition
2.2]{Pa03}; in particular $\pi$ is a finite-to-one map which
induces a homeomorphism from $\hat X-\hat X^{n-1}$ to $X-X^{n-1}$. Padilla
shows that the normalization is unique up to isomorphism (that is, up to
isomorphism there is a unique $\hat X$ and $\pi$ satisfying \cite[Definition 
2.2]{Pa03}).

\begin{proposition}\label{P: abnormal local o}
Let $X$ be an $R$-oriented $n$-dimensional stratified pseudomanifold, not
necessarily normal.  Give $\hat X$ the $R$-orientation induced by $\pi$.
Let $x\in X$.
\begin{enumerate}
\item For all $i\neq n$, $I^{\bar 0}H_{i}(X,X-\{x\};R)=0$.
\item
$I^{\bar 0}H_{n}(X,X-\{x\};R)$ is the free $R$-module
generated by the set $\{\pi_*(o_y)\,|\, y\in \pi^{-1}(x)\}$.
\end{enumerate}
\end{proposition}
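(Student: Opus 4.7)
The approach is to prove that the normalization map induces an isomorphism
$$
\pi_*:\; I^{\bar 0}H_*(\hat X,\hat X-\pi^{-1}(x);R)\;\xrightarrow{\cong}\; I^{\bar 0}H_*(X,X-\{x\};R),
$$
from which both parts of the proposition follow immediately: the domain decomposes (by excision around the finite set $\pi^{-1}(x)$) as $\bigoplus_{y\in\pi^{-1}(x)} I^{\bar 0}H_*(\hat X,\hat X-\{y\};R)$, and each summand is computed by applying Proposition \ref{P: local o} to the normal space $\hat X$. The case $x\in X_n$ is trivial, since there $\pi$ is a homeomorphism on a neighborhood of $x$ and the statement reduces to ordinary homology of the manifold $X_n$, so I focus on $x\in X^{n-1}$.

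First I would localize via excision. Pick a distinguished neighborhood $U\cong\R^i\times cL$ of $x$, with $x$ corresponding to the cone vertex $(0,v)$. The construction of the normalization in \cite{Pa03} ``un-glues'' the cone along the connected components $L_1,\ldots,L_k$ of the link $L$, so, shrinking $U$ if necessary, $\pi^{-1}(U)=V_1\sqcup\cdots\sqcup V_k$ with $V_j\cong\R^i\times cL_j$ a distinguished neighborhood of $y_j$ and $\pi|_{V_j}$ induced by the inclusion $L_j\hookrightarrow L$. By excision, the claim reduces to showing that
$$
\pi_*:\; \bigoplus_j I^{\bar 0}H_*(V_j,V_j-\{y_j\};R)\;\longrightarrow\; I^{\bar 0}H_*(U,U-\{x\};R)
$$
is an isomorphism. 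Using the standard identification $c(S^{i-1})\times cL\cong c(S^{i-1}*L)$, both $(U,U-\{x\})$ and $(V_j,V_j-\{y_j\})$ are local cone-vertex pairs, so Proposition \ref{P: cone} (applied to the relative form of the cone formula) gives vanishing outside degree $n$ --- which immediately proves part 1 --- and reduces the degree-$n$ question to showing that the inclusion-induced map
$$
\bigoplus_j I^{\bar 0}H_{n-1}(S^{i-1}*L_j;R)\;\longrightarrow\; I^{\bar 0}H_{n-1}(S^{i-1}*L;R)
$$
is an isomorphism. This follows from Mayer-Vietoris along the decomposition $L=\bigsqcup L_j$, since the common subspace $S^{i-1}$ appearing in the pairwise intersections has dimension $i-1$, strictly less than $n-1$ in all but the codimension-one case (where a small separate dimension-counting argument is required).

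The main obstacle is the generator-level identification: verifying that, under the chain of isomorphisms above (stratified homotopy equivalences from Appendix \ref{A: sphe}, LES of a pair, cone formula, Mayer-Vietoris), the local orientation class $o_{y_j}\in I^{\bar 0}H_n(V_j,V_j-\{y_j\};R)$ from Definition \ref{D: normal local class} maps to a generator of the $j$-th summand on the right. This is an orientation-bookkeeping check that follows by naturality of all the ingredients together with the proof of Proposition \ref{P: local o} (tracing the image of the top-stratum orientation class through the connecting maps). Once it is settled, $\{\pi_*(o_{y_j})\}_{j=1}^k$ is an $R$-basis of $I^{\bar 0}H_n(X,X-\{x\};R)$, proving part 2.
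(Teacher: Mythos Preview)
Your overall strategy---showing that $\pi_*$ is an isomorphism on the local pair and then reading off both statements from Proposition~\ref{P: local o} applied to $\hat X$---is correct and is exactly what the paper does. The difference lies in how the isomorphism is obtained. The paper does not localize or compute: it invokes Lemma~\ref{L: normal} (Appendix~\ref{A: normal}), which shows that normalization induces an isomorphism $I^{\bar p}H_*(\hat X;R)\to I^{\bar p}H_*(X;R)$ for every perversity by a direct chain-level argument (each allowable simplex has its interior in the top stratum, where $\pi$ is a homeomorphism, so the simplex lifts uniquely, and this lift is an inverse to $\pi_\#$). Since the restriction of $\pi$ over $X-\bar U$ is again a normalization, the five lemma gives the relative isomorphism for free, and the sheaf language packages the stalk statements. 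This avoids all local computations and works uniformly in codimension.

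Your route is valid but over-engineered. Once you have excised to $U\cong\R^{n-k}\times cL$ and $V_j\cong\R^{n-k}\times cL_j$, there is no need to pass to the join $S^{n-k-1}*L$: instead use K\"unneth with the unstratified factor $(\R^{n-k},\R^{n-k}-0)$ (exactly as in the proof of Proposition~\ref{P: local o}) to split off a degree shift, and then the cone formula reduces the question to whether $\bigoplus_j I^{\bar 0}H_{*}(L_j;R)\to I^{\bar 0}H_{*}(L;R)$ is an isomorphism. But $L=\bigsqcup_j L_j$ is a \emph{disjoint} union, so this is immediate---no Mayer-Vietoris on overlapping joins is needed, and the codimension-one case you flag as exceptional simply does not arise.
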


\begin{definition}\label{D: non-normal local class}
Let $X$ be an $R$-oriented stratified pseudomanifold and give $\hat X$ the
$R$-orientation induced by $\pi$.  For $x\in X$, define the local orientation
class $o_x\in I^{\bar 0}H_n(X,X-\{x\};R)$ to be
\[
\sum_{y\in \pi^{-1}(x)}\, \pi_*(o_y).
\]
\end{definition}

This is consistent with Definition \ref{D: normal local class} because for 
normal $X$, $\pi$ is the identity map.

Our next result constructs the fundamental class.

\begin{theorem}\label{T: global o}
Let $X$ be an $R$-oriented stratified pseudomanifold.  For each compact
$K\subset X$, there is a unique $\Gamma_K\in I^{\bar 0}H_n(X,X-K;R)$ that restricts to $o_x$ for each $x\in K$.
\end{theorem}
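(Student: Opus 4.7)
The plan is to follow Hatcher's argument (compare \cite[Theorems 3.26, 3.27]{Ha}), adapted to the stratified setting using the tools developed in this paper: Mayer--Vietoris and excision for intersection homology, the K\"unneth theorems (Theorems \ref{j1} and \ref{P: relative kunneth}), the cone formula (Proposition \ref{P: cone}), and the local computations in Propositions \ref{P: local o} and \ref{P: abnormal local o}. The key is to prove a stronger Main Lemma: for every compact $A\subset X$, (i) $I^{\bar 0}H_i(X,X-A;R)=0$ for all $i>n$, and (ii) the restriction map $I^{\bar 0}H_n(X,X-A;R)\to\prod_{x\in A}I^{\bar 0}H_n(X,X-\{x\};R)$ is injective, with any family $(\alpha_x)_{x\in A}$ forming a section of the local orientation sheaf lying in its image. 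Theorem \ref{T: global o} then follows immediately by applying the Main Lemma to $A=K$ and to the section $x\mapsto o_x$ determined by the $R$-orientation.

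The Main Lemma is established by the standard bootstrap. \emph{Step~1 (patching):} if the conclusions hold for compact sets $A_1$, $A_2$, and $A_1\cap A_2$, then they hold for $A_1\cup A_2$. This follows by applying the Mayer--Vietoris sequence associated to the open cover $\{X-A_1,X-A_2\}$ of $X-(A_1\cap A_2)$, using naturality of the restriction maps to points, and performing a diagram chase to obtain injectivity in degree $n$ and existence of the glued class. \emph{Step~2 (localization):} cover $A$ by finitely many distinguished neighborhoods and iterate Step~1 to reduce to $A\subset U$ for a single distinguished neighborhood $U\cong\R^i\times cL$. \emph{Step~3 (induction on depth):} the base case (depth zero) is Hatcher's result for manifolds. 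For the inductive step, apply one more Mayer--Vietoris inside $U$ by writing $A=A_1\cup A_2$, where $A_1$ sits in $\R^i\times([0,2\epsilon]\times L)/(0\times L)$ (a closed sub-cone neighborhood of the vertex $v$) and $A_2$ sits in $\R^i\times((\epsilon,1)\times L)$ (bounded away from $v$); the pieces $A_2$ and $A_1\cap A_2$ lie in a product of a manifold with $L$, and since $L$ has strictly smaller depth, the inductive hypothesis for $L$ combined with the relative K\"unneth theorem gives the conclusions for them (noting that $Q_{\bar 0,\bar 0}=\bar 0$ whenever one factor has only regular strata, which is automatic here). The piece $A_1$ is handled by excision together with the cone formula and relative K\"unneth, and a direct computation shows that the answer matches the description of $I^{\bar 0}H_n(X,X-\{x\};R)$ provided by Propositions \ref{P: local o} and \ref{P: abnormal local o}.

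The main obstacle is verifying that the reduction procedure assembles the correct local class in the non-normal case. At a singular point $x$, the class $o_x$ is defined as the sum over the finite fiber of the normalization $\pi\colon\hat X\to X$ (Definition \ref{D: non-normal local class}), and one must check that the Mayer--Vietoris plus cone-formula computation produces precisely this sum rather than some other element of the free module $I^{\bar 0}H_n(X,X-\{x\};R)$. This requires careful use of naturality of intersection homology under $\pi$, together with the fact that $\pi$ is proper, finite-to-one, and restricts to a homeomorphism over $X_n$, so it intertwines all the homological constructions in play. A secondary technical point is that each product arising in the reduction has at least one manifold factor, which forces $Q_{\bar 0,\bar 0}=\bar 0$ on the relevant strata and keeps the K\"unneth isomorphism within the $\bar 0$-perversity intersection homology throughout the induction.
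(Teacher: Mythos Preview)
Your overall strategy matches the paper's: Mayer--Vietoris patching to pass from $A_1$, $A_2$, $A_1\cap A_2$ to $A_1\cup A_2$, reduction by excision to a single distinguished neighborhood, and induction on depth. However, your Step~3 has a genuine gap.

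After reducing to $A$ inside $U\cong\R^{n-k}\times cL$ and writing $A=A_1\cup A_2$, the pieces $A_2$ and $A_1\cap A_2$ indeed lie in an open set of lower depth and are handled by induction. But $A_1$---the part of $A$ landing in the closed sub-cone---is still an \emph{arbitrary} compact subset of $\R^{n-k}\times c_{2\epsilon}L$. The cone formula and relative K\"unneth compute $I^{\bar 0}H_*(U,U-A_1;R)$ directly only when $A_1$ has the special product form $C\times c_rL$ with $C\subset\R^{n-k}$ compact convex; for a general compact $A_1$ there is no such ``direct computation'', and your recursion does not terminate. The paper closes this gap by introducing \emph{PM-convex} sets---sets of the form $C\times ([0,b]\times L)/\!\sim$ or $C\times[a,b]\times A'$---which play the role Hatcher's convex sets play in the manifold proof. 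They are closed under intersection, so iterated Mayer--Vietoris yields the Main Lemma for finite unions of PM-convex sets. For arbitrary compact $K\subset U$, existence of $\Gamma_K$ then comes by restricting from a large PM-convex $D\supset K$, while uniqueness (and the vanishing above degree $n$) requires a chain-level step: represent a class by a relative cycle $z$, cover $K$ by small PM-convex sets disjoint from $|\partial z|$, and lift the class to their union $P$, where the result is already known. This approximation argument is the missing ingredient in your outline.

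A secondary point: rather than threading the normalization $\pi$ through every stage of the induction as you propose, the paper first proves the theorem (in the stronger form of Proposition~\ref{L: fundamental}) for normal $X$, and only afterward transfers everything to non-normal $X$ in one stroke using the isomorphism $\pi_*\colon I^{\bar 0}H_*(\hat X,\hat X-\pi^{-1}K;R)\to I^{\bar 0}H_*(X,X-K;R)$ of Lemma~\ref{L: normal}. This makes the verification that $\Gamma_K$ restricts to $o_x=\sum_{y\in\pi^{-1}(x)}\pi_*(o_y)$ a short diagram chase rather than a compatibility to be rechecked at every inductive step.
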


\begin{definition}
Define the {\em fundamental class of $X$ over $K$} to be $\Gamma_K$.
\end{definition}

\begin{remark}\label{j7}
For later use we note that if $K\subset K'$ then the map $I^{\bar 0}H_n(X,X-K';R)\to I^{\bar 0}H_n(X,X-K;R)$ takes $\Gamma_{K'}$ to $\Gamma_K$.
\end{remark}

We then verify that  $I^{\bar 0}H_i(X;R)=0$ for $i\geq n$ when $X$ is 
compact and describe how the intersection homology of $X$ can be decomposed into summands corresponding to the regular strata of $X$.  Note that if $Z$ is a regular stratum of $X$ then the closure 
$\bar{Z}$ (with the induced filtration) is a stratified pseudomanifold; this 
follows from a straightforward induction over the depth of $X$.

\begin{theorem}\label{x1}
Let $X$ be a compact $R$-oriented $n$-dimensional stratified pseudomanifold.

\begin{enumerate}
\item
$I^{\bar 0}H_i(X;R)=0$ for $i>n$.

\item
The natural map
\[
\bigoplus_Z\, I^{\bar 0}H_n(\bar{Z};R)\to I^{\bar 0}H_n(X;R)
\]
is an isomorphism,
where the sum is taken over the regular strata of $X$.

\item If $Z$ is a regular stratum of $X$, then 
$I^{\bar 0}H_n(\bar{Z};R)$ is the free $R$-module generated by
the fundamental class of $\bar{Z}$.

\end{enumerate}
\end{theorem}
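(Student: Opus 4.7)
The plan is to adapt Hatcher's proof for oriented topological manifolds, using the local intersection homology data of Propositions~\ref{P: local o} and~\ref{P: abnormal local o} together with the fundamental classes from Theorem~\ref{T: global o}, and invoking Padilla's normalization $\pi\colon \hat X \to X$ to reduce the non-normal case to the normal one.

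The same local-to-global induction that produces $\Gamma_K$ in Theorem~\ref{T: global o}---starting from distinguished cone neighborhoods (where Proposition~\ref{P: cone} supplies the required vanishing and rank-one calculation) and assembled by Mayer--Vietoris---also shows that for each compact $K \subseteq X$ one has $I^{\bar 0}H_i(X, X-K; R) = 0$ for $i > n$ and the restriction map
\[
I^{\bar 0}H_n(X, X-K; R) \ \hookrightarrow\ \prod_{x \in K} I^{\bar 0}H_n(X, X-\{x\}; R)
\]
is injective. Taking $K = X$ yields part~(1) and supplies the central detection principle: any class in $I^{\bar 0}H_n(X; R)$ is determined by its pointwise restrictions. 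For each regular stratum $Z$, the closure $\bar Z$ inherits the structure of a compact $R$-oriented $n$-dimensional stratified pseudomanifold (its orientation restricted from $X_n \supseteq Z$), so Theorem~\ref{T: global o} yields $\Gamma_{\bar Z} \in I^{\bar 0}H_n(\bar Z; R)$. Writing $j_Z \colon \bar Z \hookrightarrow X$ for inclusion and noting that $\bar Z \cap X_n = Z$ (the regular strata partition $X_n$), I find that $j_{Z,*}\Gamma_{\bar Z}$ restricts at any point $x$ of a regular stratum $Z_0$ to $o_x$ if $Z_0 = Z$ and to $0$ otherwise (in the latter case $\bar Z \subseteq X - \{x\}$, so $j_Z$ factors through $X - \{x\}$). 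Combined with $I^{\bar 0}H_n(X, X-\{x\}; R) \cong R \cdot o_x$ at regular $x$ (from Proposition~\ref{P: abnormal local o}, using that $\pi^{-1}(x)$ is a singleton over $X_n$), this makes the map $\bigoplus_Z I^{\bar 0}H_n(\bar Z; R) \to I^{\bar 0}H_n(X; R)$ injective and, applied inside $\bar Z$ (which has $Z$ as its unique regular stratum), exhibits $R \cdot \Gamma_{\bar Z}$ as a rank-one free summand of $I^{\bar 0}H_n(\bar Z; R)$.

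The main obstacle is surjectivity in part~(2), after which part~(3) falls out of the single-stratum case applied to $\bar Z$. Given $\alpha \in I^{\bar 0}H_n(X; R)$, restriction at $x \in Z$ gives $\alpha|_x = c_Z(x)\, o_x$; local constancy of the local orientation class on the manifold $X_n$ together with connectedness of $Z$ shows $c_Z(x) = c_Z$ depends only on $Z$. Setting $\alpha' = \sum_Z c_Z\, j_{Z,*}\Gamma_{\bar Z}$ makes $\alpha - \alpha'$ vanish at every regular point; by the injectivity above it then suffices to force vanishing at every singular point as well. The pointwise restrictions of $\alpha - \alpha'$ form a section of the sheafification $\mathcal{S}$ of $U \mapsto I^{\bar 0}H_n(X, X-\bar U; R)$ that vanishes on the open dense subset $X_n$. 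In the normal case, Proposition~\ref{P: local o} makes $\mathcal{S}$ constant of rank one on each connected component, so vanishing on the dense open $X_n$ forces vanishing everywhere. In general, the stalk description of Proposition~\ref{P: abnormal local o} realizes $\mathcal{S}$ as the direct image $\pi_* \hat{\mathcal{S}}$ of the corresponding sheaf on the normal space $\hat X$ (which is homeomorphic to $X$ on regular parts); the normal case applied on $\hat X$ then gives the vanishing of the section corresponding to $\alpha - \alpha'$ at every singular point of $X$, and hence $\alpha = \alpha'$. This completes surjectivity, and with it parts~(2) and~(3).
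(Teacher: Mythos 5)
Your proof is correct, and it takes a genuinely different route from the paper's for parts~(2) and~(3). The paper passes immediately to the normalization $\pi\colon\hat X\to X$ and observes that $\hat X$ decomposes as the \emph{disjoint union} of the closures $\mathrm{cl}(\hat Z)$ over regular strata $Z$; the direct-sum decomposition of $I^{\bar 0}H_n(X;R)$ is then obtained essentially for free from $I^{\bar 0}H_n(\hat X;R)\cong\bigoplus_Z I^{\bar 0}H_n(\mathrm{cl}(\hat Z);R)$ together with the normalization isomorphisms $I^{\bar 0}H_n(\mathrm{cl}(\hat Z);R)\cong I^{\bar 0}H_n(\bar Z;R)$, and part~(3) follows because $\mathrm{cl}(\hat Z)$ is compact, connected, and normal. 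You instead stay on $X$ itself, build the candidate classes $j_{Z,*}\Gamma_{\bar Z}$ explicitly, and prove the map is an isomorphism by the stalkwise ``detection principle'' plus the constancy of the orientation sheaf over $\hat X$ (via $\mathcal{S}\cong\pi_*\hat{\mathcal{S}}$). Your version is more constructive---it exhibits a preimage $\alpha'=\sum c_Z\,j_{Z,*}\Gamma_{\bar Z}$ for any given class---but it leans more heavily on the detection principle and on the extension of vanishing from $X_n$ to all of $X$, a step the paper sidesteps entirely because the disjoint-union observation hands it the decomposition directly. Two minor points of hygiene in your write-up: your injectivity argument needs ``vanishing at regular points implies vanishing everywhere'' just as much as surjectivity does, so that step should be stated once, up front, before either direction is claimed; and the detection principle and the vanishing $I^{\bar 0}H_i(X,X-K;R)=0$ for $i>n$ in the non-normal case are not literally part of the inductive machinery of Theorem~\ref{T: global o}---they are transported from the normal case via normalization, as you acknowledge in your opening sentence, but the phrasing in your second paragraph makes it sound as if the induction itself delivers them.
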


\begin{remark}
If $X$ is connected and normal then $X$ has only one regular stratum, by
\cite[Lemma 2.1]{Pa03}.  So in this case we have $I^{\bar 0}H_n(X;R)\cong R$.
(This also follows from the second proposition in \cite[Section 5.6]{GM2}, but
that result does not give the relation with the local orientation classes.)
\end{remark}

Here is an outline of the rest of the section.  We prove Proposition 
\ref{P: local o} and Theorems \ref{T: global o} and \ref{x1} (assuming $X$ is
normal) in Subsections \ref{S: local o} and \ref{S: global o}.  We deduce 
Proposition \ref{P: abnormal local o} and the general case of Theorems
\ref{T: global o} and \ref{x1} in Subsection  \ref{x2}.  In Subsection 
\ref{x3}, we give some further properties of the fundamental class, and in 
Subsection \ref{S: fund indep}, we show that if $X$
is compact and has no strata of codimension one then $\Gamma_X$ is 
independent of the stratification. 

\begin{remark}In this section we focus attention on the perversity 
$\bar 0$ because this is where the fundamental class needed for our duality
results lives, but much of our work is also valid for other perversities and
even for ordinary homology:
\begin{enumerate}
\item
For any nonnegative perversity $\bar p$,
Propositions \ref{P: local o} and \ref{P: abnormal local o} and Theorems
\ref{T: global o} and \ref{x1} all hold with $I^{\bar 0}H_*$ replaced by 
$I^{\bar p}H_*$, except that it is not true in general that 
$I^{\bar p}H_*(X,X-\{x\};R)=0$ for $*<n$.  The proofs are exactly the same.

\item
If $X$ is normal and has no codimension one strata then Proposition \ref{P:
local o} and Theorems \ref{T: global o} and \ref{x1} hold with $I^{\bar 0}H_*$
replaced by ordinary homology $H_*$, except that it is not true that
$H_*(X,X-\{x\};R)=0$ for $*<n$.  Again, the proofs are exactly the same.

\end{enumerate}
\end{remark}

\subsection{The orientation sheaf}\label{S: local o}

Our goal in this subsection will be to prove Proposition \ref{P: local o}, while in the next subsection we prove Theorems \ref{T: global o} and \ref{x1} under the assumption that $X$ is normal. 
The general plan of the proofs is the same as in the classical case when $X$ is a manifold; see e.g. \cite[Theorem 3.26]{Ha} for a recent reference. However, there are some technical issues that need to be overcome due to the lack of homogeneity of $X$. 

The proofs of the proposition and the theorems proceed by a simultaneous 
induction on the depth of $X$. Note that if the depth of $X$ is $0$, then $X$ is a manifold, and all results follow from the classical manifold theory.
In the remainder of this subsection, we prove Proposition \ref{P: local o} under the assumption that Proposition \ref{P: local o} and Theorems \ref{T: global o} and \ref{x1} have been proven for normal stratified pseudomanifolds of depth less than that of $X$. In the next section, we will then use Proposition \ref{P: local o} to prove Theorems \ref{T: global o} and \ref{x1} at the depth of $X$.

\begin{proof}[Proof of Proposition \ref{P: local o}]
Recall we let $X_i=X^i-X^{i-1}$.  

We first show that for any $x\in X$, $I^{\bar 0}H_{n}(X,X-\{x\};R)\cong R$ and
$I^{\bar 0}H_{i}(X,X-\{x\};R)=0$ for $i\neq n$. This is trivial for $x\in
X_n$. 
For 
$x\in X_{n-k}$, we may assume that $x$ has a distinguished
neighborhood of the form $N\cong \R^{n-k}\times cL^{k-1}$. By excision (see
\cite{GBF10}), $I^{\bar 0}H_{i}(X,X-\{x\};R)\cong I^{\bar 0}H_{i}(N,N-\{x\};R)$, and by
the K\"unneth theorem \cite{Ki} with the unfiltered $(\R^{n-k},\R^{n-k}-0)$, this is
isomorphic to $I^{\bar 0}H_{i-(n-k)}(cL, cL-\{x\};R)$. By the cone formula, this is
$I^{\bar 0}H_{i-(n-k)-1}(L; R)$ for $i-(n-k)> k-1$ (i.e. for $i\geq n$) and $0$
otherwise.  The link $L$ is compact by the definition of a stratified pseudomanifold, it is connected since $X$ is normal, and it has depth less than that of $X$. So by induction, $I^{\bar 0}H_i(L;R)=0$ for $i>k-1$ and,  given an $R$-orientation of $L$ (which we shall find in a moment), we have  $I^{\bar 0}H_{k-1}(L;R)\cong R$ with a preferred generator $\Gamma_L$ representing the local orientation class. It follows that $I^{\bar 0} H_{i}(X,X-\{x\};R)=0$ for $i\neq n$ and $I^{\bar 0}H_{n}(X,X-\{x\};R)\cong R$ for any $x\in X$. 

The $I^{\bar 0}H_{n}(X,X-\{x\};R)$ are the stalks of the sheaf  $\mc O^X$ generated by the  presheaf $U\to I^{\bar 0} H_{n}(X,X-\bar U;R)$.  
We next   show this is a locally constant sheaf.  Certainly it is a locally-constant sheaf over $X_n$ by manifold theory. So assume by induction hypothesis that this sheaf is locally-constant over $X-X^{n-k}$. Let $x\in X_{n-k}$, and  again choose a distinguished neighborhood $N\cong \R^{n-k}\times cL$.  To appropriately orient $L$, we assume that $L$ is embedded in $cL$ as some $b\times L$ with $b\in (0,1)$, and
we use that for any choice $z\in N\cap X_n$, there is a local orientation class $o_z\in I^{\bar 0} H_n(X,X-\{z\};R)$, determined by the orientation of $X$. In particular, let $z\in L\cap X_n\subset N$, which we can write as  $z=(0,b,c)$ for $0\in \R^{n-k}$, $b\in (0,1)$ (along the cone line), and $c\in L-L^{k-2}$. Then $I^{\bar 0}H_n(X,X-\{z\};R)\cong H_{n-k}(\R^{n-k},\R^{n-k}-\{0\};\Z) \otimes H_{1}((0,1),(0,1)-\{b\};\Z)\otimes I^{\bar 0}H_{k-1}(L,L-\{c\};R)$. Choosing the canonical generators of $H_{n-k}(\R^{n-k},\R^{n-k}-\{0\};\Z)$ and $H_{1}((0,1),(0,1)-\{b\};\Z)$, the local orientation class of $I^{\bar 0}H_n(X,X-\{z\};R)$ thus determines a local orientation class of $I^{\bar 0}H_{k-1}(L,L-\{c\};R)$. Since $c\in L-L^{k-2}$ was arbitrary but the generators of $H_{n-k}(\R^{n-k},\R^{n-k}-\{0\};\Z)$ and  $ H_{1}((0,1),(0,1)-\{b\};\Z)$ are fixed and we know the generator of $I^{\bar 0}H_n(X,X-\{z\};R)$ remains constant over $L\cap X_n$, this determines a fixed $R$-orientation of $L$ and hence a choice of $\Gamma_L$.
 
Now, having chosen the $R$-orientation for $L$ and a corresponding fundamental class $\Gamma_L$,  a more careful look at the usual K\"unneth and cone formula arguments show that  $I^{\bar 0}H_{n}(N,N-\{x\};R)\cong R$ is generated by $[\eta]\times \bar c\Gamma_L$, where $\eta$ is a chain representing the local orientation class of $H_{n-k}(\R^{n-k}, \R^{n-k}-0;\Z)$ and $\bar c\Gamma_L$ is the singular chain cone on $\Gamma_L$. More explicitly, if we let  $\xi$ stand for a specific intersection chain representing the class $\Gamma_L$, continuing to consider $L$ as the subset $\{b\}\times L\subset cL$,  then $\bar c\Gamma_L$ is represented by the chain $\bar c\xi$ formed by extending each simplex $\sigma$ in $\xi$ to the singular cone simplex $[v,\sigma]$, where $v$ represents the cone point of $cL$. 

Now, if we assume $x$  lives at $0\times v\in N\cong \R^{n-k}\times cL$,
where $0$ is the origin in $\R^{n-k}$ and $v$ is the cone point of $cL$, then we can take a smaller neighborhood  $N'$ of $x$ with $N'\cong B_{\delta}\times \bar c_{\epsilon}L$, where $B_{\delta}$ is the ball of radius $\delta$ about the origin in $\R^{n-k}$ and $\bar c_{\epsilon}L$ is $([0,\epsilon]\times L)/\sim$ within the cone $([0,1)\times L)/\sim$ (where $\sim$ collapses $L\times 0$ to a point). We choose $\delta$ so that the image of $\eta$ in $H_{n-k}(\R^{n-k},\R^{n-k}-\{a\};\Z)$ is a generator for all $a\in B_{\delta}$, and we let $\epsilon<b$, where again we have embedded $L$ in $cL$ at distance $b$ from the vertex. 
With these choices,  the chain $[\eta]\times \bar c\Gamma_L$ not only generates $I^{\bar 0}H_{n}(N,N-\{x\};R)\cong I^{\bar 0}H_{n}(X,X-\{x\};R)\cong R$, but it also   restricts to the local orientation class $o_z\in I^{\bar 0} H_n(X,X-\{z\};R)$ for any  $z\in N'\cap X_n$.
 
 Next we observe that
$I^{\bar 0}H_n(X,X-N';R)\cong I^{\bar 0}H_n(X,X-\{x\};R)$ via the inclusion map since $X-N'$ is stratified homotopy equivalent to $X-\{x\}$ (see Appendix \ref{A: sphe}), and furthermore, $[\eta]\times \bar c\Gamma_L$ generates both groups. Similarly, it generates $I^{\bar 0}H_n(X,X-\{x'\};R)$ for any other $x'\in N'\cap X_{n-k}$. This is enough to guarantee that our orientation sheaf is locally constant along $N'\cap X_{n-k}$. But now also if $z$ is any point in the top stratum of $N'$, we have seen that  $o_z\in I^{\bar 0}H_n(X,X-\{z\};R)\cong R$ is also represented by $[\eta]\times\bar c \Gamma_L$ (and in a way that preserves the choice of orientation). But then by the induction hypotheses, this chain must also restrict to  $o_{z'}\in I^{\bar 0}H_n(X,X-\{z'\};R)\cong R$ for any $z'\in N'-N'\cap X_{n-k}$. So $\mc O^X$ is constant on $N'$, and  for $x'\in N'\cap X_{n-k}$, we can now  let $o_{x'}\in  I^{\bar 0}H_n(X,X-\{x'\};R)$ be the image of $[\eta]\cap \bar c\Gamma_L$. 

It now follows by induction that $\mc O^X$ must be locally constant. Furthermore, over sufficiently small distinguished neighborhoods $N'$,  we have found local sections that restrict to $o_x$ for each $x\in N'\cap X_n$. If $U,V$ are any two such open sets of $X$ with corresponding local sections $s_U,s_V$, then we see that $s_U$ and $s_V$ must therefore agree on $U\cap V\cap X_n$. But it follows from the local constancy that they must therefore also agree on all of $U\cap V$. Therefore, we can piece together the local sections that agree with the local orientation classes into a global section, and it follows that $\mc O^X$ is constant. 

\end{proof}

\subsection{Fundamental classes and global computations}\label{S: global o}

In this section, we provide the combined proofs of Theorems \ref{T: global o} 
and \ref{x1} assuming $X$ is normal and assuming Proposition \ref{P: local o} 
up through the depth of $X$. In order to prove Theorem \ref{x1} we need a
somewhat stronger version of Theorem \ref{T: global o}: 

\begin{proposition}\label{L: fundamental}
Suppose $X$ is a normal $n$-dimensional  $R$-oriented stratified 
pseudomanifold.
Then for any compact $K\subset X$, $I^{\bar 0}H_i(X, X-K;R)=0$ for $i>n$, and 
there is a unique class $\Gamma_K\in I^{\bar 0}H_n(X,X-K;R)$ such that for 
any $x\in K$, the image of $\Gamma_K$ in $X$ is the local orientation class   
$o_x\in I^{\bar 0}H_n(X,X-\{x\};R)$. Furthermore, if $\eta\in I^{\bar 0}H_n(X,X-K;R)$ is such that the image of $\eta$ in $I^{\bar 0}H_n(X,X-\{x\};R)$ is zero for all $x\in K$, then $\eta=0$. 
\end{proposition}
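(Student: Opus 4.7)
The plan is to follow the standard Hatcher-style bootstrap from the manifold case (cf.~\cite[Theorem~3.26]{Ha}), with Proposition~\ref{P: local o} supplying the pointwise input and the inductive hypothesis on depth supplying structure on links. Let $P(K)$ denote the conjunction of the three desired conclusions: (i) $I^{\bar 0}H_i(X, X-K;R)=0$ for $i>n$; (ii) there exists $\Gamma_K \in I^{\bar 0}H_n(X,X-K;R)$ whose image in $I^{\bar 0}H_n(X,X-\{x\};R)$ is $o_x$ for every $x \in K$; and (iii) any $\eta \in I^{\bar 0}H_n(X,X-K;R)$ whose image in $I^{\bar 0}H_n(X,X-\{x\};R)$ vanishes for every $x \in K$ is zero. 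Uniqueness of $\Gamma_K$ in (ii) is a formal consequence of (iii), and Proposition~\ref{P: local o} gives $P(\{x\})$ for every $x \in X$.

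The first step is a Mayer--Vietoris bootstrap: if $P(K_1)$, $P(K_2)$, and $P(K_1 \cap K_2)$ all hold, then $P(K_1 \cup K_2)$ holds. The open cover $\{X - K_1,\,X-K_2\}$ of $X - (K_1 \cap K_2)$ yields a relative Mayer--Vietoris sequence in intersection homology connecting $I^{\bar 0}H_*(X, X-K_i;R)$, $I^{\bar 0}H_*(X, X-(K_1 \cap K_2);R)$, and $I^{\bar 0}H_*(X, X-(K_1 \cup K_2);R)$. Vanishing in degrees $>n$ for $K_1 \cup K_2$ drops out immediately from the exact sequence. For existence, $\Gamma_{K_1}$ and $\Gamma_{K_2}$ each restrict over $K_1 \cap K_2$ to a class agreeing pointwise with $o_x$; by (iii) applied to $K_1 \cap K_2$, both equal $\Gamma_{K_1 \cap K_2}$, so they assemble via the Mayer--Vietoris boundary into a class $\Gamma_{K_1 \cup K_2}$ restricting correctly. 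Statement (iii) for $K_1 \cup K_2$ is a diagram chase in the same sequence using (iii) for $K_1$ and $K_2$.

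The second step is the local base case: $P(K)$ holds for any compact $K$ inside a distinguished neighborhood $N' \cong B_\delta \times \bar c_\epsilon L$ of the form constructed in the proof of Proposition~\ref{P: local o}, on which $\mc{O}^X$ is constant. By excision $I^{\bar 0}H_*(X, X-K;R) \cong I^{\bar 0}H_*(N', N'-K;R)$, and the relative K\"unneth theorem (Theorem~\ref{P: relative kunneth}) for the product $B_\delta \times \bar c_\epsilon L$ reduces matters to the pair $(\bar c_\epsilon L, \bar c_\epsilon L - K_L)$ for compact $K_L \subset \bar c_\epsilon L$. Using the cone formula (Proposition~\ref{P: cone}) together with the inductive hypothesis that Theorems~\ref{T: global o} and~\ref{x1} hold for the smaller-depth normal pseudomanifold $L$, I would establish $P(K)$ first for $K$ a single point, then for $K$ contained in the open regular part $N' \cap X_n$ (where the problem falls back to manifold theory), and then for $K_L$ meeting the cone vertex, where the explicit generator $[\eta] \times \bar c \Gamma_L$ from the proof of Proposition~\ref{P: local o} provides $\Gamma_K$. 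Step~1 then handles finite unions of these model cases inside $N'$.

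For an arbitrary compact $K \subset X$, cover $K$ by finitely many distinguished neighborhoods $N_1', \ldots, N_m'$ of the type used in Step~2 and choose compact $K_j \subset N_j'$ with $K = \bigcup_{j=1}^m K_j$. Induction on $m$ via the Mayer--Vietoris bootstrap reduces the general case to Step~2. I expect the main obstacle to be the base case: in Hatcher's manifold argument the analogue reduces to $K$ a compact convex subset of $\R^n$, where $(\R^n, \R^n - K) \simeq (\R^n, \R^n - \{0\})$ by a deformation retraction, but here the cone factor $\bar c_\epsilon L$ is genuinely singular at its vertex, so computing $I^{\bar 0}H_*(\bar c_\epsilon L, \bar c_\epsilon L - K_L; R)$ when $K_L$ contains the vertex (corresponding to $x$ lying in a singular stratum of $X$) requires a direct-limit argument over shrinking neighborhoods of $K_L$ combined with the cone formula and the inductive fundamental class $\Gamma_L$, and one must verify by hand that the resulting class is represented by the singular cone $\bar c \Gamma_L$.
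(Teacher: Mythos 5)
Your outline (Mayer--Vietoris bootstrap, reduction to compact subsets of a distinguished neighborhood, covering argument) matches the paper's strategy, but the second step --- the local base case --- has a genuine gap. The invocation of Theorem~\ref{P: relative kunneth} to ``reduce matters to the pair $(\bar c_\epsilon L, \bar c_\epsilon L - K_L)$'' is not legitimate for a general compact $K\subset B_\delta\times\bar c_\epsilon L$: the relative K\"unneth theorem applies to subsets of the form $(A\times Y)\cup(X\times B)$, which corresponds to $K$ being a product $K_1\times K_2$, and moreover that theorem is stated only for field coefficients whereas Proposition~\ref{L: fundamental} is over a general ring $R$. More seriously, your proposed ``model cases'' (a single point, a compact subset of the regular part, a set meeting the vertex) are not closed under intersection, so Step~1 cannot be applied to assemble them into finite unions --- the bootstrap requires $P(K_1\cap K_2)$ --- and finite unions of your model cases do not exhaust the compact subsets of $N'$ anyway. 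The closing paragraph acknowledges that the vertex case needs something more, but ``a direct-limit argument over shrinking neighborhoods of $K_L$'' does not supply the missing structure.

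The key idea in the paper's proof that your proposal lacks is a replacement for convexity adapted to $\R^{n-k}\times cL$: the notion of a \emph{PM-convex} set. These are of two explicit types --- $C\times([0,b]\times L)/\!\!\sim$ with $C\subset\R^{n-k}$ convex (containing the vertex), and $C\times[a,b]\times A$ with $a>0$, $A\subset L$ compact (avoiding the vertex). Their virtue is threefold: the intersection of two PM-convex sets is again PM-convex, so the Mayer--Vietoris bootstrap applies to finite unions; the first type is handled directly via the cone formula and stratified homotopy equivalence, and the second type by excising $\R^{n-k}\times\{v\}$ and inducting on depth; and arbitrarily small PM-convex sets cover compact subsets of $\R^{n-k}\times cL$. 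The last point is what permits the final Hatcher-style argument for an arbitrary compact $K$: given a relative cycle $z$ representing a class that vanishes pointwise, cover $K$ by finitely many small PM-convex sets disjoint from $|\partial z|$, let $P$ be their union, and show the resulting class in $I^{\bar 0}H_n(X,X-P;R)$ is already zero by the PM-convex case. Without a family of good sets that is simultaneously closed under intersection, directly computable, and cofinal locally, the base case cannot be completed, and that is where your proposal stops short.
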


\begin{proof}[Proof of Proposition \ref{L: fundamental}]
We first observe that if the proposition is true for compact sets $K$, $K'$, and $K\cap K'$, then it is true for $K\cup K'$. This follows from a straightforward Mayer-Vietoris argument exactly as it does for manifolds (see \cite[Lemma 3.27]{Ha}). Also analogously to the manifold case in \cite{Ha}, we can reduce to the situation where $X$ has the form of a distinguished neighborhood $X\cong \R^{n-k}\times cL^{k-1}$. To see this, we note that any compact $K\subset X$ can be written as the union of finitely many compact sets $K=K_1\cup \ldots \cup K_m$ with each $K_i$ contained in such a distinguished neighborhood in $X$. This can be seen by a covering argument using the compactness of $K$. Now, notice that $(K_1\cup\cdots \cup K_{m-1})\cap K_m=(K_1\cap K_m)\cup\cdots\cup(K_{m-1}\cap K_m)$ is also a union of $m-1$ compact sets each contained in a distinguished neighborhood, so the Mayer-Vietoris argument and an induction on $m$ reduces matters to the base case of a single $K$ inside a distinguished neighborhood. Then by excision, we can assume that $X=\R^{n-k}\times cL^{k-1}$. 

The next step in the classical manifold setting, as discussed in \cite{Ha}, would be to consider the case where $K$ is a finite union of convex sets in $\R^n$. This  is not available to us in an obvious way. However, let us define a compact set in $X=\R^{n-k}\times cL^{k-1}$ to be PM-convex (``PM'' for pseudomanifold) if either 

\begin{enumerate}
\item it has the form $C\times ( [0,b]\times L)/\sim$, where $C$ is a convex set in $\R^{n-k}$, and $( [0,b]\times L)/\sim$ is part of the cone on $L$, including the vertex, or

\item it has the form $C\times [a,b] \times A $, where $C$ is a convex set in $\R^{n-k}$, $A$ is a compact subset of $L$, and $[a,b]$ is an interval along the cone line with $a>0$.

\end{enumerate}

It is clear that the intersection of any two PM-convex sets is also a PM-convex set, so another Mayer-Vietoris argument and induction allows us to reduce to the case of a single PM-convex set.  For PM-convex sets of the second type, we can use an excision argument to cut $\R^{n-k}\times \{v\}$ (where $v$ is the cone vertex) out of $I^{\bar 0}H_*(X,X-K;R)$ and then appeal to an induction on depth. 
For a PM-convex set $K$ of the first type, computations exactly as in the proof of Proposition \ref{P: local o} show that there is a class $\Gamma_K$ with the desired restrictions to $I^{\bar 0}H_n(X,X-\{x\};R)$ for each $x\in K$. Now, by stratified homotopy equivalence (see Appendix \ref{A: sphe}), $I^{\bar 0}H_*(X,X-K;R)\cong I^{\bar 0} H_*(X,X-\{x\};R)$ for any $x\in C\times v$, and by the usual computations then $I^{\bar 0}H_*(X,X-K;R)\cong I^{\bar 0}H_*(X,X-\{x\};R)\cong I^{\bar 0}H_{*-{n-k}-1}(L;R)$, which is $0$ for $*>n$ and $R$ for $*=n$. Therefore, any other element of $I^{\bar 0}H_n(X,X-K;R)$ that is not $\Gamma_K$ cannot yield the correct generator of  $I^{\bar 0}H_n(X,X-\{x\};R)$ upon restriction, and so $\Gamma_K$ is unique. Furthermore, we see that if $\eta\in I^{\bar 0}H_n(X,X-K;R)$ restricts to $0$ in $I^{\bar 0}H_n(X,X-\{x\};R)$ for any $x\in C\times v\subset K$ then $\eta=0$ (and so certainly $\eta=0$ if $\eta$ goes to $0$ in $I^{\bar 0}H_n(X,X-\{x\};R)$ for every $x\in K$).

Next, we consider an arbitrary compact $K$ in $\R^{n-k}\times cL^{k-1}$ and again follow the general idea from \cite{Ha}. For the existence of a $\Gamma_K$, let $\Gamma_K$ be the image in $I^{\bar 0}H_n(X,X-K;R)$ of any $\Gamma_D$, where $D$ is any PM-convex set sufficiently large to contain $K$. It is clear that such a $D$ exists using that our space has the form $\R^{n-k}\times cL^{k-1}$. By applying the results of the preceding paragraph for the PM-convex case, we see that $\Gamma_K$ has the desired properties. To show that $\Gamma_K$ is unique, suppose that $\Gamma'_K\in I^{\bar 0}H_n(X,X-K;R)$ is another class with the desired properties. Suppose $z$ is a relative cycle representing $\Gamma_K-\Gamma_{K}'\in I^{\bar 0}H_n(X,X-K;R)$. Let $|\bd z|$ be the support of $\bd z$, which lies in $X-K$. Since $|\bd z|$ is also compact, we can cover $K$ by a finite number of sufficiently small PM-convex sets that do not intersect $|\bd z|$. Let $P$ denote the union of these PM-convex sets. The relative cycle $z$ defines an element $\alpha\in I^{\bar 0}H_n(X,X-P;R)$ that maps by inclusion to $\Gamma_K-\Gamma'_K\in I^{\bar 0}H_i(X,X-K;R)$. So $\alpha$ is $0$ in $I^{\bar 0}H_n(X,X-\{x\};R)$ for all $x\in K$. This implies that  the image of $\alpha$ is also $0$ in $I^{\bar 0}H_n(X,X-\{x\};R)$ for all $x$ in $P$. To see this, note that any $x\in P$ is in the same PM-convex set, say $Q$, as some $y\in K$. But then by the preceding paragraph, $I^{\bar 0}H_n(X,X-\{x\};R)\cong  I^{\bar 0}H_n(X,X-Q;R)\cong I^{\bar 0}H_n(X,X-\{y\};R)$.  But now  we must have $\alpha=0$ in $I^{\bar 0}H_n(X,X-P;R)$ since $P$ is a finite union of PM-convex sets. Hence $\Gamma_K-\Gamma'_K=0$. This implies the uniqueness of the class $\Gamma_K$. The same arguments show that if $\eta\in I^{\bar 0}H_n(X,X-K;R)$ goes to $0$ in $I^{\bar 0}H_n(X,X-\{x\};R)$ for each $x\in K$, then $\eta=0$. 

Finally, to see that $I^{\bar 0}H_i(X,X-K;R)=0$ for $i>n$, again let 
$z$ be a relative cycle representing an element $\xi\in I^{\bar 0}H_i(X,X-K;R)$, $i>n$. We can form $P$ exactly as in the preceding paragraph. Once again, the relative cycle $z$ defines an element $\alpha\in I^{\bar 0} H_i(X,X-P;R)$ that maps by inclusion to $\xi\in I^{\bar 0}H_i(X,X-K;R)$. But now if $i>n$, then by the preceding results, $\alpha=0$ since $P$ is PM-convex. Thus also $\xi=0$. 
\end{proof}

For $X$ normal, we can now complete the proof of Theorems \ref{T: global o} and
\ref{x1}.  Theorem \ref{T: global o} follows directly from Proposition 
\ref{L: fundamental}, as well as the first part of  Theorem \ref{x1}  by 
taking $K=X$ if $X$ is compact.

We need to see that if $X$ is compact and connected then $I^{\bar 0} H_n(X;R)\cong R$, generated by $\Gamma_X$. For any $x\in X$, we have a
homomorphism $I^{\bar 0}H_n(X;R)\to I^{\bar 0}H_n(X, X-\{x\};R)\cong R$, which
we know is surjective, sending $\Gamma_X$ onto a local orientation class, by
Proposition \ref{L: fundamental}.  On the other hand, suppose that $\eta\in
I^{\bar 0}H_n(X;R)$ goes to $0\in I^{\bar 0}H_n(X, X-\{x\};R)$ for some $x\in
X$. Since $x\to \im(\eta)\in I^{\bar 0}H_n(X,X-\{x\};R)$ is a section of the
locally-constant orientation sheaf, this implies that $\im(\eta)=0\in I^{\bar 0} H_n(X,X-\{x\};R)$ for all $x\in X$ (since $X$ is connected). But then
$\eta=0$ by Proposition \ref{L: fundamental}.
Thus for any $x\in X$, the homomorphism $I^{\bar 0}H_n(X;R)\to I^{\bar 0}H_n(X, X-\{x\};R)\cong R$ is an isomorphism. If $X$ has multiple compact normal connected components, the rest of the theorem follows by noting that $I^{\bar 0}H_n(X;R)$ is the direct sum over the connected components and by piecing together the results for the individual components.\hfill\qedsymbol

\subsection{\ref{P: abnormal local o}, \ref{T: global o} and \ref{x1}
for general pseudomanifolds}\label{x2}

\begin{proof}[Proof of Proposition \ref{P: abnormal local o}] If $X$ is not necessarily normal, let $\pi:\hat X\to X$ be its normalization. By Lemma \ref{L: normal} of the Appendix \ref{A: normal} (which extends  results well-known for Goresky-MacPherson perversities), $\pi$ induces isomorphisms $I^{\bar 0}H_*(\hat X,\hat X-\pi^{-1}(\bar U);R)\to I^{\bar 0}H_*( X,X-\bar U;R)$. Proposition
\ref{P: local o} then  implies that $I^{\bar 0}H_i( X,X-\{x\};R)=0$ for $i\neq n$ and that $\mc O^X\cong \pi_*\mc O^{\hat X}$. We thus obtain our desired global section of $\mc O^X$  from the preferred global section of $\mc O^{\hat X}$ using the general sheaf theory fact  $\Gamma(X;\pi_*\mc O^{\hat X})=\Gamma(\hat X;\mc O^{\hat X})$. The formula for $I^{\bar 0}H_n( X,X-\{x\};R)$ also follows from basic sheaf theory. 
\end{proof}

Finally, we prove Theorems \ref{T: global o} and \ref{x1} for $X$ not necessarily normal.

\begin{proof}
If $X$ is not necessarily normal, once again there is a map  $\pi:\hat X\to
X$ such that $\hat X$ is normal, $\pi$ restricts to a homeomorphism from $\hat
X-\hat X^{n-1}$ to $X_n=X-X^{n-1}$, and $\pi$ induces an isomorphism on
intersection homology by Lemma \ref{L: normal}. In addition, the number of
connected components of $\hat X$ is equal\footnote{Since $\hat X-\hat
X^{n-1}\cong X-X^{n-1}$ and $\hat X-\hat X^{n-1}$ is dense in $\hat X$, the
number of connected components of $\hat X$ is less than or equal to the number
of components of $X-X^{n-1}$. But each connected normal stratified pseudomanifold has only one regular stratum \cite[Lemma 2.1]{Pa03}, so this must in fact be an equality.} to the number of connected components of $X-X^{n-1}$. Notice that if $Z$ is such a component of $X-X^{n-1}$, $\pi$ restricts to a normalization map from the closure of $\hat Z:=\pi^{-1}(Z)$ in $\hat X$ to the closure of $Z$ in $X$, which is also a stratified pseudomanifold (as follows from a local argument via an induction on depth). Also, if $K$ is a compact subset of $X$, then $\hat K=\pi^{-1}(K)$ is compact since $\pi$ is proper, and $\pi$ restricts to the normalization $\hat X-\hat K\to X-K$ (see Proposition 2.5 and Theorem 2.6 of \cite{Pa03}). 

So $\pi$ also induces isomorphisms $I^{\bar 0}H_*(\hat X,\hat X-\hat K;R)\to I^{\bar 0}H_*(X,X-K;R)$, and since $\hat X$ is normal, our preceding results yield a unique fundamental class $\hat \Gamma_K\in I^{\bar 0}H_n(\hat X,\hat X-\hat K;R)$ and show that $I^{\bar 0}H_i(\hat X,\hat X-\hat K;R)=0$  for $i>n$. But the isomorphism $\pi$ on intersection homology then shows that $I^{\bar 0}H_i( X, X- K;R)=0$ for $i>n$ and provides a  class $\pi\hat \Gamma_K\in I^{\bar 0}H_n( X, X- K;R)$. Let us see that $\pi\hat \Gamma_K$ has the desired properties for it to be $\Gamma_K$. 

By taking sufficiently small distinguished neighborhoods around $x\in X$, letting $\pi^{-1}(x)=\{y_1,\ldots, y_m\}$, and excising, we have the following commutative diagram (coefficients tacit):
{\scriptsize\begin{diagram}[LaTeXeqno]\label{D: restrict}
I^{\bar 0}H_n(\hat X,\hat X-\hat K)&\rTo&I^{\bar 0}H_n(\hat X,\hat X-\pi^{-1}(x))&\lTo^\cong &I^{\bar 0}H_n(\hat N, \hat N-\pi^{-1}(x))\cong \oplus_{i=1}^m I^{\bar 0}H_n(\hat N_i,  \hat N_i-y_i)&\rTo^\cong& \oplus_{i=1}^m R\\
\dTo^\pi_\cong&&\dTo^\pi_\cong&&\dTo^\pi_\cong&&\dTo_\cong\\
I^{\bar 0}H_n(X,X-K)&\rTo&I^{\bar 0}H_n(X,X-x)&\lTo^\cong &I^{\bar 0}H_n(N,  N-x)&\rTo^\cong &\oplus_{i=1}^m R.\\
\end{diagram}}
So by the definition of the local orientation class $o_x$, we see that the image of $\pi\hat \Gamma_K$ in $I^{\bar 0}H_n( X, X- \{x\};R)$ is precisely $o_x$ since $\hat \Gamma_K$ restricts to the local orientation class in each 
$I^{\bar 0}H_n( X, X- \{y_i\};R)$. Restricting this same argument to $\bar Z$ and $cl(\hat Z)$ demonstrates that the image of $\Gamma_{cl(\hat Z)}$ must be $\Gamma_{\bar Z}$.

To see that $\Gamma_K$ is unique, let $\Gamma_K'$ be another class with the
desired properties. Then $\Gamma_K$ and $\Gamma_{K'}$ each correspond to unique
elements $\hat \Gamma_K$ and $\hat \Gamma_{K'}$ in $I^{\bar 0}H_n(\hat X,\hat
X-\hat K;R)$. But now using diagram \eqref{D: restrict} again, we see that
$\hat \Gamma_K$ and $\hat \Gamma_K'$ must each restrict to the local orientation class of $I^{\bar 0}H_n(\hat X,\hat X-\{y\};R)$ for each $y\in \hat K$ or else
their images in  $I^{\bar 0}H_n(X,X-\{x\};R)$ will not be correct. But by the
uniqueness for normal stratified pseudomanifolds, which we obtained in
Proposition \ref{L: fundamental}, this implies $\hat \Gamma_K=\hat \Gamma_K'$, and so $\Gamma_K=\Gamma_K'$.

This completes the proof of Theorem \ref{T: global o}.

If $X_n$ is connected and we 
take $K=X$, then  Theorem \ref{x1} follows immediately from the isomorphism 
$I^{\bar 0}H_*(\hat X; R)\cong I^{\bar 0}H_*(X; R)$.  

If $X_n$ is not connected, let $\bar Z$ be the closure of $Z$ in $X$, let $\text{cl}(\hat Z)$ be the closure of $\hat Z$ in $\hat X$, and notice $\hat X$ is the disjoint union of the connected components $\hat X=\amalg \text{cl}(\hat Z)$. So then

\begin{diagram}[LaTeXeqno]\label{D: pieces}
\oplus_Z I^{\bar 0}H_n(\text{cl}(\hat Z);R) &\rTo^{\cong} & I^{\bar 0}H_n(\hat X;R)\\
\dTo^\pi_{\cong}&&\dTo^\pi_{\cong}\\
\oplus_Z I^{\bar 0}H_n(\bar Z;R) &\rTo^{\cong} & I^{\bar 0}H_n(\hat X;R).
\end{diagram}
The top map is an isomorphism because the spaces $\text{cl}(\hat Z)$ are disjoint. The vertical maps are normalization isomorphisms, and it follows that the bottom is an isomorphism. Thus it follows from the normal case that $I^{\bar 0}H_i(\hat X;R)=0$ for $i>n$ and that $I^{\bar 0}H_n(\hat X;R)\cong R^m$, where $m$ is the number of connected components of $X-X^{n-1}$. 

 The remainder of the Theorem \ref{x1} follows since our earlier arguments imply that the image of $\Gamma_{\text{cl}(\hat Z)}\in I^{\bar 0}H_n(\text{cl}(\hat Z);R)$ under the normalization 
 $\pi|_{\text{cl}(\hat Z)}:\text{cl}(\hat Z)\to \bar Z$ is $\Gamma_{\bar Z}\in I^{\bar 0}H_n(\bar Z;R)$.  
\end{proof}

\subsection{Corollaries and complements}\label{x3}

For later use we record some further properties of the fundamental class.

\begin{corollary}\label{T: uniqueness of fund. class}
Suppose $X$ is a compact connected normal $n$-dimensional $R$-oriented 
stratified pseudomanifold. If $\gamma\in I^{\bar 0}H_n(X;R)\cong R$ is a 
generator, then $\gamma$ is the fundamental class of $X$ with respect to some 
orientation of $X$. 
\end{corollary}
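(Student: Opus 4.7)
The plan is to exhibit an explicit $R$-orientation of $X$ with respect to which $\gamma$ is the fundamental class, obtained by rescaling the given orientation by the unit that relates $\gamma$ to $\Gamma_X$.

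First, by Theorem \ref{x1} applied to the connected normal pseudomanifold $X$, $I^{\bar 0}H_n(X;R)\cong R$ is generated by the fundamental class $\Gamma_X$ of the given orientation. Thus $\gamma=u\Gamma_X$ for some $u\in R$, and because $\gamma$ is itself a generator, $u$ must be a unit of $R$. The goal is therefore to produce an orientation $o'$ on $X$ such that its fundamental class is exactly $u\Gamma_X$.

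Next, recall that an $R$-orientation of $X$ is by definition an $R$-orientation of the manifold $X_n=X-X^{n-1}$, i.e.\ a continuous (locally constant) choice of generator $o_x\in H_n(X_n,X_n-\{x\};R)$ for each $x\in X_n$. Given the original orientation $o$, define $o'_x=u\cdot o_x$ for each $x\in X_n$. Since $u$ is a unit, each $o'_x$ is again a generator of the stalk, and the local constancy of $o'$ is inherited from that of $o$; hence $o'$ is an $R$-orientation of $X$. I would then verify that the local orientation classes of $o'$ on all of $X$ (including the singular strata) are given by scaling: using Proposition \ref{P: local o}, the orientation sheaf $\mathcal O^X$ is constant, and its global section associated with $o'$ is obtained by multiplying the global section associated with $o$ by $u$. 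Thus, in the notation of Definition \ref{D: normal local class}, the local orientation class of $o'$ at any $x\in X$ is $o'_x=u\cdot o_x$.

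Finally, I would apply the uniqueness part of Proposition \ref{L: fundamental} (equivalently Theorem \ref{T: global o}) with $K=X$ to identify $\gamma$ as the fundamental class of $o'$. Indeed, for every $x\in X$, the image of $\gamma=u\Gamma_X$ in $I^{\bar 0}H_n(X,X-\{x\};R)$ equals $u$ times the image of $\Gamma_X$, which is $u\cdot o_x=o'_x$. Since $\gamma$ restricts to the $o'$-local orientation class at every point of $X$, the uniqueness clause forces $\gamma$ to be the fundamental class with respect to $o'$. The only subtle point, which I would spell out carefully, is the claim that scaling the orientation on $X_n$ by a unit extends to a scaling of the local orientation classes at singular points; this is a consequence of Proposition \ref{P: local o} and the fact that, in the proof of that proposition, the local class at a singular point $x$ is constructed from the nearby classes on the top stratum by a fixed formula (involving $[\eta]\times\bar c\Gamma_L$), which is obviously $R$-linear in the chosen orientation.
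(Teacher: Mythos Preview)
Your proof is correct and follows essentially the same approach as the paper's: write $\gamma=u\Gamma_X$ for a unit $u$, rescale the given orientation by $u$, and observe that $\gamma$ restricts to the new local orientation classes. The paper's proof is considerably terser---it simply asserts that $\gamma$ is the fundamental class for the orientation obtained by multiplying the global section by $r$---whereas you spell out more carefully the extension to singular points and invoke the uniqueness clause explicitly; but the underlying argument is the same.
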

\begin{proof}
If $\Gamma_X$ is the fundamental class of $X$ with respect to the given orientation, then clearly $\gamma=r\Gamma_X$ for some unit $r\in R$. Thus the image of $\gamma$ in any $I^{\bar 0}H_n(X,X-\{x\};R)$ is $r$ times the image of $\Gamma_X$ in $I^{\bar 0}H_n(X,X-\{x\};R)$. Thus $\gamma$ is the fundamental class corresponding to the global orientation section obtained from that corresponding to $\Gamma_X$ by multiplication by $r$. 
\end{proof}

\begin{corollary}\label{T: fund id2}
Suppose $X$ is a compact connected $n$-dimensional $R$-oriented stratified 
pseudomanifold. Let $\{x_i\}$ be a collection of points of $X$, one in each 
connected component of $X-X^{n-1}$. If $\gamma\in I^{\bar 0}H_n(X;R)$ restricts
to $o_{x_i}\in I^{\bar 0}H_n(X, X-\{x_i\};R)$ 
for each $x_i$, then $\gamma=\Gamma_X$. More generally, given 
that $X$ is orientable, any element of $I^{\bar 0}H_n(X;R)$ that restricts to 
a generator of each $I^{\bar 0}H_n(X, X-\{x_i\};R)$ determines an orientation 
of $X$. 
\end{corollary}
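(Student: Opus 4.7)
The plan is to pin down $\gamma$ by expanding it in the stratum-by-stratum decomposition of Theorem \ref{x1} and then reading off each coefficient by restricting to a single point. By Theorem \ref{x1} the natural map
\[
\bigoplus_Z I^{\bar 0}H_n(\bar Z;R)\xrightarrow{\ \cong\ } I^{\bar 0}H_n(X;R)
\]
(sum over regular strata) is an isomorphism, with each summand free of rank one on $\Gamma_{\bar Z}$. I would index the regular strata as $\{Z_i\}$ so that $x_i\in Z_i$ and write $\gamma=\sum_i r_i\,\Gamma_{\bar Z_i}$ with $r_i\in R$; the goal reduces to showing that each $r_i=1$.

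The core step is a local computation of how each $\Gamma_{\bar Z_j}$ restricts to $I^{\bar 0}H_n(X,X-\{x_i\};R)$. Since $Z_j$ is a connected component of the open subset $X_n=X-X^{n-1}$, it is both open and closed in $X_n$, so $\bar Z_j\cap X_n=Z_j$; consequently, for $j\neq i$ the point $x_i\in Z_i\subset X_n$ does not lie in $\bar Z_j$, and any chain supported in $\bar Z_j$ restricts to zero in $I^{\bar 0}H_n(X,X-\{x_i\};R)$. For $j=i$, I would pick a Euclidean neighborhood $U\subset Z_i$ of $x_i$; excision identifies both $I^{\bar 0}H_n(\bar Z_i,\bar Z_i-\{x_i\};R)$ and $I^{\bar 0}H_n(X,X-\{x_i\};R)$ with $H_n(U,U-\{x_i\};R)\cong R$, and under these identifications the local orientation class of $\bar Z_i$ (namely the restriction of $\Gamma_{\bar Z_i}$, by Proposition \ref{L: fundamental} together with Proposition \ref{P: abnormal local o}) matches $o_{x_i}$, because the orientation of $\bar Z_i$ is induced from that of $X$. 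Hence the restriction of $\gamma$ at $x_i$ equals $r_i\,o_{x_i}$.

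Combining these observations with the hypothesis forces $r_i=1$ for every $i$; applying the same calculation to $\Gamma_X$ itself (which satisfies the hypothesis by Theorem \ref{T: global o}) yields $\Gamma_X=\sum_i\Gamma_{\bar Z_i}$, and therefore $\gamma=\Gamma_X$. For the more general statement, the identical calculation gives $r_i=u_i$ whenever $\gamma$ restricts to the generator $u_i o_{x_i}$ at $x_i$ with $u_i\in R^\times$; rescaling the given orientation on each regular stratum $Z_i$ by the unit $u_i$ produces a new $R$-orientation of $X_n$, and by Theorem \ref{T: global o} applied to this new orientation the corresponding fundamental class is $\sum_i u_i\,\Gamma_{\bar Z_i}=\gamma$. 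The new orientation is determined by $\gamma$ since it is read off from the restrictions $u_i o_{x_i}$.

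The step requiring the most care is making sure the argument works without assuming normality: multiple closures $\bar Z_j$ may overlap in $X^{n-1}$, but the only facts used are $\bar Z_j\cap X_n=Z_j$ and excision into the top stratum (together with the identification of fundamental classes on $\bar Z_i$ coming from Section \ref{x2}), so normality is not needed.
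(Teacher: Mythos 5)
Your proof is correct and takes the same route as the paper: both hinge on the decomposition $\bigoplus_Z I^{\bar 0}H_n(\bar Z;R)\cong I^{\bar 0}H_n(X;R)$ from Theorem \ref{x1} together with the observation that restriction to $I^{\bar 0}H_n(X,X-\{x_i\};R)$ reads off the $i$-th coordinate, which you carry out explicitly while the paper's published proof compresses this into a reference to diagram \eqref{D: pieces} and the preceding corollary. One small citation point: the fact that $\Gamma_{\bar Z_i}$ restricts to the local orientation class at $x_i$ is Theorem \ref{T: global o} applied to $\bar Z_i$ rather than Proposition \ref{L: fundamental}, since the latter assumes normality and $\bar Z_i$ need not be normal.
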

\begin{proof}
First assume $X$ is given an orientation and hence has a fundamental class $\Gamma_X$. We know $\Gamma_X$ has the desired property. It is clear from Theorem \ref{T: global o} and diagram \eqref{D: pieces}, that no other element of  $I^{\bar 0}H_n(X;R)$ can have this property, since, as in the proof of the preceding corollary, any other element of   $I^{\bar 0}H_n(X;R)$ would have to restrict to a different element of $I^{\bar 0}H_n(X, X-\{x_i\};R)$ for at least one of the $x_i$. 

Conversely,  an element of $I^{\bar 0}H_n(X;R)$ that restricts to a generator of each $I^{\bar 0}H_n(X, X-\{x_i\};R)$ determines a local orientation at each $x_i$. But since $X$ is orientable, any local orientation at one point of each  regular stratum determines an orientation of $X$. 
\end{proof}

Our next result will be needed in \cite{GBF31}.  It utilizes the definition of
stratified homotopy equivalence given in Appendix \ref{A: sphe}.

\begin{corollary}
Suppose $X$ and $Y$  are  compact  $n$-dimensional  stratified pseudomanifolds, that $X$ is $R$-oriented, and that  $f:X\to Y$ is a stratified  homotopy equivalence. Then $Y$ is orientable and $f$ takes $\Gamma_X$ to $\Gamma_Y$ for some orientation of $Y$. 
\end{corollary}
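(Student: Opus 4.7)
My plan is to set $\Omega := f_*(\Gamma_X) \in I^{\bar 0}H_n(Y;R)$ and verify that $\Omega$ restricts to a generator of $I^{\bar 0}H_n(Y, Y-\{y_W\}; R) \cong R$ at one chosen point $y_W$ in each connected component $W$ of $Y_n := Y - Y^{n-1}$; by Corollary \ref{T: fund id2} (applied to each connected component of $Y$), this simultaneously establishes orientability of $Y$ and gives $\Omega = \Gamma_Y$. Note that $f_*$ is an isomorphism on intersection homology by the stratified homotopy invariance (Appendix \ref{A: sphe}), so $\Omega$ is well defined.

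First I would fix a stratified homotopy inverse $g: Y \to X$. Since stratified maps carry regular strata into regular strata, both $f$ and $g$ restrict to ordinary maps $f_n: X_n \to Y_n$ and $g_n: Y_n \to X_n$; stratified homotopies preserve each point's stratum over time, so the homotopies $fg \simeq \mathrm{id}_Y$ and $gf \simeq \mathrm{id}_X$ restrict to ordinary homotopies on the top strata, making $f_n$ and $g_n$ mutually inverse homotopy equivalences of $n$-manifolds. For a component $W$ of $Y_n$, I would choose $y_W \in W$ and set $x_W := g(y_W) \in X_n$; then $f(x_W) = fg(y_W)$ lies again in $W$, because the restricted homotopy gives a path from $f(x_W)$ to $y_W$ inside $Y_n$.

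The core step is the naturality square
\begin{diagram}
I^{\bar 0}H_n(X;R) & \rTo^{f_*} & I^{\bar 0}H_n(Y;R) \\
\dTo & & \dTo \\
I^{\bar 0}H_n(X, X-\{x_W\};R) & \rTo^{f_*} & I^{\bar 0}H_n(Y, Y-\{f(x_W)\};R).
\end{diagram}
The left column sends $\Gamma_X$ to the local orientation class $o_{x_W}$, a generator of $R$. Since $x_W \in X_n$ and $f(x_W) \in Y_n$, excision identifies both bottom groups with manifold local homology groups on $X_n$ and $Y_n$, under which the bottom map becomes $(f_n)_*$; this is an isomorphism because $f_n$ is a homotopy equivalence of $n$-manifolds, so $\Omega$ restricts to a generator of $I^{\bar 0}H_n(Y, Y-\{f(x_W)\}; R)$. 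The orientation sheaf of the manifold $Y_n$ is locally constant with stalk $R$, so the restrictions of $\Omega$ at $f(x_W)$ and at $y_W$ are related by the $R$-module isomorphism of parallel transport within the component $W$. Hence the restriction at $y_W$ is also a generator, and Corollary \ref{T: fund id2} finishes the argument.

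The main technical hurdle is confirming the stratum-preservation and homotopy-restriction properties of stratified maps: specifically, that $f$ and $g$ send regular strata to regular strata and that stratified homotopies restrict to ordinary homotopies on the top stratum. These properties should be immediate from the definition of stratified homotopy equivalence given in Appendix \ref{A: sphe}, but they are the only non-formal ingredient of the argument.
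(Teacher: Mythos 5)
The naturality square at the heart of your argument contains a map
\[
f_*\colon I^{\bar 0}H_n(X, X-\{x_W\};R) \longrightarrow I^{\bar 0}H_n(Y, Y-\{f(x_W)\};R)
\]
that is not defined in general: for $f$ to give a map of pairs $(X, X-\{x_W\}) \to (Y, Y-\{f(x_W)\})$ one needs $f(X-\{x_W\}) \subset Y - \{f(x_W)\}$, i.e.\ $f^{-1}(f(x_W)) = \{x_W\}$, and a stratified homotopy equivalence need not be injective at $x_W$. Even if one patches the square (say by replacing $\{x_W\}$ with the compact set $f^{-1}(f(x_W))$), the further claim that the resulting map is an isomorphism ``because $f_n$ is a homotopy equivalence of $n$-manifolds'' is not correct: a (non-proper) homotopy equivalence between open $n$-manifolds does not act by isomorphisms on local homology. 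For instance $z \mapsto z^2$ on $\mathbb{C}$ is a homotopy equivalence with $f^{-1}(0)=\{0\}$, yet induces multiplication by $2$ on $H_2(\mathbb{C},\mathbb{C}-\{0\};\mathbb{Z})$. There is also a circularity at the end: Corollary~\ref{T: fund id2} requires orientability of $Y$ as a hypothesis, so it cannot be the thing that ``simultaneously establishes orientability.''

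The paper avoids all of this by passing to the normalization $\pi\colon \hat X \to X$. Padilla's functoriality gives $\hat f\colon \hat X \to \hat Y$, and $\hat X$ is the \emph{disjoint union} of the normalizations $\text{cl}(\hat Z)$ of the closures of the regular strata, each a compact connected oriented normal pseudomanifold with $I^{\bar 0}H_n(\text{cl}(\hat Z);R) \cong R$. Any continuous map respects the decomposition of a space into connected components, so $\hat f_*$ is automatically block-diagonal with respect to the rank-one summands; an invertible $R$-module map $R \to R$ sends a generator to a generator, and no local analysis or degree computation is needed. Corollary~\ref{T: uniqueness of fund.\ class} (which requires normality) then identifies each image as a fundamental class for some orientation. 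If you wish to avoid normalization, one workable route is to observe that $f$, being a closed map, satisfies $f(\bar Z) = \overline{f(Z)} \subset \bar S$, so $f_*$ does in fact respect the decomposition of Theorem~\ref{x1}(2) directly; but the naturality square argument as written does not close the gap.
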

\begin{proof}
By Padilla \cite[Theorem 2.6]{Pa03}, normalization is functorial, so we have a diagram
\begin{diagram}
I^{\bar 0}H_n(\hat X;R) &\rTo^{\hat f} &I^{\bar 0}H_n(\hat Y;R)\\
\dTo^{\pi_X}&&\dTo_{\pi_Y}\\
I^{\bar 0}H_n( X;R) &\rTo^{f} &I^{\bar 0}H_n(Y;R).
\end{diagram}
The bottom map is an isomorphism by the invariance of intersection homology under stratified homotopy (see Appendix \ref{A: sphe}), and the vertical maps are isomorphisms by
normalization. Hence the top map is also an isomorphism. Borrowing the notation
from above, since each connected component $\text{cl}(\hat Z)$ of $\hat X$ is a
compact connected oriented normal stratified pseudomanifold, with its orientation coming from that of the stratum $Z\subset X$, we have each $I^{\bar 0}H_n(\text{cl}(\hat Z);R)\cong R$, and the fundamental class 
$\Gamma_{\hat X}$ determined by $\Gamma_X$ is a sum of generators of the separate 
$I^{\bar 0}H_n(\text{cl}(\hat Z);R)$. Since $X$ and $Y$ are stratified homotopy equivalent, there is a bijection between connected components of $X-X^{n-1}$ and $Y-Y^{n-1}$, and $f$ induces homotopy equivalences of these manifolds. It follows that $Y$ must be orientable and  that $\hat f(\Gamma_{\hat Y})$ must similarly be  a sum of generators of the corresponding $I^{\bar 0}H_n(\text{cl}(\hat S);R)$ for regular strata $S\subset Y$. By Corollary \ref{T: uniqueness of fund. class}, these generators must be fundamental classes for the $\text{cl}(\hat S)$ with respect to some orientation on $\hat Y$. This determines an orientation of $Y$ by the homeomorphism 
$\hat Y-\hat Y^{n-1}\cong Y-Y^{n-1}$, and, it follows from the diagram that 
$f(\Gamma_X)=\pi_Y\hat f(\Gamma_{\hat X})$ is the corresponding fundamental class on $Y$.
\end{proof}

We next present a result  that will be needed at one point in
Section \ref{j4}.  First observe that if $M$ is an $R$-oriented manifold and 
$X$ is an $R$-oriented
stratified pseudomanifold of dimension $n$ there is 
a canonical $R$-orientation on $M\times X$ (namely the product orientation on 
$M\times X_n$; see \cite[VIII.2.13]{Dold}).

\begin{proposition}\label{j8}
Let $M$ be an $R$-oriented manifold and let $X$ be an $R$-oriented stratified 
pseudomanifold. Let $K_1\subset
M$ and $K_2\subset X$ be compact. Then $\Gamma_{K_1}\times
\Gamma_{K_2}=\Gamma_{K_1\times K_2}$ in $I^{\bar 0}H_*(M\times X,M\times
X-K_1\times K_2)$.
\end{proposition}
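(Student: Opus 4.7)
The plan is to invoke the uniqueness clause of Theorem \ref{T: global o}: the class $\Gamma_{K_1\times K_2}$ is uniquely characterized as the element of $I^{\bar 0}H_{m+n}(M\times X,M\times X-K_1\times K_2;R)$ (with $m=\dim M$) that restricts to the local orientation class $o_{(x_1,x_2)}$ for every $(x_1,x_2)\in K_1\times K_2$. So it suffices to verify that $\Gamma_{K_1}\times \Gamma_{K_2}$ enjoys this property.

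By naturality of the chain-level cross product applied to the inclusions of pairs $(M,M-K_1)\hookrightarrow(M,M-\{x_1\})$ and $(X,X-K_2)\hookrightarrow(X,X-\{x_2\})$, together with the inclusion $(M\times X,M\times X-K_1\times K_2)\hookrightarrow(M\times X,M\times X-\{(x_1,x_2)\})$, the image of $\Gamma_{K_1}\times \Gamma_{K_2}$ in $I^{\bar 0}H_{m+n}(M\times X,M\times X-\{(x_1,x_2)\};R)$ equals $o_{x_1}\times o_{x_2}$. Hence the proposition reduces to the purely local identity $o_{x_1}\times o_{x_2}=o_{(x_1,x_2)}$ at each $(x_1,x_2)\in M\times X$.

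For this identity, I would first treat the case $x_2\in X_n=X-X^{n-1}$. In that case $(x_1,x_2)$ lies in the regular stratum of $M\times X$, so by excision each of the three local orientation groups reduces to an ordinary local orientation group of the manifolds $M$, $X_n$, and $M\times X_n$, and the identity is precisely the classical product formula for manifold orientations \cite[VIII.2.13]{Dold}. For the general case I would invoke the normalization $\pi\colon\hat X\to X$. The map $1\times\pi\colon M\times\hat X\to M\times X$ is finite-to-one, proper, and restricts to a homeomorphism $M\times(\hat X-\hat X^{n-1})\to M\times(X-X^{n-1})$, so by uniqueness of the normalization \cite{Pa03} it is the normalization of $M\times X$. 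The induced $R$-orientation on $M\times\hat X$ is the product orientation, and Definition \ref{D: non-normal local class} together with the regular-stratum case yields
\[
o_{(x_1,x_2)}=\sum_{y\in\pi^{-1}(x_2)}(1\times\pi)_*\bigl(o_{(x_1,y)}\bigr)=\sum_y(1\times\pi)_*(o_{x_1}\times o_y)=o_{x_1}\times\sum_y\pi_*(o_y)=o_{x_1}\times o_{x_2}.
\]

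The main obstacle I anticipate is bookkeeping around the normalization: one must verify that $1\times\pi$ really is the normalization of $M\times X$ in Padilla's sense (so the definition of $o_{(x_1,x_2)}$ via the sum over $\pi^{-1}(x_2)$ applies verbatim) and that the cross product commutes with $(1\times\pi)_*$ and with the restriction maps to stalks. Both of these should follow directly from the chain-level naturality of the cross product and Padilla's properties of the normalization, so while the argument is routine it needs to be set up carefully.
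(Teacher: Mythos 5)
Your overall architecture mirrors the paper's — reduce to the pointwise identity $o_{x_1}\times o_{x_2}=o_{(x_1,x_2)}$ and then handle general $X$ by passing to the normalization $\pi\colon\hat X\to X$ with $1\times\pi$ the normalization of $M\times X$. However, there is a genuine gap in how you organize the local identity. You establish $o_{x_1}\times o_{x_2}=o_{(x_1,x_2)}$ only for $x_2$ in the regular stratum of $X$ (via excision and Dold's classical product formula), and then in the normalization step you invoke it in the form $o_{(x_1,y)}=o_{x_1}\times o_y$ for $y\in\pi^{-1}(x_2)\subset\hat X$. But when $x_2$ is singular, every such $y$ is singular in $\hat X$, so the regular-stratum case does not apply; and since $\hat X$ is already normal, another appeal to normalization gives nothing new.

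What you are missing is the intermediate case ``$X$ normal, $x_2$ arbitrary,'' which is exactly what the paper supplies and which is the essential content of the proof. The argument there is sheaf-theoretic rather than pointwise: by Proposition \ref{P: local o}(2) applied to $M$, $X$, and $M\times X$, each of these (with $X$ normal) admits a distinguished global section $s$, $t$, $u$ of its orientation sheaf. The cross product $s\times t$ is a continuous section of the orientation sheaf of $M\times X$ which agrees with $u$ at regular points by the classical formula \cite[VIII.2.13]{Dold}; the uniqueness clause of Proposition \ref{P: local o}(2) then forces $s\times t=u$ at every point, giving $o_{(x_1,y)}=o_{x_1}\times o_y$ for all $y$, singular or not, when $X$ is normal. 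Without this step your chain of equalities is unjustified at the point where you replace $o_{(x_1,y)}$ by $o_{x_1}\times o_y$. Once this normal case is in hand, the normalization computation you wrote is correct and is essentially the one in the paper.
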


\begin{proof}
It suffices to show that for each $x\in M$ and $y\in X$ we have
$o_{(x,y)}=o_x\times o_y$.  

First suppose $X$ is normal. 
Applying Proposition \ref{P: local o}(2) to $M$, $X$ and $M\times X$ gives
sections $s$, $t$ and $u$.  Then $s\times t$ is a continuous section which 
agrees with $u$ for regular points by \cite[VIII.2.13]{Dold} and hence for all
points by the uniqueness property in Proposition \ref{P: local o}(2).
Thus we have
\[
o_{(x,y)}
=u(x,y) 
=s(x)\times t(y) 
=o_x\times o_y
\]
for all $x\in M$ and $y\in X$.

For general $X$, let $\pi:\hat{X}\to X$ be
a normalization.  Then $1\times\pi:M\times \hat{X}\to M\times X$ 
is also a normalization by \cite[Example 2.3(3)]{Pa03}, and if $x\in M$, 
$y\in X$ we have
\begin{align*}
o_x\times o_y
&=
o_x
\times
\Bigl(\sum_{z\in \pi^{-1}(y)} \pi_*(o_z)\Bigr)\\
&=
\sum_{(x,z)\in (1\times \pi)^{-1}(x,y)} (1\times\pi)_*(o_{(x,z)})\\
&=
o_{(x,y)}.
\end{align*}
\end{proof}

\begin{remark}
The analogous fact is true for a product of two pseudomanifolds, but the proof
is more involved (because one has to show that the product of two
normalizations is a normalization).
\end{remark}

\paragraph{A generalization of a result of Goresky and MacPherson.} 
In \cite{GM1}, Goresky and MacPherson consider piecewise-linear (PL) stratified pseudomanifolds without codimension one strata; see \cite{GM1} and \cite[Sections I and II]{Bo} for more on PL pseudomanifolds. If $X^n$ is such a space with an $R$-orientation, then, given a choice of triangulation compatible with the stratification,  one may obtain a fundamental class $\Gamma\in H_n(X;R)$ just as one does for $R$-oriented PL manifolds, represented by a cycle consisting of  a sum over all  $n$-simplices of $X$, properly oriented and thought of as singular simplices via a total ordering of the vertices of $X$. In particular, at each $x\in X-X^{n-1}$, such a $\Gamma$ restricts to the preferred generator of $H_n(X,X-\{x\};R)$, which is the definition given for the fundamental class in \cite[Section 1.4]{GM1}. But also, 
it is easy to check that each simplex of this cycle is $\bar 0$-allowable, and,
by excision and the definitions,  $H_n(X,X-\{x\};R)\cong I^{\bar 0}H_n(X,X-\{x\};R)$ for each $x\in X-X^{n-1}$. So the same cycle represents our fundamental class $\Gamma_X\in I^{\bar 0}H_*(X;R)$, by Corollary \ref{T: fund id2}.

Each perversity considered in \cite{GM1} depends only on codimension and satisfies $\bar p(2)=0$ and 
$\bar p(k)\leq \bar p(k+1)\leq \bar p(k)+1$, where here $k$ and $k+1$ are input codimensions. Continuing to assume $X$ has no codimension one stratum, such perversities satisfy $\bar p\leq \bar t$, and each of the PL intersection homology groups $I^{\bar p}H_*(X)$ of \cite{GM1} is isomorphic to the corresponding groups as defined here (see \cite{GBF35} for details). In the setting of \cite{GM1}, Goresky and MacPherson observe that the cap product in ordinary (co)homology $\smallfrown\Gamma: H^i(X)\to H_{n-i}(X)$ factors as 
$$H^i(X)\xr{\alpha_{\bar p}} I^{\bar p}H_{n-i}(X)\xr{\omega_{\bar p}} H_{n-i}(X).$$ Here the second arrow is induced by the inclusion of PL chain groups\footnote{Given the assumptions on perversities, no allowable PL $i$-simplex can intersect $X^{n-1}=X^{n-2}$ with dimension greater than $i-2$. Thus no simplex of the  boundary $\bd \xi$ of an $i$-chain $\xi$ can lie in $X^{n-2}$, and so the boundary map here is just the ordinary boundary map. Thus this inclusion makes sense as a chain map, even with a PL version of the definition of intersection chains presented in this paper.}  $I^{\bar p}C_*(X)\into C_*(X)$. The first map is described in \cite{GM1} in two different ways, the first utilizing mock bundles to represent elements of $H^i(X)$ as embedded geometric cycles satisfying the $\bar 0$ perversity condition (see \cite{GM1, BRS, Go81}) and the second utilizing a dual cell decomposition of $X$ to describe the cap product with the fundamental class.

The cap product presented here shows that if we use field coefficients $F$, then for an $F$-oriented PL stratified pseudomanifolds without codimension one strata and for perversity $\bar p$ such that $\bar p\leq \bar t$ and  $D\bar p\leq \bar t$, the cap product $\smallfrown\Gamma: H^i(X;F)\to H_{n-i}(X;F)$ factors as a composition
\begin{equation}\label{E: alpha}
H^i(X;F)\xr{\omega_{D\bar p}^*}I_{D\bar p}H^{i}(X;F)\xrightarrow{\smallfrown \Gamma} 
I^{\bar p}H_{n-i}(X;F)\xr{\omega_{\bar p}} H_{n-i}(X;F).
\end{equation}
Here both the first and last maps are induced by inclusions, $I^{D\bar p}C_*(X;F)\into C_*(X;F)$ and 
$I^{\bar p}C_*(X;F)\into C_*(X;F)$, respectively, and the middle map is our intersection (co)homology cap product with the fundamental class in $I^{\bar 0}H_n(X;F)$. Using our observation that, for a PL pseudomanifold without codimension one strata, $\Gamma$ can be represented by a cycle that also represents the Goresky-MacPherson fundamental class in $H_n(X;F)$,  it is not difficult to verify that   the composition corresponds to the standard cap product by the Goresky-MacPherson fundamental class.
It is interesting to notice that the construction of the intersection (co)homology cap product uses different approximations to the diagonal depending on the perversity $\bar p$, but since the cap product in ordinary homology is independent of the choice of diagonal approximation, the composition of these  maps  always corresponds to the ordinary cap product $\smallfrown\Gamma: H^i(X;F)\to H_{n-i}(X;F)$.

In the setting of \emph{topological} compact $F$-oriented stratified pseudomanifolds  we do not have mock bundles or dual cell decompositions available. However,  we can nonetheless  \emph{define} $\bar \alpha_{\bar p}$ to be the composition of the first two maps of \eqref{E: alpha}. Continuing to assume that $X$ has no codimension one strata and that $\bar p\leq \bar t$ and $D\bar p\leq \bar t$, it is  possible to demonstrate  that  $\bar \alpha$, $\omega$, and the intersection product  possesses relations analogous to those  observed by Goresky and MacPherson in \cite[Section 2.4]{GM1}. Details will be provided in \cite{GBF35}; see also \cite{GBF30} for the sheaf-theoretic viewpoint on the relations between the duality discussed in this paper and the Goresky-MacPherson intersection products of \cite{GM1, GM2}. It is likely that $\bar \alpha_{\bar p}$ is identical to the Goresky-MacPherson $\alpha_{\bar p}$ when $X$ is a compact PL stratified pseudomanifold without codimension one strata, though we will not pursue this here.

\subsection{Topological invariance of the fundamental class}
\label{S: fund indep}

In this subsection, we demonstrate a fact needed in \cite{GBF31}: the 
fundamental class of a compact oriented stratified pseudomanifold with no 
codimension one strata is an oriented homeomorphism invariant.

The proscription on codimension one strata is necessary because the group $I^{\bar 0}H_n(X;R)$ itself depends on the stratification of $X$ if codimension one strata are allowed. For example, let $S=S^1$ be the unit circle   stratified trivially, and let $S'$ be the circle stratified as $S^1\supset \{x,y\}$, where $x,y$ are any two distinct points of $S^1$. Then simple computations reveal that $I^{\bar 0}H_1(S;R)\cong H_1(S;R)\cong R$ but 
$I^{\bar 0}H_1(S';R)\cong H_1(S^1,\{x,y\};R)\cong R\oplus R$. 

For the remainder of this subsection, we limit discussion to stratified pseudomanifolds with no codimension one strata. 

We recall from \cite{Ki} that for any stratified pseudomanifold $X$ there is 
an intrinsic coarsest ``stratification'' $X^*$ of $X$ (which is actually a 
CS-set, not a stratified pseudomanifold) that depends only on $X$ as a 
topological space.  The inclusion map 
$I^{\bar 0}H_*(X;R)\to I^{\bar 0}H_*(X^*;R)$ is an isomorphism, and hence if $X'$ is a restratification of 
$X$ (that is, the space $X$ with an alternate pseudomanifold stratification) 
there is a canonical composite isomorphism
\[
I^{\bar 0}H_*(X;R)\xrightarrow{\cong} 
I^{\bar 0}H_*(X^*;R) \xleftarrow{\cong} I^{\bar 0}H_*(X';R).
\]

We first show that an orientation of $X$ determines an orientation on each
restratification of $X$ and on $X^*$.

\begin{lemma}\label{L: orient}
Let $X$ be an oriented stratified pseudomanifold. Let $X'$ be a 
restratification of $X$. Then $X'$ has a unique orientation so that the 
induced orientations on $X_n\cap X'_n$ from $X$ and $X'$ agree. This remains 
true with $X'$ replaced by $X^*$.
\end{lemma}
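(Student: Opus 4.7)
The plan is to transfer the orientation from $X_n$ to $X'_n$ via the overlap $U := X_n \cap X'_n$. Because there are no codimension-one strata in $X$, the closed subsets $X^{n-1}$ and $(X')^{n-1}$ of $X$ both have topological dimension at most $n-2$. In particular $X^{n-1} \cap X'_n$ is a closed subset of topological dimension $\leq n-2$ in the topological $n$-manifold $X'_n$, so $U$ is open in $X'_n$ and meets every connected component $W$ of $X'_n$ in a connected, dense open subset; this uses the standard fact that removing a closed set of topological codimension $\geq 2$ from a connected topological $n$-manifold preserves connectedness. The symmetric statements hold with the roles of $X$ and $X'$ interchanged.

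Next I would take the given orientation of $X$ --- a global section of the orientation sheaf $\mathcal{O}_{X_n}$ that is a generator at every stalk --- and restrict it to $U$. Since $\mathcal{O}_{X_n}|_U = \mathcal{O}_{X'_n}|_U$, this yields a section of $\mathcal{O}_{X'_n}$ over $U$ that is a generator at every stalk. Because $\mathcal{O}_{X'_n}$ is locally constant on the manifold $X'_n$, and because $U \cap W$ is nonempty and connected in each component $W$, this section extends uniquely to a global section of $\mathcal{O}_{X'_n}$. Being a generator at a stalk is a local condition that is preserved on the dense set $U$, so the extension is itself an orientation of $X'_n$, which by construction induces the original orientation on $X_n \cap X'_n$.

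For uniqueness, any two orientations of $X'$ agreeing on $U$ give two sections of $\mathcal{O}_{X'_n}$ agreeing on a subset that meets each component of $X'_n$ in a nonempty open set; by local constancy they must coincide. The argument for $X^*$ is identical once one observes that the intrinsic coarsening only merges strata, so $(X^*)_n \supseteq X_n$ and hence $(X^*)^{n-1} \subseteq X^{n-1}$ has topological dimension $\leq n-2$; the same sheaf-theoretic extension then applies with $X'$ replaced by $X^*$.

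The main obstacle is the connectedness step --- verifying that $U \cap W$ is connected for each component $W$ of $X'_n$ --- because this is precisely where the no-codimension-one hypothesis is essential. If codimension-one strata were allowed, then $X^{n-1} \cap X'_n$ could contain an $(n-1)$-dimensional piece separating $W$, so that compatible orientations on the two sides would have to be imposed by hand rather than being forced by density. Granted that connectedness, the remaining content is the formal statement that sections of a locally constant sheaf on a connected space are determined by their values on any nonempty open subset.
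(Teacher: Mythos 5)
Your overall strategy matches the paper's: restrict the given orientation to the dense overlap $U=X_n\cap X'_n$, and then extend the resulting section of the orientation sheaf over each connected component $W$ of $X'_n$. The uniqueness part of your argument is sound. However, there is a genuine gap in the existence step. You claim that because $\mc O_{X'_n}$ is locally constant and $U\cap W$ is a nonempty connected open subset of $W$, the section over $U\cap W$ ``extends uniquely to a global section.'' Connectedness plus local constancy gives \emph{uniqueness} of an extension but not its \emph{existence}. Indeed, take $W=S^1$ with the locally constant $R$-sheaf of nontrivial monodromy and $U\cap W = S^1\setminus\{\text{pt}\}$: this is dense, open, and connected, and the restricted sheaf is trivial there (so it has a section), but no section extends over $W$. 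Your closing sentence compounds the issue by stating that the remaining content is that ``sections of a locally constant sheaf on a connected space are determined by their values on any nonempty open subset,'' which is again a uniqueness statement, not an existence one.

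What the paper actually uses to get existence is stronger than connectedness: because the complement $W\setminus U$ has codimension $\geq 2$, the inclusion induces a \emph{surjection} $\pi_1(U\cap W)\to\pi_1(W)$. Under the equivalence between local systems and $\pi_1$-modules, the existence of a trivialization of $\mc O_{X'_n}$ over $U\cap W$ means the monodromy on $\pi_1(U\cap W)$ is trivial, and surjectivity then forces the monodromy on $\pi_1(W)$ to be trivial as well, so the sheaf is constant on $W$ and the trivialization extends. You should replace your appeal to connectedness with this $\pi_1$-surjectivity argument (or some equivalent orientability statement for $W$); otherwise the existence of the extension is unjustified.
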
 
\begin{proof}
The proof is the same for $X'$ or $X^*$. Note that since $X^*$ is coarser than $X$ or $X'$, it must also have no codimension one strata.

Notice that $X_n\cap X'_n$ is dense in both $X_n$ and $X'_n$ since $X_n$ and $X_n'$ are each dense in $X$. 
Now, by definition, an orientation on $X$ is an orientation on $X_n$, i.e. an isomorphism on $X_{n}$ from the orientation $R$-bundle to the constant $R$-bundle. Just as in the proof of  Borel \cite[Lemma 4.11.a]{Bo}, the restriction of this isomorphism to the dense subset  $X_n\cap X'_n$ extends uniquely to an isomorphism from  the orientation $R$-bundle to the constant $R$-bundle on $X'_n$, using the equivalence of local systems with $\pi_1$-modules on connected manifolds (in this case, components of $X_n'$), and that the fundamental group of a dense open set of a connected manifold surjects onto the fundamental group of the connected manifold. 
\end{proof}

\begin{proposition}
If $X$ is a compact $R$-oriented $n$-dimensional stratified pseudomanifold 
and $X'$ is a restratification of $X$ with the induced $R$-orientation then the 
canonical isomorphism $I^{\bar 0}H_n(X;R)\cong I^{\bar 0}H_n(X';R)$ takes 
$\Gamma_X$ to $\Gamma_{X'}$. 
 \end{proposition}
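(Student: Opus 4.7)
The plan is to use the factorization of the canonical isomorphism through the intrinsic stratification $X^*$ and reduce everything to the uniqueness characterization of the fundamental class in Corollary \ref{T: fund id2}. Specifically, I want to show that the image of $\Gamma_X$ in $I^{\bar 0}H_n(X^*;R)$ is the fundamental class $\Gamma_{X^*}$ (with respect to the induced orientation from Lemma \ref{L: orient}), and by symmetry the same will hold for $\Gamma_{X'}$, so both must correspond to $\Gamma_{X^*}$ under the canonical isomorphisms.

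First I would note that since $X^*$ is coarser than both $X$ and $X'$, we have $X_n \subseteq X^*_n$ and $X'_n \subseteq X^*_n$ (any stratum of $X$ is contained in a stratum of $X^*$, and the top-dimensional stratum is topologically characterized). Moreover, $X_n$ is dense in $X$ and hence in $X^*_n$. Therefore, every connected component of the manifold $X^*_n$ meets $X_n$, so we can choose a collection of points $\{x_i\}$ with one $x_i$ in each connected component of $X^*_n$ and with every $x_i \in X_n$. Let $\Gamma^*$ denote the image of $\Gamma_X$ under the canonical isomorphism $I^{\bar 0}H_n(X;R) \xrightarrow{\cong} I^{\bar 0}H_n(X^*;R)$.

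For each such $x_i$, naturality of the canonical isomorphism with respect to the inclusions of pairs gives a commutative square
\[
\xymatrix{
I^{\bar 0}H_n(X;R) \ar[r]^-{\cong} \ar[d] & I^{\bar 0}H_n(X^*;R) \ar[d] \\
I^{\bar 0}H_n(X,X-\{x_i\};R) \ar[r]^-{\cong} & I^{\bar 0}H_n(X^*,X^*-\{x_i\};R),
}
\]
so the restriction of $\Gamma^*$ at $x_i$ is the image of $o_{x_i}^X$ under the bottom isomorphism. Since $x_i\in X_n\cap X^*_n$, we may choose a Euclidean neighborhood $U\subset X_n\cap X^*_n$ of $x_i$, and excision identifies both groups in the bottom row with $H_n(U,U-\{x_i\};R)$. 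Under these identifications the local orientation classes $o_{x_i}^X$ and $o_{x_i}^{X^*}$ are the images of the same generator of $H_n(U,U-\{x_i\};R)$, because by Lemma \ref{L: orient} the orientations of $X$ and $X^*$ agree on $X_n \cap X^*_n$. Hence $\Gamma^*$ restricts to $o_{x_i}^{X^*}$ at every $x_i$, and Corollary \ref{T: fund id2} applied to $X^*$ (which has no codimension-one strata since it is coarser than $X$) yields $\Gamma^* = \Gamma_{X^*}$.

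The identical argument with $X'$ in place of $X$ shows that the image of $\Gamma_{X'}$ in $I^{\bar 0}H_n(X^*;R)$ is also $\Gamma_{X^*}$, so $\Gamma_X$ and $\Gamma_{X'}$ correspond under the composite canonical isomorphism. The only delicate point — and the step that most deserves care — is the verification that the canonical isomorphism of local groups at a point of $X_n \cap X^*_n$ really does send $o_{x_i}^X$ to $o_{x_i}^{X^*}$; this comes down to tracing through the definition of the local orientation class as the image of the manifold generator under the excision-and-K\"unneth identification together with the compatibility of orientations guaranteed by Lemma \ref{L: orient}.
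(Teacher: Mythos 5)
Your overall strategy — reduce to Corollary~\ref{T: fund id2} by checking local restrictions at points of $X_n \cap X_n'$ — is exactly the paper's intended argument (the paper's entire proof is ``This follows easily from Corollary~\ref{T: fund id2}, choosing points in $X_n\cap X_n'$''). The naturality square you draw and the identification of local orientation classes via a Euclidean neighborhood in $X_n\cap X_n^*$, using Lemma~\ref{L: orient}, are all correct and are precisely the details the paper is suppressing.

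However, there is a genuine gap in the way you route the argument through $X^*$. You apply Corollary~\ref{T: fund id2} \emph{to $X^*$} and conclude $\Gamma^* = \Gamma_{X^*}$, but $X^*$ is only a CS-set, not a stratified pseudomanifold — the paper says this explicitly when introducing the intrinsic stratification. Everything in Section~\ref{S: fund}, including Theorem~\ref{T: global o} and Corollary~\ref{T: fund id2}, is proved only for stratified pseudomanifolds, so $\Gamma_{X^*}$ is simply not defined within the paper's development, and the uniqueness characterization you invoke is not available for $X^*$. (Your parenthetical ``which has no codimension-one strata'' checks a hypothesis that does hold, but not the one that fails.) This is not just a notational shortcut: both your conclusion for $\Gamma_X$ and your conclusion for $\Gamma_{X'}$ terminate in the undefined object $\Gamma_{X^*}$, and the equality between them is exactly what you cannot assert without a uniqueness result for $X^*$.

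The fix is to apply Corollary~\ref{T: fund id2} to $X'$ rather than to $X^*$. Choose $\{x_i\}$ with one point in each component of $X_n \cap X_n'$ (this gives at least one $x_i$ in every component of $X_n'$, by density, which is enough for the corollary). Your naturality diagram extends to a three-column diagram over $X \to X^* \leftarrow X'$, all squares commute, the bottom row consists of isomorphisms by excision to a Euclidean neighborhood of $x_i$ inside $X_n \cap X_n'$, and Lemma~\ref{L: orient} shows $o_{x_i}^X$ and $o_{x_i}^{X'}$ agree under these identifications. Hence the image of $\Gamma_X$ in $I^{\bar 0}H_n(X';R)$ restricts to $o_{x_i}^{X'}$ for every $x_i$, and Corollary~\ref{T: fund id2} applied to the stratified pseudomanifold $X'$ gives the result directly, with no reference to a fundamental class for $X^*$.
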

\begin{proof}
This follows easily from Corollary \ref{T: fund id2}, choosing points in $X_n\cap X_n'$. 
\end{proof}

Finally, we observe that the fundamental class is an oriented topological 
invariant. To see what this means, suppose $X, Y$ are compact $R$-oriented 
$n$-dimensional stratified pseudomanifolds and that $f:X\to Y$ is a 
topological homeomorphism (not necessarily stratified). 
The stratification of $X$ induces a restratification 
$Y'$ of $Y$ with $(Y')^i=f(X^i)$ and
the $R$-orientation of $X$ induces an $R$-orientation of $Y'$.

\begin{definition} Let  $X, Y$ be compact $R$-oriented $n$-dimensional stratified pseudomanifolds without codimension one strata. We will say that the topological homeomorphism $f:X\to Y$
is an \emph{oriented homeomorphism} if the induced orientation on $Y'$ is consistent with the given orientation of $Y$ in the sense of Lemma \ref{L: orient}.
\end{definition}

The following corollary is now evident from the preceding results of this subsection:

 \begin{corollary}
\label{x4}
 If $f:X\to Y$ is an oriented homeomorphism of compact $R$-oriented
$n$-dimensional stratified 
pseudomanifolds without codimension one strata, then $f$ takes $\Gamma_X\in 
I^{\bar 0}H_n(X;R)$ to $\Gamma_Y\in I^{\bar 0}H_n(Y;R)$.
 \end{corollary}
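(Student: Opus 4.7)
The plan is to reduce the statement to the preceding proposition (invariance of the fundamental class under restratification) by exploiting the fact that, although $f$ is only assumed to be a topological homeomorphism, it becomes a stratified homeomorphism onto the induced restratification $Y'$ of $Y$.

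First I would make the following observation. Because $(Y')^i = f(X^i)$ by definition of the restratification, the map $f$ regarded as a map $f\colon X \to Y'$ is a stratum-preserving homeomorphism in the strongest sense: it carries $X^i$ onto $(Y')^i$ and restricts to a homeomorphism of regular parts $X_n \to Y'_n$. Any such homeomorphism induces an isomorphism $f_*\colon I^{\bar 0}H_n(X;R) \to I^{\bar 0}H_n(Y';R)$ (functoriality of singular intersection chains on the nose for stratum-preserving maps). Since $f$ sends $\bar 0$-allowable simplices to $\bar 0$-allowable simplices and preserves the induced $R$-orientations on regular strata, the naturality diagrams for local orientation classes in Proposition \ref{P: local o} (and Proposition \ref{P: abnormal local o}) show that $f_*(o_x) = o_{f(x)}$ for each $x \in X_n$, where the latter is computed with respect to the orientation $Y'$ inherits from $X$ via $f$. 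By Corollary \ref{T: fund id2}, applied by picking one point in each connected component of $X_n$ and observing that $f$ bijects these with components of $Y'_n$, we conclude $f_*(\Gamma_X) = \Gamma_{Y'}$ in $I^{\bar 0}H_n(Y';R)$.

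Next I would invoke the preceding proposition of Subsection \ref{S: fund indep}: since the $R$-orientation of $Y$ and the $R$-orientation of $Y'$ agree on $Y_n \cap Y'_n$ (this is precisely the content of the hypothesis that $f$ is an \emph{oriented} homeomorphism, together with Lemma \ref{L: orient}), the canonical isomorphism $I^{\bar 0}H_n(Y';R) \cong I^{\bar 0}H_n(Y;R)$ coming from the common coarsening $Y^* = (Y')^*$ sends $\Gamma_{Y'}$ to $\Gamma_Y$. Composing this with $f_*$ gives the desired equality $f(\Gamma_X) = \Gamma_Y$ under the canonical identification.

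The main obstacle I anticipate is not the chain of formal identifications but rather verifying cleanly that $f\colon X \to Y'$ induces an isomorphism on intersection homology which respects local orientation classes. This is really a statement that $\bar 0$-perversity intersection homology is functorial under stratum-preserving homeomorphisms and that the local orientation class constructed in Definition \ref{D: non-normal local class} is natural under such homeomorphisms. Both are essentially definitional given that the allowability condition and the excision arguments used to compute $I^{\bar 0}H_n(X, X - \{x\}; R)$ are preserved by a stratum-preserving homeomorphism, but one should check that the normalization $\hat X \to X$ is also natural under $f$ (this follows from uniqueness of the normalization in \cite{Pa03}), so that the local orientation class defined via the sum $\sum_{y \in \pi^{-1}(x)} \pi_*(o_y)$ transports correctly.
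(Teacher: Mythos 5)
Your proof is correct and follows essentially the same route the paper intends: factor $f$ as a stratified homeomorphism $X \to Y'$ (which evidently carries $\Gamma_X$ to $\Gamma_{Y'}$), then apply the immediately preceding proposition on restratification invariance to pass from $\Gamma_{Y'}$ to $\Gamma_Y$. The paper leaves the corollary as "evident from the preceding results"; you have simply spelled out the same two-step argument in more detail, including the (correct) observation that naturality of local orientation classes under stratum-preserving homeomorphisms and uniqueness of normalization justify the first step.
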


\section{Poincar\'e duality}
\label{j4}

Let $F$ be a field.  In this section all intersection homology and cohomology
will have $F$ coefficients.

We will show that cap product with the 
fundamental class induces a Poincar\'e duality isomorphism from compactly
supported intersection cohomology to 
intersection homology.

Let  $X$ be an $F$-oriented stratified pseudomanifold of dimension $n$, 
possibly noncompact and possibly with codimension one strata.
Let $\bar p$ be a perversity. 

\begin{definition}
The compactly supported intersection cohomology of $X$ with perversity 
$\bar p$, denoted $I_{\bar p}H^*_c(X;F)$, is defined to be 
\[
\dlim I_{\bar p}H^*(X,X-K;F),
\]
where $K$ ranges over all compact subsets of $X$.
\end{definition}

Let $\bar q=\bar t-\bar p$.

For each compact $K\subset X$
we define 
\[
{\ms D}_K:I_{\bar p}H^*(X, X-K;F)\to I^{\bar q}H_{n-*}(X;F)
\]
by 
\[
{\ms D}_K(\alpha)=(-1)^{|\alpha|n}(\alpha\smallfrown\Gamma_K).
\]

\begin{remark}
For the sign $(-1)^{|\alpha|n}$, which does not appear in the literature, see
\cite[Section 4.1]{GBF18}, where this sign is introduced to make the duality map a chain map of appropriate degree. 
\end{remark}

Next we observe that the ${\ms D}_K$ are consistent as $K$ varies.  Let
$K\subset K'$ and let $j:X-K'\hookrightarrow X-K$ be the inclusion. 
Then for $\alpha\in I_{\bar p}H^*(X,X-K)$ we have
\begin{align*}
{\ms D}_{K'}(j^*\alpha)
&=
(-1)^{|\alpha|n}((j^*\alpha)\smallfrown\Gamma_K')\\
&=
(-1)^{|\alpha|n}(\alpha\smallfrown(j_*\Gamma_K'))
\quad\text{by Proposition \ref{P: cap inclusion}}\\
&=
(-1)^{|\alpha|n}(\alpha\smallfrown \Gamma_K)
\quad\text{by Remark \ref{j7}}\\
&=
{\ms D}_K(\alpha).
\end{align*}

Now we define
$$\ms D:  I_{\bar p}H_c^*(X;F)\to I^{\bar q}H_{n-*}(X;F)$$ 
to be 
\[
\dlim {\ms D}_K.
\]

\begin{theorem}[Poincar\'e duality]\label{T: duality}
Let $F$ be a field. Let $X$ be an $n$-dimensional $F$-oriented stratified 
pseudomanifold, possibly noncompact and possibly with codimension one strata, 
and let $\bar p+\bar q=\bar t$. Then $\ms D:  I_{\bar p}H_c^i(X;F)\to I^{\bar 
q}H_{n-i}(X;F)$ is an isomorphism\footnote{As this is the central theorem of 
the paper, we remind the reader here that the intersection homology groups of 
this theorem are those defined in Section \ref{S: background}.}.
\end{theorem}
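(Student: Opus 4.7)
The plan is to mimic Hatcher's proof of Poincar\'e duality for manifolds (as indicated in the introduction), proceeding by a Mayer--Vietoris/bootstrap argument, with the classical building blocks replaced by distinguished neighborhoods $\R^{n-k}\times cL^{k-1}$. The first step is to establish that $\ms D$ is natural: for open sets $U\subset V\subset X$, the direct-limit definition together with the compatibility of the $\ms D_K$ already proved, and the naturality of the cap product (Proposition \ref{P: cap inclusion}), will yield compatible Mayer--Vietoris ladders
\[
\xymatrix@C=14pt{
\cdots \ar[r] & I_{\bar p}H_c^{i}(U\cap V) \ar[r]\ar[d]^{\ms D} & I_{\bar p}H_c^{i}(U)\oplus I_{\bar p}H_c^{i}(V) \ar[r]\ar[d]^{\ms D\oplus\ms D} & I_{\bar p}H_c^{i}(U\cup V) \ar[r]\ar[d]^{\ms D} & \cdots \\
\cdots \ar[r] & I^{\bar q}H_{n-i}(U\cap V) \ar[r] & I^{\bar q}H_{n-i}(U)\oplus I^{\bar q}H_{n-i}(V) \ar[r] & I^{\bar q}H_{n-i}(U\cup V) \ar[r] & \cdots
}
\]
(coefficients in $F$ suppressed). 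The Mayer--Vietoris sequence for $I^{\bar q}H_*$ is standard; for the compactly supported cohomology side, one dualizes the intersection-chain Mayer--Vietoris sequence of $U\cup V$ after passing to the limit over compact subsets, using the fact that $F$ is a field (Remark \ref{l1}) so dualizing preserves exactness. Signs of the connecting homomorphisms will be handled using Proposition \ref{L: boundary}. Verifying commutativity of the ladder amounts to checking that the fundamental classes $\Gamma_K$ for $K\subset U\cup V$ restrict appropriately to the pieces, which follows from Remark \ref{j7} and Theorem \ref{T: global o}.

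Given the five-lemma, the problem reduces by induction (on the minimum number of distinguished neighborhoods covering a compact piece) to verifying $\ms D$ is an isomorphism for a single distinguished neighborhood $N\cong\R^{n-k}\times cL^{k-1}$ and for subsets thereof. For the second step I would handle arbitrary open $U\subset N$ by expressing $U$ as an increasing union of the interiors of PM-convex sets (as introduced in Section \ref{S: fund}), using the fact that $\ms D$ commutes with the direct limits $I_{\bar p}H_c^*(U)=\dlim I_{\bar p}H_c^*(V)$ taken over precompact $V\subset U$ and that $I^{\bar q}H_*(U)$ is also a direct limit over such $V$ (via compact supports for chains). For the PM-convex base cases themselves, I would further reduce to the two key atomic cases: (i) $N=\R^n$ with trivial stratification, which is the classical Poincar\'e duality theorem for manifolds; and (ii) $N=cL$ for a compact $(k-1)$-dimensional stratified pseudomanifold $L$. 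Case (i) is known, and combining it with (ii) handles general $\R^{n-k}\times cL$ via the product formula: Proposition \ref{j6} shows that cap product with $\Gamma_{\R^{n-k}}\times\Gamma_{cL\text{-local}}$ factors the duality map as a product of the Euclidean duality and the cone duality, and Proposition \ref{j8} identifies the fundamental class on the product with this cross product.

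The cone case (ii) will be the main obstacle. Here one must compute $\ms D:I_{\bar p}H_c^i(cL)\to I^{\bar q}H_{k-i}(cL)$ directly, matching the two pieces of the cone formula (Proposition \ref{P: cone}) on the two sides. Concretely, one uses that $cL$ is compact (so compactly supported cohomology equals ordinary cohomology with the fundamental class $\Gamma_{cL}$ existing only relatively), and one must analyze cap product with a relative fundamental class $\Gamma_{cL,L}\in I^{\bar 0}H_k(cL,L;F)$ using the long exact sequence of the pair $(cL,L)$, the cone formula for both $I_{\bar p}H^*$ and $I^{\bar q}H_*$, and induction on depth to know the duality already holds on $L$ (which has depth one less than $cL$). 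The sign conventions and the precise degrees at which the cone formula cuts off (namely $\bar p(\{v\})$ vs.\ $n-1-\bar p(\{v\})$) must match up precisely when $\bar p+\bar q=\bar t$; dualizing $\bar t(\{v\})=k-2$ gives exactly the complementary threshold needed for the two cone-formula pieces to biject under $\ms D$.

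Finally, to pass from ``$\ms D$ is an isomorphism on all precompact open subsets of a distinguished neighborhood'' to the statement for all of $X$, I would write $X$ as a (possibly transfinite) increasing union of open sets, each obtained from the previous by adjoining one more distinguished neighborhood, apply the Mayer--Vietoris step at each stage, and then apply the direct limit argument once more. In the non-paracompact or uncountable case a Zorn's lemma argument, as in Hatcher \cite{Ha}, will close the proof. The signs $(-1)^{|\alpha|n}$ play no role in whether $\ms D$ is an isomorphism but will be tracked to confirm $\ms D$ is a chain map of the correct degree.
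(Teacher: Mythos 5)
Your proposal has the right architecture and essentially parallels the paper's proof: a depth induction, a cone lemma, a product (distinguished-neighborhood) lemma, a Mayer--Vietoris ladder, and a Zorn's lemma closure. The cone-case analysis you sketch (pairing the two pieces of the cone formula for $I_{\bar p}H^*$ and $I^{\bar q}H_*$ via the long exact sequence of the pair and $\bar p + \bar q = \bar t$) is exactly how the paper handles it, including the observation that $\bar 0\text{-}$duality identifies the thresholds. The product step using Propositions \ref{j6} and \ref{j8} is also correct. These pieces are fine.

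The genuine gap is your treatment of the Mayer--Vietoris compatibility, particularly the square involving the connecting homomorphisms $\delta$ and $\partial$. You assert that commutativity of the ladder ``amounts to checking that the fundamental classes $\Gamma_K$ restrict appropriately to the pieces, which follows from Remark \ref{j7} and Theorem \ref{T: global o},'' with signs handled by Proposition \ref{L: boundary}. That is adequate for the two non-boundary squares (they reduce to naturality of the cap product, Proposition \ref{P: cap inclusion}, together with restriction of fundamental classes), but it does not work for the square with $\delta$ and $\partial$. There, one must produce a \emph{chain-level} decomposition of $\bar d(\Gamma_{K\cup L})$ as a sum of three terms supported respectively in $(U-L)\times(U-L)$, $(U\cap V)\times(U\cap V)$, and $(V-K)\times(V-K)$ --- the analogue of Hatcher's ``barycentric subdivision of the fundamental class.'' In the intersection setting this is nontrivial: because $\bar d$ is only defined at the level of homology via the K\"unneth isomorphism, one cannot simply subdivide a representing cycle of $\Gamma_{K\cup L}$ and apply an Alexander--Whitney map. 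The paper's Lemma \ref{h3} supplies this decomposition by showing the map
\[
\dlim_{W\in\cC} I^{\bar p}C_*(W)\otimes I^{\bar q}C_*(W,W-K\cup L)
\longrightarrow
\dlim_{W\in\cC} I^{Q_{\bar p,\bar q}}C_*(W\times W, W\times(W-K\cup L))
\]
is a quasi-isomorphism, by iterating the pushout/Mayer--Vietoris sequence of chain complexes with the \emph{relative} K\"unneth theorem (Theorem \ref{P: relative kunneth}) and the five lemma. Without this ingredient the five-lemma step in your bootstrap is unjustified, because the boundary square of the ladder is precisely where $\ms D$ fails to be obviously natural. You should add this subdivision argument (or an equivalent) as an explicit lemma before invoking the Mayer--Vietoris ladder.

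A smaller remark: you propose to route the base case through PM-convex sets and $\R^n$ plus $cL$, whereas the paper proves the product lemma for $M\times cL$ with $M$ an arbitrary (nonsingular) manifold. Your version is not wrong, but the more general product lemma is what combines cleanly with the direct-limit argument over compact sets $K_1\times c_rL$; the PM-convex machinery is used in the paper only for constructing fundamental classes, not for the duality bootstrap itself.
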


\begin{proof}
We argue by induction on the depth of $X$. When $X$ has depth $0$, $X$ is a
manifold and the result is classical. So we assume now that $X$ has positive
depth and that the theorem has been proven on stratified pseudomanifolds of depth less than that of $X$.

\begin{lemma}\label{L: cone duality}
If the conclusion of Theorem \ref{T: duality} holds for the compact
$F$-oriented stratified $k-1$ pseudomanifold $L$, then it holds for $cL$. 
\end{lemma}

For the proof we need some notation, which will also be used later.

\begin{notation}
For $0<r<1$ let $c_rL$ denote the image of $[0,r]\times L $ in $cL=
([0,1)\times L)/\sim$.
\end{notation}

\begin{proof}
We orient $cL$ consistently with the product $(0,1)\times(L-L^{k-2})\cong 
cL-(cL)^{k-1}$.  Let $v$ denote the vertex of $cL$.

We are free to choose any cofinal collection of compact sets, so we choose 
$K$ to have the form $c_rL$, $0< r< 1$.

Fix such a $K$.  We claim that the map 
\[
{\ms D}_K:
I_{\bar p}H^i(cL, cL-K;F)
\to
I^{\bar q}H_{k-i}(cL;F)
\]
is already an isomorphism (before passage to the direct limit).

Let $b\in (r,1)$ and let $j:L\to cL$ take $x$ to 
$(b,x)$.  Then $j$ is a stratified homotopy equivalence $L\to 
cL-K$, so Appendix A implies that, for every $\bar r$, $j_*$ is an 
isomorphism $I^{\bar r}H_*(L;F)\to I^{\bar r}H_*(cL-K;F)$ and $j$ induces an 
isomorphism $I^{\bar r}H_*(cL,L;F)\to I^{\bar r}H_*(cL,cL-K;F)$.

Now if $i< k-\bar p(\{v\})$ then Proposition \ref{P: cone} and Remark \ref{l1}
imply that the 
the domain and range of ${\ms D}_K$
are both 0 and ${\ms D}_K$ is vacuously an isomorphism.

So let $i\geq k-\bar p(\{v\})$ and
consider the following diagram
\begin{diagram}
I_{\bar p}H^i(cL, cL-K;F)&\lTo^{\delta}& I_{\bar p}H^{i-1}(cL-K;F)&\rTo_\cong^{j^*} & 
I_{\bar p}H^{i-1}(L;F)\\
\dTo^{{\ms D}_K}&&\dTo^{\cdot\capp j_*\Gamma_L}&&\dTo_{\cdot\smallfrown\Gamma_{L}}\\
I^{\bar q}H_{k-i}(cL;F)&\lTo^{\text{inc}}& I^{\bar q}H_{k-i}(cL-K;F)&\lTo_{\cong}^{j_*}&
I_{\bar q}H_{k-i}(L;F).
\end{diagram}

The right vertical arrow is an isomorphism by hypothesis (since $L$ is 
compact), and the right square commutes up to sign by Proposition \ref{P: cap 
inclusion}, so the middle vertical arrow is an isomorphism.
Proposition \ref{P: cone} and Remark \ref{l1} imply that the horizontal arrows
in the left square are isomorphisms so it suffices to show that the left square
commutes up to sign.  For this it suffices, by Proposition \ref{L:
boundary}(2), to show that $j_*\Gamma_L=\bd \Gamma_K$.  This in turn follows
from the fact, shown in the proof of Proposition \ref{P:  
local o}, that $\bar c\Gamma_L= \Gamma_K$
(in that proof it was assumed that $X$ is normal, 
but
the relevant part of the argument holds more generally).
\end{proof}

\begin{lemma}\label{L: product duality}
If the conclusion of Theorem \ref{T: duality} holds for the compact $F$-oriented stratified $k-1$  pseudomanifold $L$, then it holds for $M\times cL$, where $M$ is an $F$-oriented unstratified $n-k$ manifold and we use the product stratification and the product orientation. 
\end{lemma}

\begin{proof}
For convenience, let $M\times cL=Y$.

Any compact set $K\subset Y$ is contained in the compact set $p_1(K)\times
p_2(K)$, where $p_1,p_2$ are the respective projections to $M$ and to
$cL$. So the compact sets of the form $K_1\times K_2\subset Y$ are cofinal
among all compact sets. Furthermore, since compact sets of the form $c_rL$ (in
the notation of the proof of Lemma \ref{L: cone duality}) are cofinal among
compact sets in $cL$, compact sets of the form $K_1\times c_rL$ are cofinal
among the compact sets of $Y$. 
Therefore, to prove the lemma, it suffices to
show that the direct limit of the maps
\[
\cdot\smallfrown\Gamma_{K_1\times c_rL}:I_{\bar p}H^i(Y, Y-(K_1\times 
c_rL);F)\to I^{\bar q}H_{n-i}(Y;F)
\]
is an isomorphism.

Now consider the following diagram. 

{\footnotesize
\begin{diagram}
I_{\bar p}H^*(Y,Y-K_1\times c_rL;F) &\lTo^c&H^*(M,M-K_1;F)\otimes 
I_{\bar p}H^*(cL,cL-c_rL;F)\\
\dTo^{\cdot\smallfrown \Gamma_{K_1\times c_rL}} 
&&\dTo_{(\cdot\smallfrown\Gamma_{K_1})\otimes (\cdot\smallfrown \Gamma_{c_rL})}\\ 
I^{\bar q}H_*(Y;F)&\lTo^\times & H_*(M)\otimes I^{\bar q}H_*(cL).
\end{diagram}
}

Here the map $c$ is defined by $c(\alpha\otimes
\beta)=(-1)^{|\beta|(n-k)}(\alpha\times\beta)$ (recall that the cohomology 
cross product was defined just before Proposition \ref{j6}).  The diagram
commutes by Proposition \ref{j8} and the relative version of Proposition
\ref{j6}.  The lower horizontal arrow is an isomorphism by Theorem \ref{j1} 
(using perversity $\bar 0$ for the $M$ factor) and the upper horizontal arrow 
is an isomorphism by the relative version of Remark \ref{l2}; note that 
$I_{\bar p}H^*(cL,cL-c_rL;F)$ is finitely generated because $L$ is compact\footnote{By \cite{GM2} for Goresky-MacPherson perversities and by \cite[Proposition 4.1]{GBF23} in general, the intersection homology groups are isomorphic to the hypercohomology groups with compact support of cohomologically constructible complexes of sheaves. These groups are thus finitely generated on compact spaces by Wilder's ``Property $(P,Q)$''; see \cite[Section V.3]{Bo} for details.}.  
The right hand vertical arrow induces an isomorphism after passage to the 
direct limit by \cite[Theorem 3.35]{Ha} and Lemma \ref{L: cone duality}.  It 
follows that the left hand vertical arrow induces an isomorphism after 
passage to the direct limit as required.
\end{proof}

We can now complete the proof of Poincar\'e duality on $X$ with a Zorn's Lemma argument, as in the proof of manifold duality in Hatcher \cite[Proof of Theorem 3.35]{Ha}. By the induction assumption, any space of depth less than that of $X$ satisfies the conclusion of the theorem. In particular, it is true on  $X-X^m$ where $X^m$ is the smallest non-empty skeleton of $X$. Let $\mc U$ denote the set of open sets of $X$ containing $X-X^{m}$ and on which $\ms D$ is an isomorphism; $\mc U$ is partially ordered by inclusion. Suppose $S$ is a totally ordered subset of $\mc U$, and let $W=\cup_{U\in S} U$. For $U_a\subset U_b$ elements of $S$, there is a natural map $I_{\bar p}H^i_c(U_a; F)\to I_{\bar p}H^i_c(U_b; F)$ since an element of $I_{\bar p}H^i_c(U_a; F)$ is represented by an element of $I_{\bar p}H^i(U_a, U_a-K; F)$ for some compact $K$ and then $I_{\bar p}H^i(U_a, U_a-K; F)\cong I_{\bar p}H^i(U_b, U_b-K; F)$ by excision. Furthermore, we then see that $\dlim_{U\in S}I_{\bar p}H^i_c(U; F)\cong I_{\bar p}H^i_c(W; F)$. Of course also $I^{\bar q}H_{n-i}(W;F)\cong \dlim_{U\in S}I^{\bar q}H_{n-i}(U;F)$, and it follows that  $\ms D: I_{\bar p}H^i_c(W; F)\to I^{\bar q}H_{n-i}(W; F)$ is the direct limit of duality isomorphisms and hence an isomorphism. 

Therefore, each totally ordered set in $\mc U$ has a maximal element, and by
Zorn's lemma, there is a largest open $U\subset X$ such that $U$ contains
$X-X^{m}$ and duality holds on $U$. If $U=X$ we are done. Suppose $U\neq X$,
and let $x\in X-U$. Then 
$x\in  X-X^{n-k}$
for some $k\geq 1$, and $x$ is contained in a distinguished neighborhood $N$ 
homeomorphic to $\R^{n-k}\times cL^{k-1}$.  From now on we write $X_{n-k}$ for
$X^{n-k}-X^{n-k-1}$.
Proceeding as in the proof of 
\cite[Proposition 8]{Ki}, let $V=U\cap N$. Since this set is open (and so is 
$U\cap N\cap X_{n-k}$ in $X_{n-k}$), we can shrink the $cL$ factors in $N$ to 
obtain an open neighborhood $W$ of $U\cap N\cap X_{n-k}$ in $U\cap N=V$ such 
that $W$ is homeomorphic to $(U\cap N\cap X_{n-k})\times cL$. 

Now we have the following diagram, in which the rows are Mayer-Vietoris 
sequences:
{\footnotesize
\begin{diagram}[LaTeXeqno]\label{dHatcher}
&\rTo& I_{\bar p}H^i_c(W-W\cap X_{n-k};F)&\rTo &I_{\bar p}H^i_c(W;F)\oplus I_{\bar p}H^i_c(V-V\cap X_{n-k};F) &\rTo& I_{\bar p}H^i_c(V;F)&\rTo&\\
&&\dTo^{\ms D}&&\dTo_{\ms D\oplus (-\ms D)}&&\dTo_{\ms D}\\
&\rTo& I^{\bar q}H_{n-i}(W-W\cap X_{n-k};F)&\rTo & I^{\bar q}H_{n-i}(W;F)\oplus I^{\bar q}H_{n-i}(V-V\cap X_{n-k};F) &\rTo& I^{\bar q}H_{n-i}(V;F)&\rTo&.
\end{diagram}}
The diagram commutes up to sign by Proposition \ref{Hatcher} in subsection
\ref{sHatcher}.

The left hand vertical map and the second summand of the middle map are isomorphisms by the induction hypothesis on depth. The first summand of the middle map is an isomorphism by Lemma \ref{L: product duality}.  Hence the right hand map is an isomorphism by the five lemma.

Now we can plug this into the Mayer-Vietoris diagram

\begin{diagram}
&\rTo& I_{\bar p}H^i_c(V;F)&\rTo &I_{\bar p}H^i_c(U;F)\oplus I_{\bar p}H^i_c(N;F) &\rTo
& I_{\bar p}H^i_c(U\cup N;F)&\rTo&\\
&&\dTo&&\dTo&&\dTo\\
&\rTo& I^{\bar q}H_{n-i}(V;F)&\rTo& I^{\bar q}H_{n-i}(U;F)\oplus I^{\bar q}H_{n-i}(N;F) &\rTo& 
I^{\bar q}H_{n-i}(U\cup N;F)&\rTo&,
\end{diagram}
and we conclude similarly that duality holds on $U\cup N$, contradicting the maximality of $U$. Hence we must have $U=X$ and duality holds on $X$.

Note: if we assume that $X^{n-1}$ is second countable, then rather than resort to Zorn's lemma, we could instead use the same diagrams to perform an induction, starting with $X-X^{n-1}$ and then taking unions one at a time with members of a countable covering of $X^{n-1}$ by distinguished neighborhoods. 

\end{proof}

\subsection{Commutativity of diagram \eqref{dHatcher}}
\label{sHatcher}

In this subsection all intersection chain groups and intersection homology 
groups have $F$-coefficients, which will not be included in the notation.  
Our goal is to prove the following analogue of Lemma 3.36 of \cite{Ha}.

\begin{proposition}\label{Hatcher}
Let $X$ be an $F$-oriented stratified pseudomanifold.  Let $U$ and $V$ be open
subsets of $X$ with $X=U\cup V$.  Let $\bar p+\bar q=\bar t$. Then the 
following diagram, in which the rows are Mayer-Vietoris sequences, commutes 
up to sign.
{\footnotesize
\begin{diagram}[LaTeXeqno]\label{h1}
&\rTo& I_{\bar p}H^i_c(U\cap V)&\rTo &I_{\bar p}H^i_c(U)\oplus
I_{\bar p}H^i_c(V) &\rTo& I_{\bar p}H^i_c(X)&\rTo&I_{\bar p}H^{i+1}_c(U\cap 
V)&\rTo&\\
&&\dTo^{\ms D}&&\dTo_{\ms D\oplus (-\ms D)}&&\dTo_{\ms D}&&\dTo^{\ms D}\\
&\rTo& I^{\bar q}H_{n-i}(U\cap V)&\rTo & I^{\bar q}H_{n-i}(U)\oplus 
I^{\bar q}H_{n-i}(V) &\rTo& I^{\bar q}H_{n-i}(X)&\rTo&I^{\bar q}H_{n-i-1}(U\cap V)&\rTo&.
\end{diagram}}
\end{proposition}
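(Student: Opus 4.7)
The plan is to adapt Hatcher's argument for Lemma 3.36 (which proves the manifold version) to the intersection homology setting, using the tools developed earlier in the paper: naturality of the cap product under inclusions (Proposition \ref{P: cap inclusion}), compatibility of fundamental classes under restriction (Remark \ref{j7}), and the boundary formulas for the cap product (Proposition \ref{L: boundary}).

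First I would reduce the statement to a finite level. Every element of $I_{\bar p}H^i_c(U\cap V)$, $I_{\bar p}H^i_c(U)$, etc., is represented in $I_{\bar p}H^i(Y,Y-K)$ for some compact $K$; for the Mayer-Vietoris sequence in compactly supported cohomology, the natural indexing is by pairs of compact sets $K_1\subset U$, $K_2\subset V$, with $K=K_1\cup K_2$. For each such pair one obtains a MV sequence built from the long exact sequences of the triples by an excision argument, and this sequence maps to the homology MV sequence via the cap product with $\Gamma_{K_1}$, $\Gamma_{K_2}$, $\Gamma_{K_1\cap K_2}$, and $\Gamma_K$ at the appropriate levels (the homology MV is independent of $K_i$, so one checks the map after the cap product lands in intersection homology of $U\cap V$, $U$, $V$, $X$). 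Since direct limits preserve commutativity, it suffices to show that this finite-stage diagram commutes up to sign.

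For the two squares that do not involve the connecting homomorphism, commutativity up to sign is essentially formal: the horizontal maps are sums and differences of maps induced by the inclusions $(X,X-K_1)\hookrightarrow(X,X-K)$, $(U,U-K_1)\hookrightarrow(X,X-K_1)$, etc. Proposition \ref{P: cap inclusion} converts cap product with a pushed-forward fundamental class into pushforward of the cap product, and by Remark \ref{j7} the various $\Gamma_{K_i}$ are the images of $\Gamma_K$ under the inclusion maps of pairs. The sign $(-1)^{|\alpha|n}$ built into $\ms D$ is the same for each term, so it factors out uniformly.

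The main obstacle is the square containing the connecting homomorphisms. Here the cohomology $\delta$ and the homology $\partial$ are both defined by the usual MV procedure: lift, apply a differential, and restrict. Tracking carefully, $\delta$ comes from the coboundary of a cochain on one of $U$ or $V$ (Remark \ref{R: coboundary sign}), and $\partial$ comes from restricting a relative cycle on one of the pieces. The key identity needed is that cap product with the fundamental class intertwines these operations up to a controlled sign. This is exactly the content of Proposition \ref{L: boundary}(2) applied to the open inclusion of, say, $U\cap V$ into $U$: given $\alpha\in I_{\bar p}H^i(U\cap V,\ldots)$ representing a class in $I_{\bar p}H^i_c(X)$ via a lift, one has
\[
\delta(\alpha)\smallfrown\Gamma_{K_1}=-(-1)^{|\alpha|}\,i_*\bigl(\alpha\smallfrown\partial\Gamma_{K_1}\bigr),
\]
and $\partial\Gamma_{K_1}$ is, up to sign, the image of $\Gamma_{K_1\cap K_2}$ under the MV-boundary in homology, again by Remark \ref{j7} and the naturality of the long exact sequence. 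Assembling this with the extra $(-1)^{|\alpha|n}$ from $\ms D$ and the shift $|\delta\alpha|=|\alpha|+1$ produces an overall sign that is uniform, so the square commutes up to sign. The delicate part is the bookkeeping on signs, which one manages by following the conventions of Remark \ref{R: coboundary sign} and \cite[Section 4.1]{GBF18} consistently throughout; having done this, the direct limit over $(K_1,K_2)$ yields the desired sign-commutativity of diagram \eqref{h1}.
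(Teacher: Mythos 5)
Your reduction to compact sets $K_1\subset U$, $K_2\subset V$ and your treatment of the two squares without connecting homomorphisms match the paper's approach (Lemma \ref{h2}, parts 1 and 2): Proposition \ref{P: cap inclusion} plus the compatibility of fundamental classes under restriction do the job, and the uniform sign $(-1)^{|\alpha|n}$ factors out. So far so good.

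The gap is in the connecting-homomorphism square, and it is more serious than "delicate sign bookkeeping." Proposition \ref{L: boundary}(2) concerns the connecting homomorphisms $\delta$, $\partial$ of a single \emph{pair} $(X,A)$; it does not address the Mayer--Vietoris connecting homomorphisms, which involve a different construction (write $\varphi=\varphi_A-\varphi_B$ and take $\delta\varphi_A$ in cohomology; write a cycle as a sum of a chain in $U$ and a chain in $V$ and take a boundary in homology). Applying Proposition \ref{L: boundary}(2) "to the open inclusion of $U\cap V$ into $U$" does not produce the MV boundary. More fundamentally, what makes this square hard in intersection (co)homology---and this is where your plan would actually fail---is that the cap product here is built from the \emph{algebraic diagonal} $\bar d$, not from Alexander--Whitney. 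In Hatcher's manifold argument, after writing the fundamental cycle as a sum of a chain in $U$ and a chain in $V$, the Alexander--Whitney diagonal of each piece is automatically supported in $U\times U$ or $V\times V$, which makes the MV bookkeeping mechanical. With $\bar d$ there is no such automatic locality: one has to \emph{prove} that $\bar d(\Gamma_{K\cup L})$ can be represented by a sum of tensors of chains supported in $U-L$, $U\cap V$, and $V-K$ with the right relative allowability. This is the content of the paper's Lemma \ref{h3}, whose proof is nontrivial (it uses a pushout presentation of a direct limit of chain complexes, the relative K\"unneth theorem, the five lemma, and a quasi-isomorphism statement cited from \cite{GBF31}). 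Your proposal omits this ingredient entirely, and without it the decomposition $(1\otimes\varphi)(\beta)=(1\otimes\varphi)(\beta_{U-L})+(1\otimes\varphi)(\beta_{U\cap V})+(1\otimes\varphi)(\beta_{V-K})$ that underlies the comparison of MV boundaries cannot be carried out.
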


Our proof will follow the general strategy of \cite{Ha} (but with our sign
conventions).  As in \cite{Ha}, the
commutativity up to sign of the three squares shown in diagram \eqref{h1} is 
an easy consequence of the three parts of the following lemma.  

\begin{lemma}
\label{h2}
Let $K$ and $L$ be compact subsets of $U$ and $V$. The following diagrams
commute.
\begin{enumerate}
\item
{\footnotesize
\begin{diagram}
& I_{\bar p}H^k(X,X-K\cap L )&\rTo &I_{\bar p}H^k(X,X-K )\oplus I_{\bar p}H^k(X,X-L )&\\
&\dTo&&\dTo&&\\
& I_{\bar p}H^k(U\cap V, U\cap V-K\cap L )& &I_{\bar p}H^k(U,U-K )\oplus
I_{\bar p}H^k(V,V-K )&\\
&\dTo_{\smallfrown \Gamma_{K\cap L}}&&
\dTo_{\smallfrown\Gamma_K\oplus-(\smallfrown\Gamma_L)}&\\
& I^{\bar q}H_{n-k}(U\cap V )&\rTo &I^{\bar q}H_{n-k}(U )\oplus I^{\bar q}H_{n-k}(V )&
\end{diagram}}
\item
{\footnotesize
\begin{diagram}
&I_{\bar p}H^k(X,X-K )\oplus I_{\bar p}H^k(X,X-L )&\rTo&I_{\bar p}H^k(X,X-K\cup L )&\\
&\dTo&&\\
&I_{\bar p}H^k(U,U-K )\oplus
I_{\bar p}H^k(V,V-K )&&\dTo_{\smallfrown \Gamma_{K\cup L}}&\\
&\dTo_{\smallfrown\Gamma_K\oplus -(\smallfrown\Gamma_L)}&\\
&I^{\bar q}H_{n-k}(U )\oplus I^{\bar q}H_{n-k}(V )&\rTo&I^{\bar q}H_{n-k}(X )&
\end{diagram}}
\item
\begin{diagram}[LaTeXeqno]\label{h4}
I_{\bar p}H^k(X,X-K\cup L)&\rTo^\delta&I_{\bar p}H^{k+1}(X,X-K\cap
L)&\rTo&I_{\bar p}H^{k+1}(U\cap V,U\cap V-K\cap L)\\
\dTo_{\smallfrown\Gamma_{K\cup L}}&&&&\dTo_{\smallfrown\Gamma_{K\cap L}}\\
I^{\bar q}H_{n-k}(X)&&\rTo^\bd&&I^{\bar q}H_{n-k-1}(U\cap V).
\end{diagram}

\end{enumerate}
\end{lemma}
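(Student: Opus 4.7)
The plan is to adapt Hatcher's argument for \cite[Lemma 3.36]{Ha} to the intersection-homology setting, using the tools developed earlier in the paper: the naturality of the cap product (Proposition \ref{P: cap inclusion}), the restriction compatibility of fundamental classes (Remark \ref{j7}, together with uniqueness from Proposition \ref{L: fundamental} and its non-normal extension in Subsection \ref{x2}), and the boundary formula for the cap product (Proposition \ref{L: boundary}).

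For Part 1, the plan is to observe that the upper vertical arrow is induced by inclusions of pairs and the bottom horizontal arrow is induced by inclusions of open sets. Applying Proposition \ref{P: cap inclusion} to each inclusion, cap product with $\Gamma_K$ (respectively $\Gamma_L$) commutes with these inclusions provided we track how the fundamental classes restrict. Since $K \cap L \subset K$ and $K \cap L \subset L$, Remark \ref{j7} (uniqueness of the restricted fundamental class) tells us that both $\Gamma_K$ and $\Gamma_L$ restrict to $\Gamma_{K \cap L}$. The standard $\pm$ in the Mayer--Vietoris middle map accounts for the sign on $\smallfrown \Gamma_L$, and the square commutes. Part 2 is entirely analogous: the fact that $\Gamma_{K \cup L}$ restricts to $\Gamma_K$ and to $\Gamma_L$ (again Remark \ref{j7}) combined with Proposition \ref{P: cap inclusion} gives the required commutativity.

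Part 3 is the main obstacle. Here the plan is to unwind the Mayer--Vietoris connecting homomorphism $\delta$ at the chain level and to pair this with a well-chosen chain representative of $\Gamma_{K \cup L}$. Specifically, I would choose a cycle $\xi$ representing $\Gamma_{K \cup L}$ in $I^{\bar 0}C_n(X, X - (K \cup L))$ that decomposes as $\xi = \xi_U + \xi_V$ with $\xi_U \in I^{\bar 0}C_n(U)$ and $\xi_V \in I^{\bar 0}C_n(V)$; such a decomposition exists by the standard small-chain / subdivision argument underlying the Mayer--Vietoris sequence for intersection homology (available from the excision / Mayer--Vietoris machinery referenced after Proposition \ref{P: cone}). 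Then $\partial \xi \in I^{\bar 0}C_{n-1}(X - (K \cup L))$, so $\partial \xi_U \equiv -\partial \xi_V$ modulo chains away from $K \cap L$, and each side lies in $I^{\bar 0}C_{n-1}(U \cap V)$ once restricted appropriately. The key lemma to verify is that the common class $[\partial \xi_U] \in I^{\bar 0}H_{n-1}(U \cap V, U \cap V - (K \cap L))$ equals $\Gamma_{K \cap L}$; this is checked by restricting to each point $x \in K \cap L$ and invoking the uniqueness part of Proposition \ref{L: fundamental} (normalizing first if $X$ is not normal, as in Subsection \ref{x2}).

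With this identification in hand, commutativity of \eqref{h4} reduces to a diagram chase combining Proposition \ref{L: boundary}(2) (which relates $\delta$ on cochains to $\partial$ on the cap product) with Proposition \ref{P: cap inclusion}: applying $\alpha \in I_{\bar p}H^k(X, X - (K \cup L))$ to the decomposed $\xi$, one checks that $\partial(\alpha \smallfrown \xi_U)$ represents both sides of the target equality up to the sign $(-1)^{|\alpha|}$ from Remark \ref{R: coboundary sign}. The main difficulty I anticipate is the careful chain-level bookkeeping: tracking the supports of $\xi_U, \xi_V$ so that the required Mayer--Vietoris cycle lands in the correct relative group, and matching all Koszul signs (from Proposition \ref{L: boundary} and from the Mayer--Vietoris connecting maps) to obtain commutativity up to a single consistent sign.
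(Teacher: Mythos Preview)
Your treatment of Parts 1 and 2 is essentially the same as the paper's: work one summand at a time, invoke Proposition \ref{P: cap inclusion}, and use the restriction compatibility of fundamental classes (Remark \ref{j7}). That is fine.

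Part 3, however, has a genuine gap. Your plan is to decompose a chain representative $\xi=\xi_U+\xi_V$ of $\Gamma_{K\cup L}$ and then compute with expressions like $\partial(\alpha\smallfrown\xi_U)$. But in this paper there is \emph{no chain-level cap product}: by definition $\alpha\smallfrown x=(1\otimes\alpha)\bar d(x)$, and $\bar d$ exists only after inverting the K\"unneth isomorphism on homology (Definition \ref{D: algdiag}). So the symbol $\alpha\smallfrown\xi_U$ is undefined, and the Hatcher-style chain manipulation you sketch cannot be carried out as written. Relatedly, Proposition \ref{L: boundary}(2) concerns the connecting map of a pair, not the Mayer--Vietoris $\delta$ in diagram \eqref{h4}, so it does not by itself give the commutativity you need.

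The paper's fix is to decompose not $\Gamma_{K\cup L}$ but rather $\bar d(\Gamma_{K\cup L})$. This is Lemma \ref{h3}: one finds chains $\beta_{U-L},\beta_{U\cap V},\beta_{V-K}$ in the appropriate tensor products $I^{\bar p}C_*(W)\otimes I^{\bar q}C_*(W,W-K\cup L)$ whose sum represents $\bar d(\Gamma_{K\cup L})$. With this in hand, the operator $(1\otimes\varphi)$ \emph{is} a chain-level map, and one can run the Hatcher computation with $\beta$'s in place of the Alexander--Whitney front/back faces. The existence of such a decomposition is not automatic; the paper proves it by showing that a certain colimit map $\mu$ over the category of the three open sets $U-L$, $U\cap V$, $V-K$ and their intersections is an isomorphism, using iterated pushouts, Mayer--Vietoris, and the relative K\"unneth theorem (Theorem \ref{P: relative kunneth}). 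This is the missing idea in your proposal: you need to lift the decomposition from $I^{\bar 0}C_*$ to the tensor product $I^{\bar p}C_*\otimes I^{\bar q}C_*$ through the K\"unneth map, and that requires an argument.
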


In the remainder of this section we prove Lemma \ref{h2}.

For part 1, it suffices to consider the two summands on the right hand side
separately.  We will verify commutativity for the first summand; the second is
similar.  Consider the following diagram, where all unmarked arrows are 
induced by inclusions.
\[
\xymatrix{
I_{\bar p}H^k(X,X-K\cap L )
\ar[d]
\ar[rr]
\ar[rd]
&&
I_{\bar p}H^k(X,X-K )
\ar[d]
\\
I_{\bar p}H^k(U\cap V, U\cap V-K\cap L )
\ar[d]^{\smallfrown \Gamma_{K\cap L}}
&
I_{\bar p}H^k(U,U-K\cap L )
\ar[l]
\ar[r]
\ar[rd]_{\smallfrown \Gamma_{K\cap L}}
&
I_{\bar p}H^k(U,U-K )
\ar[d]^{\smallfrown\Gamma_K}
\\
I^{\bar q}H_{n-k}(U\cap V )
\ar[rr]
&&I^{\bar q}H_{n-k}(U )
}
\]
Here the upper half obviously commutes, and the lower half commutes by
Proposition \ref{P: cap inclusion} (using the fact that the inclusion
$(U,U-K)\to (U,U-K\cap L)$ takes $\Gamma_K$ to $\Gamma_{K\cap L}$).

For part 2, it again suffices to work one summand at a time.  For the first
summand, consider the following diagram.
\[
\xymatrix{
I_{\bar p}H^k(X,X-K )
\ar[d]
\ar[r]
\ar[rdd]^{\smallfrown \Gamma_K}
&
I_{\bar p}H^k(X,X-K\cup L )
\ar[dd]^{\smallfrown \Gamma_{K\cup L}}
\\
I_{\bar p}H^k(U,U-K )
\ar[d]^{\smallfrown\Gamma_K}
&
\\
I^{\bar q}H_{n-k}(U )
\ar[r]
&
I^{\bar q}H_{n-k}(X )
}
\]
Both triangles commute by Proposition \ref{P: cap inclusion}, using the fact
that the inclusion
$(X,X-K\cup L)\to (X,X-K)$ takes $\Gamma_{K\cup L}$ to $\Gamma_K$.

Next we prove part 3.  We need a lemma which will be proved at the end of
this subsection.  

\begin{lemma}\label{h3}
There exist chains
\[
\beta_{U-L}\in I^{\bar p}C_*(U-L)\otimes I^{\bar q}C_*(U-L, U-K\cup L),
\]
\[
\beta_{U\cap V}\in I^{\bar p}C_*(U\cap V)\otimes I^{\bar q}C_*(U\cap V, U\cap
V-K\cup L)
\]
and
\[
\beta_{V-K}\in I^{\bar p}C_*(V-K)\otimes I^{\bar q}C_*(V-K,V-K\cup L)
\]
such that $\beta_{U-L}+\beta_{U\cap V}+\beta_{V-K}$ represents
${\bar d}(\Gamma_{K\cup L})\in I^{\bar p}H_*(X)\otimes I^{\bar q}H_*(X,X-K\cup
L)
$.
\end{lemma}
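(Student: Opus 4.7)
The strategy is to represent $\Gamma_{K\cup L}$ by a chain adapted to the open cover $\mathcal{W}=\{U-L,\,U\cap V,\,V-K\}$ of $X$, apply the geometric diagonal, and then invoke the relative K\"unneth theorem (Theorem \ref{P: relative kunneth}) locally on each member of $\mathcal{W}$.

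First I would verify that $\mathcal{W}$ does cover $X$: since $X=U\cup V$, $K\subset U$ and $L\subset V$, any $x\in X$ either lies outside $L$ (hence in $U-L$), outside $K$ (hence in $V-K$), or in $U\cap V$. Observe also that $(K\cup L)\cap (U-L)\subset K-L$, $(K\cup L)\cap(V-K)\subset L-K$, and $(K\cup L)\cap(U\cap V)$ is the remaining compact piece near $K\cap L$, so each closed set $W\cap(K\cup L)$ is precisely the part of $K\cup L$ that should be captured by $\beta_W$.

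Next I would apply the small-chains/subdivision theorem for intersection chains (valid because intersection homology satisfies Mayer--Vietoris; this is the same subdivision machinery used in the proof of Theorem \ref{P: relative kunneth} in Appendix \ref{Appendix A}) to choose a representative $\gamma\in I^{\bar 0}C_n(X)$ of $\Gamma_{K\cup L}$ with $\partial\gamma\in I^{\bar 0}C_{n-1}(X-K\cup L)$ and such that each simplex of $\gamma$ is contained in some $W\in\mathcal{W}$. Assigning each simplex of $\gamma$ to some $W$ that contains it, I would decompose
\[
\gamma=\gamma_{U-L}+\gamma_{U\cap V}+\gamma_{V-K},\qquad \gamma_W\in I^{\bar 0}C_n(W).
\]
Applying the geometric diagonal and invoking Proposition \ref{L: diag} with $\bar r=\bar 0$ yields chains $d\gamma_W\in I^{Q_{\bar p,\bar q}}C_n(W\times W)$, and their sum $d\gamma$ is a relative cycle in $I^{Q_{\bar p,\bar q}}C_n(X\times X,\,X\times(X-K\cup L))$ representing $d(\Gamma_{K\cup L})$.

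The key step is then to invoke Theorem \ref{P: relative kunneth} for each pair $(W\times W,\,W\times(W-K\cup L))$: the cross product
\[
\times:\ I^{\bar p}C_*(W)\otimes I^{\bar q}C_*(W,\,W-K\cup L)\longrightarrow I^{Q_{\bar p,\bar q}}C_*\bigl(W\times W,\,W\times(W-K\cup L)\bigr)
\]
is a quasi-isomorphism. I would use this to produce $\beta_W$ in the left-hand complex whose images sum to a chain homologous to $d\gamma$ in the global product relative complex. By the uniqueness half of the global K\"unneth isomorphism together with the definition of $\bar d$, this forces $\sum_W\beta_W$ to represent $\bar d(\Gamma_{K\cup L})$.

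The main obstacle is that $d\gamma_W$ is not individually a relative cycle in $I^{Q_{\bar p,\bar q}}C_*(W\times W,\,W\times(W-K\cup L))$: the component of $\partial\gamma_W$ lying in $W\cap(K\cup L)$ produces a diagonal boundary that is nonzero modulo $W\times(W-K\cup L)$, and these bad boundary pieces only cancel globally because $\sum_W\partial\gamma_W=\partial\gamma$ is supported in $X-K\cup L$. To handle this, I would use a chain-level Mayer--Vietoris argument for the cover $\mathcal{W}$: add and subtract compensating correction chains supported in the pairwise intersections $W_i\cap W_j$ to convert each $d\gamma_W$ into a genuine relative cycle in its local relative complex, while leaving the global sum unchanged up to a boundary. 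Each corrected piece is then lifted via the local K\"unneth quasi-isomorphism to $\beta_W$ in the required tensor complex, and the sum $\beta_{U-L}+\beta_{U\cap V}+\beta_{V-K}$ represents $\bar d(\Gamma_{K\cup L})$.
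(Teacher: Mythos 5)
Your proposal follows the same general strategy as the paper's proof (small chains for the cover $\{U-L,\,U\cap V,\,V-K\}$, geometric diagonal, local K\"unneth), but the way you try to implement the final lifting step has a genuine gap. You correctly identify the central difficulty: the pieces $d\gamma_W$ are not relative cycles in $I^{Q_{\bar p,\bar q}}C_*(W\times W,\,W\times(W-K\cup L))$, so you cannot simply invert the local K\"unneth quasi-isomorphism piece by piece. Your proposed fix---adding ``compensating correction chains'' in the pairwise intersections to make each $d\gamma_W$ an honest relative cycle---is not substantiated, and it is not clear that it is even possible at the chain level; the bad boundary pieces live in intersections, but there is no a priori reason they bound in the relevant relative complex. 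Moreover it is not what the lemma actually requires: the $\beta_W$ need not individually be cycles, only their sum must be one. Even granting the correction, you would still need to argue that independently chosen preimages $\beta_W$ of the corrected pieces assemble into a single cycle representing the correct class, which requires compatibility across intersections that the quasi-isomorphism alone does not supply.

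The paper sidesteps all of this by working with $\dlim_{W\in\mc C}$ of the chain complexes over the poset $\mc C$ generated by the three open sets and their intersections. A homology class in this colimit is represented by a tuple of chains, one per $W$, which is a cycle only after summation---exactly the data the lemma asks for---so nothing needs to be corrected. The paper then factors the diagonal through the subspace $Y=\bigcup_W(W\times W)\subset X\times X$, applies a small-chains isomorphism $\lambda$ for $Y$ (citing \cite[Proposition 6.1.1]{GBF31}), and shows that the cross-product map $\mu$ between the two colimits is a quasi-isomorphism by an iterated-pushout/Mayer--Vietoris argument using Theorem \ref{P: relative kunneth} and the five lemma. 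That colimit-level quasi-isomorphism is precisely the structural fact your argument is missing; the ``chain-level Mayer--Vietoris'' you gesture at is what the pushout/Mayer--Vietoris sequence for the colimit does rigorously. Note also that the small-chains input in the paper is a specific result about $Y$ and the cover $\{W\times W\}$, not merely a generic subdivision statement for $X$, and it is cited rather than rederived.
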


The inclusion $(X,X-K\cup L)\to (X,X-K\cap L)$ takes $\Gamma_{K\cup L}$ to
$\Gamma_{K\cap L}$, so the image of $\beta_{U-L}+\beta_{U\cap V}+\beta_{V-K}$
in $I^{\bar p}H_*(X)\otimes I^{\bar q}H_*(X,X-K\cap L)$ represents 
${\bar d}(\Gamma_{K\cap L})$.  But this image is just $\beta_{U\cap V}$, since the
other two terms map to $0$ in $I^{\bar p}C_*(X)\otimes I^{\bar q}C_*(X,X-K\cap
L)$.  Thus $\beta_{U\cap V}$ represents the class 
${\bar d}(\Gamma_{K\cap L})$ in $I^{\bar p}H_*(U\cap V)\otimes I^{\bar q}H_*(U\cap 
V, U\cap V-K\cup L)$.

Now let $\varphi\in I_{\bar p}C^k(X,X-K\cup L)$ be a cocycle; we want to 
calculate the image of $[\varphi]$ for the two ways of going around the 
diagram \eqref{h4}.  Let $A$ and $B$ denote $X-K$ and $X-L$, so that 
$\varphi\in I_{\bar p}C^k(X,A\cap B)$.  

As in \cite{Ha}, we 
have $\delta[\varphi]=[\delta\varphi_A]$, where 
$\varphi=\varphi_A-\varphi_B$ with $\varphi_A\in I_{\bar p}C^k(X,A)$ and 
$\varphi_B\in I_{\bar p}C^k(X,B)$. Continuing on to $I^{\bar q}H_{n-k-1}(U\cap
V)$ we obtain $[(1\otimes \delta\varphi_A)(\beta_{U\cap V})]$, which is the
same as
\[
(-1)^{k+1}[(1\otimes\varphi_A)(\partial\beta_{U\cap V})]
\]
because $\partial((1\otimes\varphi_A)(\beta_{U\cap V}))=
(1\otimes\delta\varphi_A)(\beta_{U\cap V})
+(-1)^k(1\otimes \varphi_A)(\partial\beta_{U\cap V})$.

Going around the diagram \eqref{h4} the other way, let $\beta$ denote
$\beta_{U-L}+\beta_{U\cap V}+\beta_{V-K}$.
$[\varphi]$ first maps to $[(1\otimes\varphi)(\beta)]$.  To apply the
Mayer-Vietoris boundary $\partial$ to this, we first write
$(1\otimes\varphi)(\beta)$ as a sum of a chain in $U$ and a chain in $V$:
\[
(1\otimes\varphi)(\beta)
=
(1\otimes\varphi)(\beta_{U-L})
+
((1\otimes\varphi)(\beta_{U\cap V})
+
(1\otimes\varphi)(\beta_{V-K})).
\]
Then we take the boundary of the first of these two terms, obtaining the
homology class
$[\partial(1\otimes\varphi)(\beta_{U-L})]$.  To compare this to 
$(-1)^{k+1}[(1\otimes\varphi_A)(\partial\beta_{U\cap V})]$, we have
\begin{alignat*}{2}
\partial(1\otimes\varphi)(\beta_{U-L})
&=(-1)^k (1\otimes \varphi)(\partial\beta_{U-L})
&&\qquad\text{since $\delta\varphi=0$}
\\
&=(-1)^k(1\otimes\varphi_A)(\partial\beta_{U-L})
&&\qquad\text{since $(1\otimes\varphi_B)(\partial\beta_{U-L})=0$, $\varphi_B$
being}
\\
&&&\qquad\text{zero on chains in $B=X-L$}
\\
&=(-1)^{k+1}(1\otimes\varphi_A)(\partial\beta_{U\cap V}),
&&
\end{alignat*}
where the last equality comes from the fact that
$\partial(\beta_{U-L})+\partial(\beta_{U\cap V})=\bd(\beta)-\partial(\beta_{V-K})$
and $\varphi_A$ vanishes on chains in $V-K\subset A$.

This concludes the proof of Lemma \ref{h2}.  \hfill\qedsymbol

It remains to prove Lemma \ref{h3}.

Let $\cC$ be the category with objects $U-L$, $U\cap V$, $V-K$ and their
intersections and with morphisms the inclusion maps. 
It suffices to show that $\bar{d}(\Gamma_{K\cup L})$ is in the image of the map
\[
\kappa:\dlim_{W\in \cC}\, I^{\bar p}H_*(W)\otimes I^{\bar q}H_*(W, W-K\cup L)
\to
I^{\bar p}H_*(X)\otimes I^{\bar q}H_*(X,X-K\cup L).
\]
Let $Y$ denote the subspace
\[
((U-L)\times (U-L))\cup
((U\cap V)\times (U\cap V))\cup
((V-K)\times (V-K))
\]
of $X\times X$ and
consider the commutative diagram
\[
\scriptsize
\xymatrix{
I^{\bar 0}H_*(X,X-K\cup L)
\ar[r]^-d
\ar[rd]^d
&
I^{Q_{\bar p,\bar q}}H_*(X\times X, X\times (X-K\cup L))
&
I^{\bar p}H_*(X)\otimes I^{\bar q}H_*(X,X-K\cup L)
\ar[l]_-\cong
\\
&
I^{Q_{\bar p,\bar q}}H_*(Y,Y-(X\times (K\cup L)))
\ar[u]
&
\\
&
H_*(\dlim_{W\in \cC}\, I^{Q_{\bar p,\bar q}}C_*(W\times W,W\times (W-K\cup L)))
\ar[u]^\lambda
&
H_*(\dlim_{W\in \cC}\, I^{\bar p}C_*(W)\otimes I^{\bar q}C_*(W, W-K\cup L)).
\ar[uu]^\kappa
\ar[l]_-\mu
}
\]
$\bar{d}(\Gamma_{K\cup L})$ is the image of $\Gamma_{K\cup L}$ along the top
row.  The map $\lambda$ is an isomorphism by 
\cite[Proposition 6.1.1]{GBF31}, so to show that $\bar{d}(\Gamma_{K\cup L})$ is
in the image of $\kappa$ it suffices to show that the map $\mu$ is an
isomorphism.

Let us write $W_1$, $W_2$ and $W_3$ for $U-L$, $U\cap V$ and $V-K$
respectively.
Let $\cC'$ be the subcategory of $\cC$ with objects $W_1$, $W_2$ and $W_1\cap
W_2$, and let $\cC''$ be the subcategory of $\cC$ with objects $W_1\cap W_3$,
$W_2\cap W_3$ and $W_1\cap W_2\cap W_3$.  For any functor $F$ from $\cC$ to
chain complexes,
$\dlim_{W\in \cC} F(W)$ can be written as an iterated pushout: it is the pushout of the 
diagram
\[
\xymatrix{
\dlim_{W\in \cC''} F(W)
\ar[d]
\ar[r]
&
F(W_3)
\\
\dlim_{W\in\cC'} F(W)
}
\]
and $\dlim_{W\in\cC''} F(W)$ and $\dlim_{W\in\cC'} F(W)$ are also pushouts.

Next recall that, if
\[
\xymatrix{
A
\ar[r]
\ar[d]
&
C
\ar[d]
\\
B
\ar[r]
&
D
}
\]
is a pushout diagram of chain complexes for which $A\to B\oplus C$ is a
monomorphism, there is a Mayer-Vietoris sequence
\[
\cdots\to H_iA\to H_iB\oplus H_iC \to H_iD \to H_{i-1}A\to\cdots
\]
Combining this with Theorem \ref{P: relative kunneth} and the five lemma, we
see that the map
\[
\dlim_{W\in \cC'}\, I^{\bar p}C_*(W)\otimes I^{\bar q}C_*(W, W-K\cup L)
\to
\dlim_{W\in \cC'}\, I^{Q_{\bar p,\bar q}}C_*(W\times W, W\times (W-K\cup L)),
\]
and the analogous map with $\cC'$ replaced by $\cC''$, are quasi-isomorphisms.
Now one further application of the Mayer-Vietoris sequence, Theorem \ref{P:
relative kunneth}, and the five lemma shows that $\mu$ is an
isomorphism as required.
\hfill\qedsymbol

\section{Stratified pseudomanifolds-with-boundary and Lefschetz duality}\label{S: boundary}

In subsection \ref{j9}, we give the definition of stratified
pseudomanifold-with-boundary; we call these $\partial$-stratified
pseudomanifolds, following Dold's use of $\partial$-manifold to mean manifold
with boundary \cite[Definition VIII.1.9]{Dold}.  
In subsection \ref{fund boundary}, we show that a compact $\partial$-stratified
pseudomanifold has a fundamental class, and in subsection \ref{Lef} we show
that cap product with the fundamental class induces a Lefschetz duality
isomorphism.

\subsection{$\bd$-stratified pseudomanifolds}
\label{j9}

\begin{definition}\label{def boundary}
An $n$-dimensional
\emph{$\bd$-stratified pseudomanifold} is a pair $(X,B)$ together with a
filtration on $X$ such that
\begin{enumerate}
\item $X-B$, with the induced filtration, is an $n$-dimensional stratified
pseudomanifold (in the sense of Section \ref{S: pm}),
\item $B$, with the induced filtration, is an $n-1$ dimensional stratified
pseudomanifold (in the sense of Section \ref{S: pm}),
\item\label{I: collar} $B$ has an {\it open collar neighborhood} in $X$, that is, a
neighborhood $N$ with a homeomorphism of filtered spaces $N\to
B\times [0,1)$ (where $[0,1)$ is given the trivial filtration) that takes
$B$ to $B\times \{0\}$.
\end{enumerate}
$B$ is called the 
\emph{boundary} of $X$ and will be denoted by $\bd X$.  
\end{definition}

We will often abuse notation by referring to the ``$\bd$-stratified 
pseudomanifold $X$,'' leaving $B$ tacit. 

Note that a stratified pseudomanifold $X$ (as defined in Section \ref{S: pm}) is a $\bd$-stratified pseudomanifold with $\bd X=\emptyset$.  

\begin{definition}
The {\it strata} of a $\partial$-stratified pseudomanifold $X$ are the 
components of the spaces $X^i-X^{i-1}$.
\end{definition}

There is some risk of confusion between the situation in which a stratified pseudomanifold has a codimension one stratum and the situation in which a $\bd$-stratified pseudomanifold has a non-empty boundary. To understand the difference between these situations, it is critical to note that the choice of stratification of the underlying space plays a role. In particular it is possible for the same topological space to possess different stratifications, some with respect to which it has a non-empty boundary and some with respect to which it does not.  We illustrate this situation with the following elementary example:

\begin{example}\label{E: example}
Let $M$ be a compact  $n$-manifold with boundary (in the classical sense), and let $P$ be its boundary.
\begin{enumerate}
\item
Suppose we filter $M$ trivially so that $M$ itself is the only non-empty stratum, i.e. the filtration is $\emptyset \subset M$. Then 
$(M,P)$ is a $\bd$-stratified pseudomanifold. Note that all the conditions of Definition \ref{def boundary} are fulfilled: $M-P$ is an $n$-manifold, $P$ is an $n-1$ manifold, and $P$ is collared in $M$ by classical manifold theory (see \cite[Proposition 3.42]{Ha}).

\item
On the other hand, suppose we take the same manifold $M$ and now give it the 
filtration $P\subset M$. Let us denote the space together with its filtration 
information by $X$. Then it is easy to check that $X$ is a stratified
pseudomanifold in the sense of Section \ref{S: pm}; equivalently, $X$ is a
$\partial$-stratified pseudomanifold with $\partial X=\emptyset$.
\end{enumerate}
\end{example}

In this example one can restratify a $\bd$-stratified pseudomanifold to
get a stratified pseudomanifold.  We do not know of an example where this
is not possible.
On the other hand, if codimension one strata are not allowed then
the following result shows that the boundary of a $\partial$-stratified 
pseudomanifold is an invariant of the underlying topological space.

\begin{proposition}\label{P: bd inv}
Let $(X,B)$ and $(X',B')$ be $\partial$-stratified pseudomanifolds of dimension
$n$ with no codimension one strata, and let $h:X\to X'$ be a homeomorphism 
(which is not required to be filtration preserving).  Then $h$ takes $B$ onto 
$B'$.
\end{proposition}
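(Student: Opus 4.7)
The plan is to give a purely topological characterization of the boundary $B\subset X$ in terms of local singular homology, which is automatically preserved by any homeomorphism. Specifically, I aim to show that $x\in B$ if and only if $H_i(X,X-\{x\};\Z/2)=0$ for every $i$; granting this, $h$ induces isomorphisms on every local homology group and the characterization immediately forces $h(B)=B'$.

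For the direction ``$x\in B$ implies vanishing,'' I will use the collar supplied by Definition~\ref{def boundary}(\ref{I: collar}): the point $x$ has a neighborhood $V\cong U\times[0,1)$ with $x$ corresponding to $(y,0)$. After excision one observes the identification of topological pairs $(V,V-\{(y,0)\})\cong (U,U-\{y\})\times ([0,1),(0,1))$; since $(0,1)\hookrightarrow[0,1)$ is a homotopy equivalence, $H_*([0,1),(0,1);\Z/2)=0$, and the relative K\"unneth formula over $\Z/2$ delivers the vanishing.

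For the converse, let $x\in X-B$. Since $X-B$ is an open neighborhood of $x$ in $X$ and itself an $n$-dimensional stratified pseudomanifold without codimension one strata, by excision I may compute locally inside $X-B$. If $x$ lies in the top stratum, a Euclidean chart gives $H_n\cong\Z/2$. Otherwise $x$ lies in a singular stratum of codimension $k\geq 2$ with a distinguished neighborhood $\R^i\times cL$, where $\dim L=k-1\geq 1$ and $L$ inherits the no-codimension-one-strata condition from $X-B$. Combining K\"unneth for $(\R^i,\R^i-0)$ with the retraction $cL-v\simeq L$ yields
\[
H_*(X,X-\{x\};\Z/2)\cong \tilde H_{*-i-1}(L;\Z/2),
\]
reducing the question to showing that $\tilde H_*(L;\Z/2)\neq 0$ in some degree. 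If $L$ is disconnected this is immediate in degree zero. If $L$ is connected, Theorem~\ref{x1} applied with $R=\Z/2$ (the $\Z/2$-orientation being automatic) supplies an intersection-homology fundamental class $\Gamma_L\in I^{\bar 0}H_{k-1}(L;\Z/2)$, and I will argue that its image $j_*(\Gamma_L)$ under the chain inclusion $I^{\bar 0}C_*(L)\hookrightarrow C_*(L)$ is a nonzero element of $H_{k-1}(L;\Z/2)$.

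The main obstacle is this last step: transferring the nonvanishing statement from intersection homology (where Theorem~\ref{x1} supplies it gratis) to ordinary singular homology. The key observation is that at any regular point $x$ of $L$, a Euclidean neighborhood of $x$ in $L$ makes every local singular chain automatically $\bar 0$-allowable, so the chain inclusion induces an isomorphism $I^{\bar 0}H_{k-1}(L,L-\{x\};\Z/2)\xrightarrow{\cong}H_{k-1}(L,L-\{x\};\Z/2)$. By Theorem~\ref{T: global o} the class $\Gamma_L$ restricts to the nonzero local orientation class $o_x$ on the intersection-homology side, so $j_*(\Gamma_L)$ restricts to a nonzero class on the ordinary-homology side and therefore cannot itself vanish, completing the argument.
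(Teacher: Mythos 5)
Your proof is correct, and it takes a genuinely different route from the paper. The paper reduces to showing that a regular point $x$ of $B$ cannot map to $X'-B'$: it uses the collar to get $H_n(X,X-\{y\};\Z)=0$ near $x$, concludes by topological invariance that $h(E)$ lands in the singular set $S$ of $X'-B'$, and then derives a contradiction from cohomological dimension theory (Bredon's $\dim_\Z$), including a small new fact ($\dim_\Z$ of the $i$-skeleton of a pseudomanifold is $\le i$). You instead give a complete topological characterization $B=\{x : H_*(X,X-\{x\};\Z/2)=0\}$, which immediately forces $h(B)=B'$ with no further argument about regular points or dimension. The price you pay is that you must handle \emph{all} points of $X-B$, including singular ones, rather than just regular ones; you discharge this by reducing to $\tilde H_*(L;\Z/2)\neq 0$ for the link $L$ (a compact pseudomanifold of positive dimension, which inherits the no-codimension-one condition from $X$), and you obtain that nonvanishing by pushing the intersection-homology fundamental class $\Gamma_L$ of Theorem~\ref{T: global o} into ordinary homology via the chain inclusion $I^{\bar 0}C_*(L)\hookrightarrow C_*(L)$ (valid since $\bar 0\le\bar t$ on $L$) and checking nonvanishing locally at a regular point where the inclusion is an isomorphism. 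In short: the paper's approach buys economy (only local top-degree homology at regular points plus black-box dimension theory); yours buys elementarity (no dimension theory at all) and a clean intrinsic characterization of the boundary, at the cost of invoking the existence of the fundamental class for the link, which is one of the more substantial results of Section~\ref{S: fund}.
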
 

\begin{proof}
It suffices to show that $h$ takes $B$ to 
$B'$, as the equivalent result for $h^{-1}$ shows then that $h$ takes $B$ \emph{onto} $B'$. 

It thus suffices to show that $h$ takes the union of the regular strata of 
$B$ to
$B'$, since the regular strata are dense in $B$ and $B'$ is closed. So let
$x$ be in a regular stratum of $B$ and suppose that $h(x)$ is not in $B'$.
Then there is a Euclidean neighborhood $E$ of $x$ in $B$ such that
$h(E)\subset X'-B'$.  The existence of an open collar neighborhood of $B$
shows that the local homology group $H_n(X,X-\{y\})$ is 0 for each $y\in E$,
so by topological invariance of homology $h(E)$ must be contained in the
singular set $S$ of $X'-B'$.  

Next we use the dimension theory of \cite[Section II.16]{BR}.  We will use the
fact that each skeleton of a pseudomanifold (and in particular the singular 
set) is locally compact.

$\dim_\Z E$ (as defined in \cite[Definition II.16.6]{BR}) is $n-1$ by
\cite[Corollary II.16.28]{BR}, so $\dim_\Z h(E)$ is also $n-1$, and 
by \cite[Theorem II.16.8]{BR}
(using the fact that $S$
is locally compact) this implies that
$\dim_\Z S$ is $\geq n-1$.  To obtain a contradiction it suffices to show
that $\dim_\Z$ of the $i$-skeleton of a pseudomanifold is $\leq i$ (a
fact that doesn't seem to be written down explicitly in the literature).

So let $Y$ be a pseudomanifold and 
assume by induction that $\dim_\Z Y^i\leq 
i$ for some $i$.
Let $c$ denote the family of compact supports
and let $\dim_{c,\Z}$ be as in \cite[Definition 16.3]{BR}. Then $\dim_\Z$ 
is equal to $\dim_{c,\Z}$ for any locally compact space 
by \cite[Definition II.16.6]{BR}. 
Since $Y^i$ is a closed subset of $Y^{i+1}$ and
$Y^{i+1}-Y^i$ is a (possible empty) $(i+1)$-manifold, 
\cite[Exercise II.11 and Corollary II.16.28]{BR} imply that $\dim_{c,\Z}
Y^{i+1}$ is $\leq i+1$ as required.
\end{proof}

\begin{remark}
All of the intersection homology machinery developed in Sections \ref{S: 
background}--\ref{j3} of this paper applies immediately to $\bd$-stratified 
pseudomanifolds. 
\end{remark}

\subsection{Fundamental classes of $\bd$-stratified pseudomanifolds}
\label{fund boundary}

\begin{definition}
An {\em $R$-orientation} of a $\bd$-stratified pseudomanifold $X$ is an
$R$-orientation of $X-\bd X$.
\end{definition}

Given an $R$-orientation of $X$ and a point $x\in X-\bd X$, Definition 
\ref{D: non-normal local class} gives a local orientation class $o_x\in 
I^{\bar 0}H_n(X-\bd X,X-\{x\}-\bd X;R)$.  We will denote the image of this
class under the inclusion map $I^{\bar 0}H_n(X-\bd X,X-\{x\}\cup\bd X;R)\to
I^{\bar 0}H_n(X,X-\{x\};R)$ by $o'_x$.

\begin{proposition}\label{x5}
Let $X$ be a compact $R$-oriented $\bd$-stratified pseudomanifold of 
dimension $n$.  There
is a unique class $\Gamma_X\in I^{\bar 0}H_n(X,\bd X;R)$ that restricts to
$o'_x$ for every $x\in X-\bd X$.
\end{proposition}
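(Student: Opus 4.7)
The plan is to reduce the statement to Theorem \ref{T: global o} applied to the stratified pseudomanifold $X-\bd X$, using the collar of $\bd X$ in $X$ to identify $I^{\bar 0}H_n(X,\bd X;R)$ with an appropriate fundamental class group in the interior.

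Fix an open collar $N\cong \bd X\times[0,1)$ as guaranteed by Definition \ref{def boundary}\eqref{I: collar}, and for $\epsilon\in(0,1)$ set $K_\epsilon=X-(\bd X\times[0,\epsilon))$. Since $X$ is compact and $\bd X\times[0,\epsilon)$ is open, $K_\epsilon$ is a compact subset of the $R$-oriented stratified pseudomanifold $X-\bd X$. By Theorem \ref{T: global o}, there is a unique class $\Gamma_{K_\epsilon}\in I^{\bar 0}H_n(X-\bd X,(X-\bd X)-K_\epsilon;R)$ restricting to $o_y$ for every $y\in K_\epsilon$.

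Next I will build a canonical isomorphism $I^{\bar 0}H_n(X,\bd X;R)\cong I^{\bar 0}H_n(X-\bd X,(X-\bd X)-K_\epsilon;R)$ out of two steps. First, excision of $\bd X$ from the pair $(X,X-K_\epsilon)$ is legal because $X-K_\epsilon=\bd X\times[0,\epsilon)$ is an open neighborhood of $\bd X$ in $X$, so it yields $I^{\bar 0}H_n(X,X-K_\epsilon;R)\cong I^{\bar 0}H_n(X-\bd X,(X-\bd X)-K_\epsilon;R)$. Second, the collar projection $\bd X\times[0,\epsilon)\to\bd X$ is a stratified homotopy equivalence in the sense of Appendix \ref{A: sphe}, and therefore the inclusion of pairs $(X,\bd X)\hookrightarrow(X,X-K_\epsilon)$ induces an isomorphism $I^{\bar 0}H_n(X,\bd X;R)\cong I^{\bar 0}H_n(X,X-K_\epsilon;R)$. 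Define $\Gamma_X$ to be the preimage of $\Gamma_{K_\epsilon}$ under this composite. Independence of the choice of $\epsilon$ is routine: for $\epsilon'<\epsilon$, $K_\epsilon\subset K_{\epsilon'}$, the inclusion-induced map carries $\Gamma_{K_{\epsilon'}}$ to $\Gamma_{K_\epsilon}$ by Remark \ref{j7}, and naturality of excision and of the collar deformation makes the two candidate definitions of $\Gamma_X$ agree.

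For the restriction property, given $x\in X-\bd X$ choose $\epsilon$ small enough that $x\in K_\epsilon$ and factor the restriction map as the composite of inclusions
\[
(X,\bd X)\hookrightarrow (X,X-K_\epsilon)\hookrightarrow (X,X-\{x\}).
\]
Chasing $\Gamma_X$ along this composite, the first arrow produces the class corresponding to $\Gamma_{K_\epsilon}$ under excision, and naturality of excision together with the defining property of $\Gamma_{K_\epsilon}$ identifies its further image in $I^{\bar 0}H_n(X,X-\{x\};R)$ with the image of $o_x$, i.e.\ with $o'_x$. For uniqueness, if $\Gamma_X'$ shares the restriction property, its image in $I^{\bar 0}H_n(X-\bd X,(X-\bd X)-K_\epsilon;R)$ restricts to $o_y$ for every $y\in K_\epsilon$, so by the uniqueness clause of Theorem \ref{T: global o} that image equals $\Gamma_{K_\epsilon}$; since the composite identification is an isomorphism, $\Gamma_X=\Gamma_X'$.

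The only real point requiring attention is the verification that the two isomorphisms exist, which amounts to the excision hypothesis ($\bd X$ lies in the interior of the collar, immediate from Definition \ref{def boundary}) and that the collar retraction qualifies as a stratified homotopy equivalence invoking Appendix \ref{A: sphe}; everything else is formal naturality.
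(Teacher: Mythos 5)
Your argument is correct and matches the paper's proof in essence: you parameterize by $\epsilon$ (so $X-K_\epsilon = \bd X\times[0,\epsilon)$ is a smaller open collar) whereas the paper parameterizes directly by the collar $N$ and works with the compact set $X-N$, but both proofs reduce to Theorem \ref{T: global o} applied to $X-\bd X$, chain together the same excision and stratified-homotopy-equivalence isomorphisms, and handle independence, restriction, and uniqueness in the same way. This is a notational variant of the paper's proof rather than a different route.
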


\begin{proof}
Let $N$ be an open collar neighborhood of $\bd X$.  Theorem \ref{T: global o}
gives a fundamental class $\Gamma_{X-N}$ in $I^{\bar 0}H_n(X-\bd X,N-\bd X;R)$.
Let $\Gamma_X$ be the image of $\Gamma_{X-N}$ under the composite
\[
I^{\bar 0}H_n(X-\bd X,N-\bd X;R)\to
I^{\bar 0}H_n(X,N;R)
\xleftarrow{\cong} 
I^{\bar 0}H_n(X,\bd X;R),
\]
where the second map (which is induced by inclusion) is an isomorphism by a stratified
 homotopy equivalence (see Appendix \ref{A: sphe}).  It is easy to check that $\Gamma_X$ 
is independent of $N$, using the fact that the intersection of two open collar
neighborhoods contains another.  If $x\in X-\bd X$, the fact that $\Gamma_X$
restricts to $o'_x$ follows from the fact that there is an $N$ not containing
$x$.  Uniqueness follows from the uniqueness property in Theorem \ref{T: 
global o} and the fact that the
maps 
$I^{\bar 0}H_n(X-\bd X,X-\{x\}\cup\bd X;R)\to I^{\bar 0}H_n(X,X-\{x\};R)$
and
$I^{\bar 0}H_n(X-\bd X,N-\bd X;R)\to
I^{\bar 0}H_n(X,N;R)$
are isomorphisms by excision.
\end{proof}

$\Gamma_X$ will be called the {\it fundamental class} of $X$.

\begin{remark}
Corollary \ref{x4} has an analogue for $\bd$-stratified pseudomanifolds.  We 
will not give details here because this fact is not needed for our work.
\end{remark}

We conclude this section with a result that will be needed in \cite{GBF31}.

First we observe that an $R$-orientation of $X$ induces an $R$-orientation of
$\bd X$, because the union of the regular strata of $X$ and the regular strata
of $\partial X$ is a (nonsingular) $R$-oriented $\bd$-manifold.

\begin{proposition}
Let $X$ be a compact $R$-oriented $\bd$-stratified pseudomanifold of dimension
$n$, and give 
$\bd X$ the induced orientation.  Then the map
\[
\bd: I^{\bar 0}H_n(X,\bd X;R)\to I^{\bar 0}H_{n-1}(\bd X;R)
\]
takes $\Gamma_X$ to $\Gamma_{\bd X}$.
\end{proposition}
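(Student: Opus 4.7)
The plan is to reduce the identity $\partial \Gamma_X = \Gamma_{\partial X}$ to a local computation at regular points of $\partial X$. By Corollary \ref{T: fund id2} applied to $\partial X$, it will suffice to verify that the image of $\partial\Gamma_X$ in $I^{\bar 0}H_{n-1}(\partial X, \partial X - \{y\}; R)$ equals the local orientation class $o_y$ for one point $y$ in each connected component of $\partial X - (\partial X)^{n-2}$. For such a $y$, the collar structure together with the regular stratum structure produces an open neighborhood $U$ of $y$ in $X$ homeomorphic to $\R^{n-1} \times [0,1) \cong \R^n_+$, contained in the union of the regular strata of $X$ and of $\partial X$; on $U$ intersection homology coincides with ordinary singular homology, placing us in a classical $\partial$-manifold setting.

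The main obstacle is that $\partial X$ has empty interior in $X$, so one cannot directly excise along $X - U$ from the pair $(X, \partial X)$. To sidestep this, I will replace $\partial X$ by an open collar $N \cong \partial X \times [0,1)$: since the inclusion $\partial X \hookrightarrow N$ is a stratified homotopy equivalence (Appendix \ref{A: sphe}), the long exact sequence of $(X, \partial X)$ is naturally isomorphic to that of $(X, N)$, and by the construction in Proposition \ref{x5}, $\Gamma_X$ corresponds under this isomorphism to the image of $\Gamma_{X - N}$ in $I^{\bar 0}H_n(X, N)$. I will pick a chain representative $\xi' \in I^{\bar 0}C_n(X - \partial X)$ of $\Gamma_{X-N}$, so $\partial\xi' \in I^{\bar 0}C_{n-1}(N - \partial X)$, and then apply the standard chain homotopy $h$ coming from the stratified homotopy equivalence $\partial X \hookrightarrow N$ to produce a modified representative $\xi = \xi' + h(\partial \xi')$ of $\Gamma_X$ with $\partial \xi = \pi_{\#}(\partial \xi') \in I^{\bar 0}C_{n-1}(\partial X)$, where $\pi : N \to \partial X$ is the collar retraction. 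This identifies $\partial \Gamma_X$ with $\pi_*[\partial \xi']$.

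It will then remain to show that $\pi_*[\partial \xi']$ restricts to $o_y$ at each chosen regular $y$. Because $N$ is open, excision along the closed set $X - U$ reduces this question to the manifold pair $(U, U \cap N)$. Under the collar identification $U \cong (U \cap \partial X) \times [0,1)$ with $U \cap \partial X \cong \R^{n-1}$, the restriction of $\xi'$ to $U$ represents a local fundamental class of a classical $n$-dimensional half-space pair, and the familiar boundary-of-the-fundamental-class computation for oriented $\partial$-manifolds produces the local fundamental class on $U \cap \partial X$. The final step will be checking that the induced orientation on $\partial X$ (defined preceding the proposition as the boundary orientation of the $\partial$-manifold formed by the union of regular strata) matches this boundary fundamental class; this is the standard sign compatibility between an oriented $\partial$-manifold and its boundary, which identifies $\pi_*[\partial \xi']$ with $o_y$ and completes the proof.
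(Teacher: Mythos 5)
Your opening move is the same as the paper's: both proofs invoke Corollary \ref{T: fund id2} (applied to $\bd X$) to reduce the identity $\bd\Gamma_X=\Gamma_{\bd X}$ to checking that $\bd\Gamma_X$ restricts to the local orientation class at one regular point $y$ of each component of $\bd X$. The chain-level manipulation that follows -- building $\xi=\xi'+h(\bd\xi')$ from a representative $\xi'$ of $\Gamma_{X-N}$ and identifying $\bd\Gamma_X$ with $\pi_*[\bd\xi']$ -- is correct (allowability of $h(\bd\xi')$ is ensured by Appendix \ref{A: sphe}, and the usual $\bd h+h\bd=\pi_\#-\mathrm{id}$ computation gives $\bd\xi=\pi_\#(\bd\xi')$). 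It is also more explicit than what the paper does: the paper never chooses chain representatives at all.

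The gap is in the localization step. You write that "excision along the closed set $X-U$ reduces this question to the manifold pair $(U,U\cap N)$." That excision does not hold: to excise $X-U$ from the pair $(X,N)$ you would need $\overline{X-U}\subset\mathrm{int}(N)=N$, i.e., $X-N\subset U$, which is false for a small neighborhood $U$ of $y$. (Also, if $U\cong\R^{n-1}\times[0,1)$ lives inside the collar then $U\cap N=U$ and the pair is trivial.) Beyond the excision issue, the statement "the restriction of $\xi'$ to $U$ represents a local fundamental class" is not automatic and is not a matter of literally restricting the global chain $\xi'$ to $U$; it requires an argument from the uniqueness characterization of fundamental classes. The paper handles this localization differently: it introduces a small \emph{closed} nonsingular $\bd$-manifold $M$ (the image of $E\times[0,1/2]$ under the collar homeomorphism, with $E$ a closed Euclidean ball around $y$ in $\bd X$), relates the pair $(X,\bd X)$ to $(X,X-M^\circ)$ and then to $(M,\bd M)$ by inclusion maps that are isomorphisms via a \emph{combination} of excision and stratified homotopy equivalence, verifies that $\Gamma_X$ maps to $\Gamma_M$ (by the uniqueness property of fundamental classes), and finally invokes the classical fact $\bd\Gamma_M=\Gamma_{\bd M}$ for the nonsingular $\bd$-manifold $M$. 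Replacing your excision claim with this commutative-diagram argument (or an equivalent one using a well-chosen compact $M$) would close the gap.
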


\begin{proof}
By Corollary \ref{T: fund id2}, it suffices to show that $\bd \Gamma_X$
restricts to the local orientation class in $I^{\bar 0}H_{n-1}(\bd X,\bd
X-\{x\};R)$ for each $x$ that's in a regular stratum of $\bd X$.  So let $x$
be such a point.
Let $E$ be
a closed Euclidean ball around $x$ in $\bd X$, and let $E^\circ$ be the interior of $E$.
Let $N$ be an open collar neighborhood of $\bd X$, and 
let $M$ be the image of $E\times [0,1/2]$ under the homeomorphism $\bd X\times
[0,1)\to N$; then $M$ is a (nonsingular) $\bd$-manifold and the $R$-orientation
of $X$ restricts to an $R$-orientation of $M$.  Let $M^\circ$ denote the
interior of $M$.  Now consider the following commutative diagram (where the 
$R$ coefficients are
tacit).
\[
\scriptsize{
\xymatrix{
I^{\bar 0}H_n(X,\bd X)
\ar[dd]_\bd
\ar[r]
&
I^{\bar 0}H_n(X,X-M^\circ)
\ar[d]_\bd
&
I^{\bar 0}H_n(M,\bd M)
\ar[l]_-\cong
\ar[d]_\bd
&
\\
&
I^{\bar 0}H_{n-1}(X-M^\circ)
\ar[d]
&
I^{\bar 0}H_{n-1}(\bd M)
\ar[d]
\ar[l]
&
\\
I^{\bar 0}H_{n-1}(\bd X)
\ar[r]
\ar[ru]
&
I^{\bar 0}H_{n-1}(X-M^\circ,X-(M^\circ\cup E^\circ))
&
I^{\bar 0}H_{n-1}(\bd M,\bd M-E^\circ)
\ar[l]_-\cong
\ar[r]
&
I^{\bar 0}H_{n-1}(E^\circ,E^\circ-\{x\}).
}
}
\]
Here the second arrows in the first and last rows (which are induced by
inclusion) are isomorphisms by a
combination of excision and stratified homotopy equivalence.
It's straightforward to check that the lower composite is the usual restriction
map $I^{\bar 0}H_{n-1}(\bd X) \to I^{\bar 0}H_{n-1}(E^\circ,E^\circ-\{x\})$, so
it suffices to show that this composite takes $\bd\Gamma_X$ to the local
orientation class at $x$.  But it's straightforward to check that the upper
composite takes $\Gamma_X$ to $\Gamma_M$, and a standard fact in manifold
theory (using the fact that $I^{\bar 0}H_*=H_*$ for spaces with trivial
stratification) says that the rightmost $\bd$ takes $\Gamma_M$ to 
$\Gamma_{\bd M}$.  Since $\Gamma_{\bd M}$ maps to the local orientation class
at $x$ the proof is complete.
\end{proof}

\subsection{Lefschetz duality}
\label{Lef}

\begin{theorem}[Lefschetz Duality]
Let $F$ be a field, and let $X$ be an $n$-dimensional compact $F$-oriented
$\bd$-stratified pseudomanifold. Suppose that
$\bar p +\bar q=\bar t$. 
Then the 
cap product with $\Gamma_X$ gives isomorphisms
\[
I_{\bar p}H^i(X,\bd X;F)\to 
I^{\bar q}H_{n-i}(X;F)
\]
and
\[
I_{\bar p}H^i(X;F)\to 
I^{\bar q}H_{n-i}(X,\bd X;F)
\]
\end{theorem}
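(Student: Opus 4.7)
The plan is to prove both isomorphisms simultaneously via the five lemma applied to a ladder formed by the long exact cohomology sequence of the pair $(X,\bd X)$ in perversity $\bar p$ on top and the long exact homology sequence in perversity $\bar q$ on the bottom, with vertical maps given by cap product with the appropriate fundamental class. The key inputs are Theorem \ref{T: duality} applied once to the open, $F$-oriented stratified pseudomanifold $X-\bd X$ (for the first isomorphism directly) and once to the closed $(n-1)$-dimensional stratified pseudomanifold $\bd X$ with its induced orientation (for the boundary columns of the ladder).

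First I would establish the first isomorphism
\[
\cdot\smallfrown\Gamma_X:\, I_{\bar p}H^i(X,\bd X;F)\to I^{\bar q}H_{n-i}(X;F)
\]
directly from Poincar\'e duality on the interior. Fix an open collar $N\cong\bd X\times[0,1)$ and set $N_r\cong\bd X\times[0,r)$ for $0<r<1$. By excision and the collar deformation of Appendix \ref{A: sphe},
\[
I_{\bar p}H^i(X,\bd X;F)\xleftarrow{\cong}I_{\bar p}H^i(X,N_r;F)\xrightarrow{\cong}I_{\bar p}H^i(X-\bd X,\,N_r-\bd X;F),
\]
and since the compacta $X-N_r$ are cofinal in the compacts of $X-\bd X$, passage to the direct limit in $r$ identifies $I_{\bar p}H^i(X,\bd X;F)$ with $I_{\bar p}H^i_c(X-\bd X;F)$. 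Sliding inward along the collar realizes $X-N_r$ as a stratified deformation retract of $X$, so $I^{\bar q}H_{n-i}(X;F)\cong I^{\bar q}H_{n-i}(X-\bd X;F)$. Composing with the Poincar\'e duality isomorphism $\ms D$ of Theorem \ref{T: duality} for $X-\bd X$ produces an isomorphism; that it agrees with $\cdot\smallfrown\Gamma_X$ (up to the $(-1)^{in}$ sign built into $\ms D$) follows because Proposition \ref{x5} realizes $\Gamma_X$ as the image of the compact-support fundamental class $\Gamma_{X-N_r}$ under precisely these identifications, so Proposition \ref{P: cap inclusion} intertwines the two cap products.

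Next I would assemble the ladder in which the terms $I_{\bar p}H^*(\bd X;F)$ and $I^{\bar q}H_*(\bd X;F)$ are joined by $\cdot\smallfrown\Gamma_{\bd X}$ and the terms $I_{\bar p}H^*(X;F)$, $I_{\bar p}H^*(X,\bd X;F)$ are joined by $\cdot\smallfrown\Gamma_X$. Degrees match because $\dim\bd X=n-1$ and the induced complementary perversities on $\bd X$ still satisfy the dual relation. By Theorem \ref{T: duality} applied to $\bd X$, the $\cdot\smallfrown\Gamma_{\bd X}$ maps are isomorphisms; by the previous paragraph, so is $\cdot\smallfrown\Gamma_X$ on $I_{\bar p}H^i(X,\bd X;F)$. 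The squares not involving the connecting homomorphism commute up to sign by Proposition \ref{P: cap inclusion}, using that $\Gamma_X$ restricts correctly under the inclusions of pairs. The square involving the connecting homomorphisms $\delta$ and $\bd$ commutes up to sign by Proposition \ref{L: boundary}(2), using the crucial identity $\bd\Gamma_X=\Gamma_{\bd X}$ proved at the end of Subsection \ref{fund boundary}. The five lemma then forces $\cdot\smallfrown\Gamma_X:\,I_{\bar p}H^i(X;F)\to I^{\bar q}H_{n-i}(X,\bd X;F)$ to be an isomorphism, yielding the second statement.

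The main obstacle is the careful sign-tracking in the commutativity of the ladder. The boundary-square commutativity needs the chain-level identity $\bd\Gamma_X=\Gamma_{\bd X}$ to be combined with the sign convention in Proposition \ref{L: boundary} and the sign $(-1)^{|\alpha|n}$ in the definition of $\ms D$; the other squares require checking that under the excision and collar-retraction identifications used to realize $\Gamma_X$ from $\Gamma_{X-N_r}$, cap product with $\Gamma_X$ really does induce $\cdot\smallfrown\Gamma_{\bd X}$ on the boundary pieces. Once these bookkeeping verifications are made, invoking the five lemma is routine.
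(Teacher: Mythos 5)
Your proposal matches the paper's proof essentially step for step: the first isomorphism is obtained exactly as in the paper by excision and collar retraction to reduce to Theorem \ref{T: duality} on $X-\bd X$ (with the direct limit over shrinking collars), and the second is obtained from the five lemma applied to the long-exact-sequence ladder with commutativity supplied by Propositions \ref{P: cap inclusion} and \ref{L: boundary} together with the identity $\bd\Gamma_X=\Gamma_{\bd X}$ from Subsection \ref{fund boundary}. The only minor elaboration you add is the explicit invocation of Proposition \ref{x5} to track $\Gamma_X$ through the collar identifications, which the paper leaves implicit.
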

\begin{proof}
We follow the strategy in \cite{Ha}.
For the first isomorphism,
let $N$ be an open collar of $\bd X$. Consider the following commutative diagram
\begin{diagram}
I_{\bar p}H^i(X-\bd X,N-\bd X;F)&\lTo^{\cong} &I_{\bar p}H^i(X,N;F)\\
\dTo^{(-1)^{in}\cdot\smallfrown \Gamma_{X-N}}&&\dTo^{(-1)^{in}\cdot\smallfrown \Gamma_X}\\
I^{\bar q}H_{n-i}(X-\bd X;F)&\rTo^{\cong} &I^{\bar q}H_{n-i}(X;F).
\end{diagram}
The top map is an isomorphism by excision and
the bottom by stratified homotopy equivalence. If we take the direct limit of 
the diagram as $N$ shrinks to 
$\bd X$, then $\dlim I_{\bar p}H^i(X,N;F)\cong I_{\bar p}H^i(X,\bd X;F)$ (in 
fact, all maps in the directed system obtained by retracting the collar are 
isomorphisms), while $\dlim I_{\bar p}H^i(X-\bd X,N-\bd X;F)\cong I_{\bar p}H^i_c(X-\bd X;F)$. So by Theorem \ref{T: duality}, the left hand map becomes 
an isomorphism in the limit. It follows therefore that the right hand map also 
becomes an isomorphism in the limit, as required.

For the second isomorphism, we use the diagram (with $F$-coefficients tacit)
\[
\xymatrix{
I_{\bar p}H^{i-1}\partial X
\ar[r]
\ar[d]_{\smallfrown \Gamma_{\partial X}}
&
I_{\bar p}H^i(X,\partial X)
\ar[r]
\ar[d]_{\smallfrown \Gamma_X}
&
I_{\bar p}H^i X
\ar[r]
\ar[d]_{\smallfrown \Gamma_X}
&
I_{\bar p}H^i\partial X
\ar[r]
\ar[d]_{\smallfrown \Gamma_{\partial X}}
&
I_{\bar p}H^{i+1}(X,\partial X)
\ar[d]_{\smallfrown \Gamma_X}
\\
I^{\bar q}H_{n-i}\partial X
\ar[r]
&
I^{\bar q}H_{n-i}X
\ar[r]
&
I^{\bar q}H_{n-i}(X,\partial X)
\ar[r]
&
I^{\bar q}H_{n-i-1}\partial X
\ar[r]
&
I^{\bar q}H_{n-i-1}X
}
\]
The diagram commutes up to sign by Propositions 
\ref{P: cap inclusion} and \ref{L: boundary}.  The result now follows from the
five lemma.
\end{proof}

\appendix

\section{Stratified maps, homotopy, and homotopy equivalence}\label{A: sphe}

The definition of ``stratum preserving homotopy equivalence'' given in
\cite{GBF3, Q1} needs to be modified a little in the context of general
perversities.  In this appendix we give the necessary details.

Let $X$ and $Y$ be $\bd$-stratified pseudomanifolds, and assume that we are
given perversities $\bar p,\bar q$ on $X$ and $Y$ respectively.

\begin{definition}\label{D: maps}
We will say that a map
$f:X\to Y$ 
is \emph{stratified with respect to $\bar p,\bar q$} if 
\begin{enumerate}
\item \label{I: maps} the image of each stratum of $X$ is contained in a single stratum of $Y$ of the same codimension, i.e. if $Z'\subset Y$ is a stratum of codimension $k$, then $f^{-1}(Z')$ is a union of strata of $X$ of codimension $k$,
\item if the stratum $Z\subset X$ maps to the stratum $Z'\subset Y$, then $\bar p(Z)\leq \bar q(Z')$. 
\end{enumerate}
\end{definition}

Note that if $f:X\to Y$ is an inclusion of an open subset, then $f$ is always 
stratified with respect to any perversity $\bar q$ on $Y$ and its induced 
restriction to $X$ (i.e. the perversity on $X$ whose value on $Z$ is  defined 
to be $\bar q(Z')$ if $Z\subset Z'$).  

An easy argument from the definitions shows that if $f:X\to Y$ is stratified and  $\mc G$ is a coefficient system on $Y-Y^{\dim(Y)-1}$, then $f_\#:I^{\bar p}C_*(X;f^*\mc G)\to I^{\bar q}C_*(Y;\mc G)$ is well-defined and induces a map of intersection homology groups $f_*:I^{\bar p}H_*(X;f^*\mc G)\to I^{\bar q}H_*(Y;\mc G)$.

Now stratify 
$X\times I$ by letting the strata 
have the form $Z\times I$, where $Z$ is a stratum of $X$. This 
stratification induces a natural bijection $Z \leftrightarrow Z\times I$ 
between the singular strata of $X$ and those of $X\times I$ and thus a 
natural bijection of perversities such that a perversity of $X$ corresponds 
to a perversity of $X\times I$ if the two take the same value on 
corresponding singular strata. In this case we will abuse notation and use 
the same symbol for both 
perversities.

We call $F: X\times I\to Y$ a \emph{stratified homotopy (with respect to 
$\bar p, \bar q$)}  if $F$ is a stratified map (with respect to $\bar p, \bar q$). In particular, the image under $F$ of each stratum $Z\times I\subset 
X\times I$ is contained in a single stratum of $Y$ (again compare 
\cite{GBF3, Q1}). If $F:X\times I\to Y$ is a stratified homotopy, then $f=F(\cdot,0)$ and 
$g=F(\cdot,1)$ are stratified maps $X\to Y$ and $F$ induces a chain homotopy 
between the induced maps of intersection chains $f_\#$ and $g_\#$. The proof 
of this fact follows by the usual prism construction (see e.g. \cite{Ha}). 
One checks that the necessary chains are all allowable as in the proof of 
Proposition 2.1 of \cite{GBF3}, with some obvious changes necessary to 
account for the general perversities.

We call $\bd$-stratified pseudomanifolds $X,Y$ \emph{stratified homotopy 
equivalent} if there is a homotopy equivalence $f:X\to Y$ with homotopy inverse $g:Y\to X$ such that 
$f$, $g$, and the respective homotopies  from $fg$ to $\text{id}_Y$ and from $gf$ to $\text{id}_X$ all satisfy condition \eqref{I: maps} of Definition \ref{D: maps}. 
 The maps $f$ and $g$ are then deemed \emph{stratified homotopy 
equivalences}. In this case, there must be a bijection between the strata of 
$X$ and the strata of $Y$, and thus a bijection between perversities on $X$ and perversities 
on $Y$. We often abuse notation and use a common symbol for 
the corresponding perversities. With respect to such corresponding 
perversities, $f$ and $g$ will be stratified maps, and the homotopies from $fg$ to $\text{id}_Y$ and from $gf$ to $\text{id}_X$ will be stratified homotopies. 

Thus if $f: X \to Y$ is a stratified homotopy equivalence, it follows that 
$I^{\bar p}C_*(X;f^*\mc G)$ is chain homotopy equivalent to 
$I^{\bar p}C_*(Y;\mc G)$ and thus  $I^{\bar p}H_*(X;f^*\mc G)\cong I^{\bar p}H_*(Y;\mc G)$; see \cite[Lemma 2.4]{GBF10} and \cite[Section 2]{GBF3}. In particular, any inclusion $X\times \{t\}\into X\times I$, 
where $I$ is unfiltered and $X\times I$ is given the product filtration, 
induces $I^{\bar p}H_*\left(X\times \{t\};\mc G|_{X\times \{t\}}\right)\cong 
I^{\bar p}H_*(X\times I;\mc G)$.

\section{Proofs of Theorems \ref{j1} and \ref{P: relative kunneth}}
\label{Appendix A}

In this appendix, we provide some technical proofs 
concerning the intersection homology K\"unneth theorem of \cite{GBF20}. The notation is taken from \cite{GBF20}; we refer the reader there for discussion of the sheaves involved.

We first prove the following proposition, which implies Theorem \ref{j1}.

\begin{proposition}
\label{l9}
Let $F$ be a field. Then the K\"unneth isomorphism of \cite{GBF20} is induced (up to sign) by the chain level cross product.
\end{proposition}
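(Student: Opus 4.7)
The plan is to show that the K\"unneth isomorphism constructed in \cite{GBF20} is represented, up to sign, by the chain-level cross product. Since the cross product is already known to define a well-defined natural chain map
\[
I^{\bar p}C_*(X;F)\otimes I^{\bar q}C_*(Y;F) \to I^{Q_{\bar p,\bar q}}C_*(X\times Y;F)
\]
whenever $Q_{\bar p,\bar q}(Z\times S)\geq \bar p(Z)+\bar q(S)$ (as recorded in \cite[page 382]{GBF20}), this chain map induces a homology-level map that is a candidate for the K\"unneth isomorphism. The task is to verify that this candidate agrees with the isomorphism produced by the abstract construction in \cite{GBF20}.

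First I would recall the construction of the K\"unneth isomorphism from \cite{GBF20}. That construction proceeds through a zigzag of natural quasi-isomorphisms at the level of chain complexes (and associated sheaf complexes), each arising from sheafification, restriction to open subsets, and compatibility with cones and distinguished neighborhoods. For each intermediate quasi-isomorphism in that zigzag, I would assemble a commutative square whose other edge is the chain-level cross product, using only the standard naturality properties of the cross product with respect to stratified maps, open inclusions, and sheafification. Stringing these squares together produces a homology-level commutative diagram exhibiting the K\"unneth isomorphism as the cross-product-induced map, up to a possible overall sign.

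The main obstacle I expect is the bookkeeping of signs. The present paper uses the Koszul convention almost everywhere (see Remark \ref{R: coboundary sign}), while \cite{GBF20} makes its own sign choices, and the cross product, together with the totalization of the tensor product of complexes, introduces additional signs $(-1)^{|x||y|}$ at various stages. The proof must track these carefully to show that any accumulated sign discrepancy is a single overall sign depending only on degree, which is exactly what is allowed by the ``up to sign'' qualifier in the statement.

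Finally, once this identification is established, the relative K\"unneth theorem of Theorem \ref{P: relative kunneth} should follow by applying the same strategy to the relative chain complexes $I^{\bar p}C_*(X,A;F)$ and $I^{\bar q}C_*(Y,B;F)$: excision reduces the computation to the case handled above, and the naturality of the cross product under inclusions of open subsets supplies the needed compatibility with the quotient maps.
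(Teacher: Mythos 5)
Your high-level plan---to exhibit a commutative diagram whose top edge is the chain-level cross product and whose other edges relate it to the K\"unneth isomorphism of \cite{GBF20}---is the right shape of argument, and the paper does proceed in this spirit. However, there is a genuine gap in how you characterize what that argument must accomplish, and it sits exactly at the crux of the proof.

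The K\"unneth isomorphism of \cite{GBF20} is not a zigzag of natural chain-level quasi-isomorphisms; it is produced sheaf-theoretically, by identifying the sheafification of the presheaf of tensor products of intersection chains with the Deligne-type sheaf $\mc I^{Q_{\bar p,\bar q}}\mc S^*$ on $X\times Y$, passing through $c$-acyclic resolutions. Because the identification comes from an abstract characterization of quasi-isomorphism classes rather than from an explicit chain map, it is \emph{not} enough to check naturality of the cross product square by square along a chain-level zigzag (there is no such zigzag). The real work is to show that the known abstract isomorphism $H_{n+m-*}(I^{\bar p}C_*(X)\otimes I^{\bar q}C_*(Y))\cong \H^*(\Gamma_c(X\times Y;\mc I^*))$ (\cite[Corollary 4.2]{GBF20}) is the \emph{same} map as the composite of the concrete sheafification and resolution maps that sits on the left side of the comparison diagram. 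The paper does this by an auxiliary diagram involving injective resolutions $\mc K^*_X$, $\mc K^*_Y$, the $c$-fineness of $\pi_X^*(\mc K^*_X)\otimes \pi_Y^*(\mc K^*_Y)$, and the fact that $\mc I^{Q_{\bar p,\bar q}}\mc S^*$ is homotopically fine and generated by a monopresheaf conjunctive for coverings. Without addressing this compatibility of the abstract isomorphism with the concrete sheafification/resolution maps, the argument does not close: you would have two parallel isomorphisms with no reason to coincide. Your emphasis on sign bookkeeping is a secondary matter (the paper dispatches it in a parenthetical), and the concluding paragraph on the relative K\"unneth theorem belongs to a different statement in the paper and would not be part of this proof.
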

\begin{proof}
Consider the following diagram (all coefficients are in $F$)
\begin{diagram}[LaTeXeqno]\label{D: crossproduct}
H_{n+m-*}(I^{\bar p}C_*(X)\otimes I^{\bar q}C_*(Y))&\rTo^\times & H_{n+m-*}(I^{Q_{\bar p,\bar q}}(X\times Y))\\
\dTo&&\dTo^\cong\\
H^*(\Gamma_c(X\times Y;\mc R^*))  &\rTo& H^*(\Gamma_c(X\times Y; \mc I^{Q_{\bar p,\bar q}} \mc S^*))\\
\dTo&&\dTo^{\cong}\\
H^*(\Gamma_c(X\times Y;\mc I^*))&\rTo^{\cong} &H^*(\Gamma_c(X\times Y;\mc J^*)). 
\end{diagram}
The sheaf $\mc R^*$ is the sheaf defined in \cite{GBF20}; it is really just the sheaf $\pi_X^*(\mc I^{\bar p}\mc S^*_X)\otimes\pi_Y^*(\mc I^{\bar q}\mc S^*_Y)$. The top map is the chain cross product, which is allowable by \cite{GBF20}.
The top vertical maps are induced by sheafification. The bottom vertical maps
are induced by taking $c$-acyclic resolutions; therefore the diagram commutes (up to possible signs arising from the degree shifts in the upper vertical maps).
The maps on the right are isomorphisms by the properties of the sheaf $\mc
I^{Q_{\bar p,\bar q}} \mc S^*$, which is homotopically fine and generated
by a monopresheaf that is conjunctive for coverings\footnotemark. The bottom isomorphism is
the K\"unneth isomorphism of \cite{GBF20}. We want to show that $\times$ is an isomorphism. It suffices to  show that the composition on the left of the diagram is an isomorphism. 

\footnotetext{Note that being a monopresheaf that is conjunctive for coverings is not quite the same thing as being a sheaf, as being conjunctive for coverings is a weaker condition than simply being conjunctive. More precisely, a presheaf $P$ is \emph{conjunctive} if for any collection of open sets $\{U_a\}$ and any collection of sections $s_a\in P(U_a)$ such that the  sections agree on the overlaps, there is a section  $s\in P(\cup U_a)$ such that $s$ restricts to $s_a$ in $U_a$. A presheaf on $X$ is \emph{conjunctive for coverings} if this property holds when $\cup U_a=X$. The presheaf of singular chains $U\to S_*(X,X-\bar U)$ on $X$ is an example of a presheaf that is conjunctive for coverings, but not conjunctive; see \cite[Exercise I.12]{BR}. However, the condition of being conjunctive for coverings is sufficient for many purposes. For example, if $P$ is a monopresheaf on $X$ that is conjunctive for coverings, $\ms P$ is its sheafification, and $\Phi$ is a paracompactifying family of supports, then by \cite[Theorem I.6.2]{BR}, $P_\Phi(X)\cong \Gamma(X;\mc P)$. This accounts for the upper right vertical isomorphism in the diagram as the global presheaf section with compact supports of the conjunctive-for-coverings monopresheaf generating $\mc I^{Q_{\bar p,\bar q}} \mc S^*$ is $I^{Q_{\bar p,\bar q}}(X\times Y)$. 

We also remind the reader here that the condition of a sheaf complex $\ms S^*$  being a homotopically fine is sufficient to guarantee that $\H_\Phi^*(X;\ms S^*)\cong H^*(\Gamma_\Phi(X;\ms S^*))$. This accounts for the lower right vertical isomorphism in the diagram. See \cite{GBF20, GBF10, BR} for more details.  
}

 In fact, we know abstractly that 
$H_{n+m-*}(I^{\bar p}C_*(X)\otimes I^{\bar q}C_*(Y))\cong \H^*(\Gamma_c(X\times Y;\mc I^*))$ by \cite[Corollary 4.2]{GBF20}. But we need slightly more; we must show that the isomorphism is consistent with the left hand composition of the diagram here. 

Let $\mc K^*_X$ and $\mc K^*_Y$ be injective resolutions of $\mc I^{\bar p}\mc S^*_X$ and $\mc I^{\bar q}\mc S^*_Y$, respectively. Then we have a diagram
\begin{diagram}
H_{n+m-*}(I^{\bar p}C_*(X)\otimes I^{\bar q}C_*(Y))&\rEquals&H_{n+m-*}(I^{\bar p}C_*(X)\otimes I^{\bar q}C_*(Y))\\
\dTo^\cong &&\dTo_\cong\\
H^*(\Gamma_c(X;\mc I^{\bar p}\mc S^*_X)\otimes \Gamma_c(Y;\mc I^{\bar q}\mc S^*_Y))&\rTo^\cong &H^*(\Gamma_c(X;\mc K^*_X)\otimes \Gamma_c(Y;\mc K^*_Y))\\
\dTo&&\dTo_\cong \\
H^*(\Gamma_c(X\times Y;\pi_X^*(\mc I^{\bar p}\mc S^*)\otimes\pi_Y^*(\mc I^{\bar q}\mc S^*)))&\rTo&H^*(\Gamma_c(X\times Y;\pi_X^*(\mc K^*_X)\otimes\pi_Y^*(\mc K^*_Y))).\\
\end{diagram}
The top left vertical map is induced by sheafification and is an isomorphism because $\mc I^{\bar p}\mc S^*_X$ and $\mc I^{\bar q}\mc S^*_Y$ are induced by appropriate monopresheaves that are conjunctive for covers and whose respective global presheaf sections with compact supports are $I^{\bar p}C_*(X)$ and $I^{\bar q}C_*(Y)$. The middle and bottom horizontal maps are induced by the injective resolutions
$\mc I^{\bar p}\mc S^*_X\to \mc K^*_X$ and $\mc I^{\bar p}\mc S^*_X\to \mc K^*_Y$.
The middle horizontal map  is an isomorphism because $\mc I^{\bar p}\mc S^*_X$ and $\mc I^{\bar q}\mc S^*_Y$ are homotopically fine. We fill in the top right vertical arrow so that the top square commutes by definition. The bottom vertical maps are defined and the bottom square commutes in the evident way (given a germ over $x\in X$ and a germ over $y\in Y$, this determines a germ of the tensor product of stalks over $(x,y)$; see \cite[Section II.15]{BR}). The composition of maps on the left is equivalent to the map $H_{n+m-*}(I^{\bar p}C_*(X)\otimes I^{\bar q}C_*(Y))\to H^*(\Gamma_c(X\times Y;\mc R^*))$ of Diagram \eqref{D: crossproduct}. Since $\mc K^*_X$ and $\mc K^*_Y$ are injective, and hence $c$-fine, and since $X$ and $Y$ are locally compact and Hausdorff, $\pi_X^*(\mc K^*_X)\otimes\pi_Y^*(\mc K^*_Y)$ is $c$-fine by \cite[Exercise II.14 and page 494, fact (s)]{BR}. So the bottom map of the diagram is in fact induced by a c-fine resolution  $\pi_X^*(\mc I^{\bar p}\mc S^*_X)\otimes\pi_Y^*(\mc I^{\bar q}\mc S^*_Y)\to \pi_X^*(\mc K^*_X)\otimes\pi_Y^*(\mc K^*_Y)$. Therefore we can let the bottom horizontal map here play the role of  the bottom left map of Diagram   \eqref{D: crossproduct}. The bottom right vertical map is an isomorphism by \cite[Proposition 15.1]{BR}. 

Note, we cannot conclude that either of the maps in the diagram not labeled as such are isomorphisms, but nonetheless, this is enough to show  the composition along the left side of Diagram \eqref{D: crossproduct} is the desired isomorphism. 
\end{proof}

Next we prove the relative K\"unneth theorem (Proposition \ref{P: relative
kunneth}).  We restate it for the convenience of the reader.

\medskip

\noindent
{\bf Theorem}\ 
{\em
Let $X$ and $Y$ be stratified pseudomanifolds with open subsets $A\subset X, B\subset Y$.
The cross
product induces an isomorphism
\[
I^{\bar p}H_*(X,A;F)\otimes I^{\bar q}H_*(Y,B;F)\to I^{Q_{\bar p,\bar q}}H_*(X\times Y,(A\times Y)\cup
(X\times B);F).
\]
}

\begin{proof}
Let $Q$ denote $Q_{\bar p,\bar q}$.  Consider the following diagram (where 
we leave the $F$ coefficients tacit).
\begin{diagram}
&\rTo & I^{\bar p}H_*(A) \otimes I^{\bar q}H_*(Y)&\rTo & I^{\bar p}H_*(X) \otimes I^{\bar q}H_*(Y) &\rTo 
& I^{\bar p}H_*(X,A) \otimes I^{\bar q}H_*(Y)&\rTo&\\
&&\dTo_\times &&\dTo_\times &&\dTo_\times\\
&\rTo & I^{Q}H_*(A\times Y)&\rTo & I^{Q}H_*(X\times Y) &\rTo & I^{Q}H_*(X\times Y,A\times Y)&\rTo&.
\end{diagram}
Both rows are exact; the top row is exact because we work over a field (so all
modules are flat). The vertical maps are all induced by the chain cross
product, and the diagram commutes up to sign (as can be seen by working with
representative chains). So we have 
$I^{\bar p}H_*(X,A) \otimes I^{\bar q}H_*(Y)\cong I^{Q}H_*(X\times Y,A\times Y)$ by the five lemma.

Similarly, we now have the diagram
{\footnotesize
\begin{diagram}
&\rTo & I^{\bar p}H_*(X,A) \otimes I^{\bar q}H_*(B)&\rTo & I^{\bar p}H_*(X,A) \otimes I^{\bar q}H_*(Y) &\rTo & I^{\bar p}H_*(X,A) \otimes I^{\bar q}H_*(Y,B)&\rTo&\\
&&\dTo_\times &&\dTo_\times &&\dTo_\times\\
&\rTo & I^{Q}H_*(X\times B,A\times B)&\rTo & I^{Q}H_*(X\times Y, A\times Y) 
&\rTo & I^{Q}H_*(X\times Y,(A\times Y)\cup(X\times B))&\rTo&.
\end{diagram}}
The top row is again exact by flatness. The bottom row is the long exact sequence associated to the short exact sequence 
{\footnotesize
\begin{diagram}
0&\rTo & I^{Q}C_*(X\times B,A\times B)&\rTo & I^{Q}C_*(X\times Y, A\times Y) 
&\rTo & I^{Q_{\bar p,\bar q}}C_*(X\times Y,(A\times Y)\cup(X\times B))&\rTo 
&0,
\end{diagram}}
which exists by some basic homological algebra.\footnote{This essentially 
comes from the intersection chain short exact Mayer-Vietoris sequence for the 
pairs $(X\times Y, A\times Y)$ and $(X\times B,X\times B)$, since $B\cap 
Y=B$, $Y\cup B=Y$, $A\times Y\cap X\times B=A\times B$.} Again, commutativity 
follows from chain arguments, and the proposition now follows from the five 
lemma.
\end{proof}

\section{Invariance of general perversity intersection homology under normalization}\label{A: normal}

We provide here a theorem stating that intersection homology is preserved under normalization. For general background on normalizations see \cite{Pa03, GM1}.

\begin{lemma}\label{L: normal}
Let $X$ be a stratified pseudomanifold, and let $\pi:\hat X\to X$ be its 
normalization. Then $\pi: I^{\bar p}H_*(\hat X;R)\to I^{\bar p}H_*(X;R)$ is 
an isomorphism.
\end{lemma}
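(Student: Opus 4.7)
The plan is to prove the statement by induction on the depth of $X$. The base case is depth zero, in which $X$ is a manifold, already normal, so $\pi$ is the identity and there is nothing to prove. For the inductive step, I will assume the result holds for all stratified pseudomanifolds of depth strictly less than that of $X$, and divide the argument into a local computation on distinguished neighborhoods followed by a Mayer--Vietoris / direct limit globalization.

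For the local part, let $U \cong \mathbb{R}^i \times cL$ be a distinguished neighborhood in $X$, where $L$ has depth strictly less than $X$. By the defining properties of the normalization (see \cite{Pa03}), $\pi^{-1}(U)$ is a disjoint union $\bigsqcup_\alpha \mathbb{R}^i \times c\hat{L}_\alpha$, indexed by the connected components $L_\alpha$ of $L$, where $\hat{L}_\alpha \to L_\alpha$ is the normalization of $L_\alpha$. Applying the K\"unneth theorem (Theorem \ref{j1}, with perversity $\bar 0$ on the unfiltered $\mathbb{R}^i$ factor) and the cone formula (Proposition \ref{P: cone}) to both sides, the map $\pi_*$ on $I^{\bar p}H_*$ reduces to the map
\[
\bigoplus_\alpha I^{\bar p}H_*(\hat{L}_\alpha;R) \longrightarrow \bigoplus_\alpha I^{\bar p}H_*(L_\alpha;R) \cong I^{\bar p}H_*(L;R),
\]
which is an isomorphism by the inductive hypothesis applied to each $L_\alpha$ (each of which has depth $\leq \mathrm{depth}(L) < \mathrm{depth}(X)$). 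One must check that the cone-point perversity values match up correctly on each side --- but the strata of $\hat X$ are in natural bijection with strata of $X$ via $\pi$, so the perversities and the thresholds $n-1-\bar p(\{v\})$ agree on both sides.

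For the global part, I will use the standard Mayer--Vietoris / Zorn's lemma strategy in the spirit of the proof of Theorem \ref{T: duality}. Since $\pi$ is proper, preimages of open sets are open, and naturality of the Mayer--Vietoris sequence gives a commuting ladder comparing the sequence for $(\pi^{-1}(U), \pi^{-1}(V))$ in $\hat X$ with the sequence for $(U,V)$ in $X$. Thus if $\pi_*$ is an isomorphism over $U$, $V$, and $U \cap V$, the five lemma implies it is an isomorphism over $U \cup V$; similarly, naturality of direct limits handles directed unions. Starting from $X - X^{n-1}$ (over which $\pi$ is a homeomorphism, hence obviously induces an isomorphism) and adjoining distinguished neighborhoods covering $X^{n-1}$ one at a time --- using the local case established above together with excision to identify the intersection homology of a distinguished neighborhood with that of its preimage --- Zorn's lemma yields the result on all of $X$.

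The main obstacle will be the local step: one must verify carefully that under normalization a distinguished neighborhood at a singular point pulls back to a disjoint union of distinguished neighborhoods whose links are precisely the normalizations of the connected components of the original link, and that the perversity values assigned to the corresponding cone-point strata agree. Once this bookkeeping is in place, the K\"unneth and cone formulas do essentially all the work, and the Mayer--Vietoris globalization is then formally identical to the one used in Section \ref{j4}.
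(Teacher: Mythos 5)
Your proposal is correct in outline but takes a genuinely different and considerably heavier route than the paper. The paper's proof is essentially a two-sentence trick: by \cite[Lemma 2.4]{GBF23} one may assume $\bar p(Z)\leq\codim(Z)-1$ for every singular stratum $Z$; under this bound every simplex of an allowable chain in $X$ sends the interior of $\Delta^i$ into $X-X^{n-1}$, where $\pi$ is a homeomorphism, so it admits a unique lift to $\hat X$ (extending to the boundary by continuity). This produces an explicit chain-level inverse $s:I^{\bar p}C_*(X;R)\to I^{\bar p}C_*(\hat X;R)$; the only thing to check is that $s$ is a chain map, which holds because boundary faces mapped into $X^{n-1}$ are discarded on both sides. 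This is a chain-level isomorphism, not merely an isomorphism on homology, and it requires neither the K\"unneth theorem, nor the cone formula, nor any local-to-global argument.

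Your induction-on-depth scheme should also succeed, and it has the minor virtue of working directly for all perversities without invoking the reduction from \cite[Lemma 2.4]{GBF23}, but it is far more machinery than is needed here. Two points to tighten if you were to carry it through: first, the strata of $\hat X$ are \emph{not} in bijection with the strata of $X$ (the preimage of a single cone stratum typically consists of several strata, one for each component of the link); what you actually need is that the perversity on $\hat X$ is the pullback $\bar p\circ\pi$, which does ensure the thresholds $n-1-\bar p(\{v_\alpha\})$ agree. Second, the Zorn's lemma/Mayer--Vietoris globalization is stated rather loosely; as in the proof of Theorem \ref{T: duality}, when you pass from the maximal $U$ to $U\cup N$ you must also handle $U\cap N$, which generally still has the full depth of $N$, so a further nested Mayer--Vietoris step (inside the distinguished neighborhood, separating off the bottom stratum) is required rather than a direct appeal to the ``local case.'' Neither of these is a fatal flaw, but both require care that the paper's lifting argument avoids entirely.
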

\begin{proof}
This is a standard fact for intersection homology with Goresky-MacPherson 
perversities and no codimension one strata. We briefly revisit the proof to
show that it remains true in the more general setting. It is elementary to
observe that $\pi$ is well-defined as a homomorphism of intersection chains,
and hence of intersection homology groups. The normalization map is proper
(since all stratified pseudomanifolds have compact links by definition), so we can consider intersection homology either with closed or with compact supports.

By \cite[Lemma 2.4]{GBF23}, it is sufficient to consider perversities such that $\bar p(Z)\leq \codim(Z)-1$ for each singular stratum $Z$, for otherwise we get nothing new. This fact allows us mostly to reduce the proof to the usual one: if $\bar p(Z)\leq \codim(Z)-1$ for each singular $Z$, each simplex of each allowable chain $\xi$ of $I^{\bar p}C_i(X;R)$ intersects $X^{n-1}$ in at most the image of the the $i-1$ skeleton of the model simplex $\Delta^i$. So for any such singular simplex $\sigma$ in $\xi$, $\sigma$ maps the interior of $\Delta^i$ into $X-X^{n-1}$. But this mapping of the interior can be lifted to $\hat X$, and continuity ensures that we can then lift all of $\sigma$ to $\hat X$. This process generates a homomorphism $s: I^{\bar p}C_*(X;R)\to 
I^{\bar p}C_*(\hat X;R)$, and it is clear that $s$ is an inverse of $\pi$. It only remains to check that $s$ is a chain map. This is not difficult to see, recalling that  any boundary simplices with support entirely in $X^{n-1}$ are set automatically to $0$. 
\end{proof}

\section{Comparison with the cup product of \cite{Ba10a}}
\label{l6}

In this appendix we verify the claim in Remark \ref{l5}.

First observe that for pairs $\bar p,\bar q$ satisfying the conditions in
Remark \ref{l5} we have $D\bar q\geq D\bar p+D\bar p$, so Definition \ref{l7}
gives a cup product map
\[
I_{\bar p}H^*(X;\mathbb Q)
\otimes
I_{\bar p}H^*(X;\mathbb Q)
\to
I_{\bar q}H^*(X;\mathbb Q)
\]
which we will show agrees up to sign with that constructed in 
\cite[Section 7]{Ba10a}. 

One of the ingredients in Banagl's construction is the ``Eilenberg-Zilber type
isomorphism''
\[
I^{\bar p}C_*(X;\mathbb Q)
\otimes
I^{\bar p}C_*(Y;\mathbb Q)
\to
I^{\bar p}C_*(X\times Y;\mathbb Q)
\]
((23) on page 175 of \cite{Ba10a}).\footnote{Note that there is an implicit
assumption in the construction of this map that $X$ and $Y$ are orientable, 
since the orientation sheaf mentioned on line $-12$ of \cite[page 163]{Ba10a} 
is identified on page 175 of \cite{Ba10a} with the constant sheaf 
$\mathbb{Q}_U$ (resp., $\mathbb{Q}_V$).
We leave it to the interested reader to work out the details in the
non-orientable case.
}
We will denote this map by $E$.
The criterion given at the end of \cite[Section 4.1]{GBF20} shows that, since
$\bar{p}(k)+\bar{p}(l)\leq \bar{p}(k+l)$, the cross product also
induces a map 
\[
\times: I^{\bar p}C_*(X;\mathbb Q)
\otimes
I^{\bar p}C_*(Y;\mathbb Q)
\to
I^{\bar p}C_*(X\times Y;\mathbb Q),
\]
and we claim that (up to sign) this is the same as $E$.  This follows from the
uniqueness result \cite[Proposition 2]{CGJ},
using the fact that both $E$ and $\times$ are induced by maps of sheaves 
\[
\pi_X^*\mc I^{\bar p}\mc S^*_X\otimes \pi_Y^*\mc I^{\bar p}\mc S^*_Y\to\mc
I^{\bar p}\mc S^*_{X\times Y}
\]
(see 
the proofs of \cite[Theorem 9.1]{Ba10a} and Proposition \ref{l9}) which agree
(up to sign) on $\pi^*_U\mathbb{Q}_U\otimes \pi^*_V\mathbb{Q}_V$.

Now consider the following diagram.
\begin{equation}
\label{l8}
\xymatrix{
I^{\bar q}C_*(X;\mathbb Q)
\ar[r]^-d
\ar[rd]^d
&
I^{\bar p}C_*(X\times X;\mathbb Q)
\ar[d]
&
I^{\bar p}C_*(X;\mathbb Q)
\otimes
I^{\bar p}C_*(X;\mathbb Q)
\ar[l]^-\cong_-E
\ar[ld]^\cong_\times
\\
&
I^{Q_{\bar p,\bar p}}C_*(X\times X;\mathbb Q)
&
}
\end{equation}
Here the two maps marked $d$ are induced by the diagonal; the horizontal $d$ 
is given by \cite[Proposition 7.1]{Ba10a} and the other $d$ is given by 
Proposition \ref{L: diag}.1.  The vertical map exists because of the inequality
$\bar{p}(k+l)\leq\bar{p}(k)+\bar{p}(l)+2$.  
The left-hand triangle in
diagram \eqref{l8} obviously commutes and we have just seen that the right-hand
triangle commutes up to sign.

The dual of the lower composite in diagram \eqref{l8} is
the cup product of Definition \ref{l7}, so it suffices to show that the dual of
the upper composite is the cup product of \cite[Section 7]{Ba10a}.  This in turn
is a straightforward consequence of the definition in \cite{Ba10a} and
Proposition IV.2.5 of \cite{EKMM}.

\providecommand{\bysame}{\leavevmode\hbox to3em{\hrulefill}\thinspace}
\providecommand{\MR}{\relax\ifhmode\unskip\space\fi MR }
\providecommand{\MRhref}[2]{%
  \href{http://www.ams.org/mathscinet-getitem?mr=#1}{#2}
}
\providecommand{\href}[2]{#2}

Several diagrams in this paper were typeset using the \TeX\, commutative
diagrams package by Paul Taylor. 

\end{document}